\newlength\bshft 
\def\fakebold#1{\ThisStyle{\ooalign{$\SavedStyle#1$\cr%
  \kern-\bshft$\SavedStyle#1$\cr%
  \kern\bshft$\SavedStyle#1$}}}
\newtheorem{thm}{Theorem}[section]
\newtheorem{cor}[thm]{Corollary}
\newtheorem{lem}[thm]{Lemma}
\newtheorem{conj}[thm]{Conjecture}
\newtheorem*{theorem*}{Theorem \ref{TorusLinksOnly2Torsion}}
\newtheorem*{theorem'}{Theorem \ref{nooddtorsion}}
\newtheorem*{theorem"}{Theorem \ref{GeneralTheorem2}}
\theoremstyle{definition}
\newtheorem{defn}[thm]{Definition}
\newtheorem{exa}[thm]{Example}
\newcommand\restr[2]{{
  \left.\kern-\nulldelimiterspace 
  #1 
  \vphantom{\big|} 
  \right|_{#2} 
  }}
\newcommand{\R}{\mathbb{R}}
\newcommand{\Z}{\mathbb{Z}}
\newcommand{\Q}{\mathbb{Q}}
\newcommand{\F}{\mathbb{F}}
\newcommand{\zp}{\mathbb{Z}_p}
\newcommand{\rk}{\textnormal{rk }}
\newcommand{\im}{\textnormal{im}}
\newcommand{\red}{\text{red} \hspace{.5mm}}
\newcommand{\BN}{\textnormal{BN}}
\newcolumntype{P}[1]{>{\centering\arraybackslash}p{#1}} 
\definecolor{forest}{rgb}{0.03, 0.47, 0.19}
\newcommand{\ACcom}[1]{{\textcolor{forest}{{\sf AC:} #1}}}
\newcommand{\AL}[1]{{\textcolor{purple}{{\sf AL:} #1}}}
\def\Ddots{\mathinner{\mkern1mu\raise\p@
\vbox{\kern7\p@\hbox{.}}\mkern2mu
\raise4\p@\hbox{.}\mkern2mu\raise7\p@\hbox{.}\mkern1mu}}
\newsavebox\pinlinesigmaoneinverse
\title{Torsion in thin regions of Khovanov homology}
\thanks{AC was supported in part by the project P31705 of the Austrian Science Fund. AL was supported in part by NSF grant DMS-1811344. RS partially supported by the Simons Foundation Collaboration Grant 318086 and NSF Grant DMS 1854705.}
\author{Alex Chandler}
\address{Faculty of Mathematics\\
University of Vienna\\
Vienna, Austria}
\email{alex.chandler@univie.ac.at}
\author{Adam M. Lowrance}
\address{Department of Mathematics\\
Vassar College\\
Poughkeepsie, NY} 
\email{adlowrance@vassar.edu}
\author{Radmila Sazdanovi\'{c}}
\address{Department of Mathematics\\
North Carolina State University\\
Raleigh, NC}
\email{rsazdanovic@math.ncsu.edu}
\author{Victor Summers}
\address{Division of Mathematics and Computer Science\\
University of South Carolina Upstate\\
Spartanburg, SC}
\email{vsummers@uscupstate.edu}
\begin{document}
\maketitle

\begin{abstract}
In the integral Khovanov homology of links, the presence of odd torsion is rare. Homologically thin links, that is links whose Khovanov homology is supported on two adjacent diagonals, are known to only contain $\Z_2$ torsion. In this paper, we prove a local version of this result. If the Khovanov homology of a link is supported on two adjacent diagonals over a range of homological gradings and the Khovanov homology satisfies some other mild restrictions, then the Khovanov homology of that link has only $\Z_2$ torsion over that range of homological gradings. These conditions are then shown to be met by an infinite family of $3$-braids, strictly containing all $3$-strand torus links, thus giving a partial answer to Sazdanovi\'{c} and Przytycki's conjecture that $3$-braids have only $\Z_2$ torsion in Khovanov homology. We use these computations and our main theorem to obtain the integral Khovanov homology for all links in this family.
\end{abstract}


\noindent\textbf{Mathematics Subject Classification:} 57K10, 57K18.

\section{Introduction}\label{intro}

In 1984, Jones discovered a powerful polynomial link invariant, now known as the Jones polynomial \cite{jones1997polynomial}. In 1999, Khovanov \cite{khovanov1999categorification} categorified the Jones polynomial $J(L)$ to a bigraded homology theory $H(L)$ called Khovanov homology. Khovanov homology categorifies the Jones polynomial in the sense that the Jones polynomial of a link can be recovered as the graded Euler characteristic of its Khovanov homology. 
Each $H^{i,j}(L)$ is a finitely generated abelian group, and the Jones polynomial gets a contribution only from the free part of $H(L)$. Thus torsion in Khovanov homology is a new phenomena in knot theory which does not appear in the theory of Jones polynomials. Throughout this paper we will use the term `$\Z_{p^r}$ torsion', $p$ a prime, to mean a direct summand of isomorphism class $\Z_{p^r}$ in the primary decomposition of the integral Khovanov homology of a link. 

Khovanov homology is equipped with two gradings: the homological grading $i$ and the polynomial grading $j$. A link is \textit{homologically thin} if its Khovanov homology is supported in bigradings $2i-j=s\pm 1$ for some integer $s$. Non-split alternating links and quasi-alternating links are homologically thin \cite{lee2008khovanov,manolescuozsvath2008quasi}. In \cite{shumakovitch2004torsion}, Shumakovitch showed that homologically thin links have only $\Z_{2^r}$ torsion in their Khovanov homology, and in \cite{shumakovitch2018torsion} he used a relationship between the Turner and Bockstein differentials on $\Z_2$-Khovanov homology to show in fact there is only $\Z_2$ torsion. In this paper, we prove a version of this result when the Khovanov homology of a link is thin over a restricted range of homological gradings.

Let $i_1,i_2\in\Z$ with $i_1<i_2$. We say that $H(L)$ is \textit{thin over $[i_1,i_2]$} if there is an $s\in\mathbb{Z}$ such that $H^{*,*}(L;\Z_p)$ is supported only in bigradings satisfying $2i-j=s\pm 1$ for all $i$ with $i_1\leq i \leq i_2$ and for all primes $p$. If $i$ is an integer with $i_1\leq i \leq i_2$, then we say $i\in[i_1,i_2]$; we similarly define $i\in(i_1,i_2], (i_1,i_2)$, or $[i_1,i_2)$.
\begin{theorem"}
Suppose that a link $L$ satisfies:
\begin{enumerate}
    \item $H(L)$ is thin over $[i_1,i_2]$ for integers $i_1$ and $i_2$ where $H^{[i_1,i_2]}(L)$ is supported in bigradings $(i,j)$ with $2i-j=s\pm 1$ for some $s\in\mathbb{Z}$,
    \item $\dim_{\Q}H^{i_1,*}(L;\Q)=\dim_{\Z_p} H^{i_1,*}(L;\Z_p)$ for each odd prime $p$,
    \item $H^{i_1,*}(L)$ is torsion-free, and
    \item $H^{i_1-1,j}(L)$ is trivial when $j\leq 2i_1-s -3.$
\end{enumerate}
Then all torsion in $H^{i,*}(L)$ is $\Z_2$ torsion for $i\in[i_1,i_2]$, that is, $H^{[i_1,i_2]}(L)\cong\Z^k\oplus \Z_2^\ell$ for some $k,\ell\geq 0$.
\end{theorem"}

We use Theorem \ref{GeneralTheorem2} to show that certain families of closed $3$-braids have only $\Z_2$ torsion in their Khovanov homology. Various techniques have been used to show that some other families of links only have $\Z_2$ torsion in their Khovanov homology or only have $\Z_2$ torsion in certain gradings. In \cite{helme2005torsion}, Helme-Guizon, Przytycki, and Rong established a connection between the Khovanov homology of a link and the chromatic graph homology of graphs associated to diagrams of the link. In \cite{lowrance2017chromatic}, Lowrance and Sazdanovi\'c used this connection to show that in a range of homological gradings, Khovanov homology contains only $\Z_2$ torsion. This result can now be seen as a corollary to Theorem \ref{GeneralTheorem2}.

 Przytycki and Sazdanovi\'c \cite{przytycki2012torsion} obtained explicit formulae for some torsion and proved that the Khovanov homology of semi-adequate links contains $\Z_2$ torsion if the corresponding Tait-type graph has a cycle of length at least 3. In the same paper the authors conjectured the following, connecting torsion in Khovanov homology to braid index: 
\begin{conj}[PS braid conjecture, 2012]
\label{psbraid}
\ \ 
\begin{enumerate}
\item The Khovanov homology of a closed 3-braid can have only $\Z_2$ torsion.
\item The Khovanov homology of a closed 4-braid cannot have $\Z_{p^r}$ torsion for $p \neq 2$.
\item The Khovanov homology of a closed 4-braid can have only $\Z_2$ and $\Z_4$ torsion.
\item The Khovanov homology of a closed $n$-braid cannot have $\Z_{p^r}$ torsion for $p>n$ ($p$ prime).
\item The Khovanov homology of a closed $n$-braid cannot have $\Z_{p^r}$ torsion for $p^r> n$.
\end{enumerate}
\end{conj}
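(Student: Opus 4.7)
The plan is to address Conjecture~\ref{psbraid} part by part, since different pieces demand different techniques. Parts~(1) and~(2) look accessible by extending the thin-over-a-range machinery of Theorem~\ref{GeneralTheorem2}, while parts~(3)--(5) appear to require a new categorical input that explicitly reflects the braid index.

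For part~(1), my first move would be to invoke Murasugi's normal form for 3-braids: every closed 3-braid is conjugate either to $\Delta^{2k}\om$ with $\om$ of the form $\si_1^{p_1}\si_2^{-q_1}\cdots\si_1^{p_s}\si_2^{-q_s}$ (all $p_i,q_i>0$), or to one of a short list of exceptional types ($\Delta^{2k}$, $\Delta^{2k}\si_1^p$, or $\Delta^{2k}\si_1\si_2^{-q}$). In each case I would try to verify hypotheses~(1)--(4) of Theorem~\ref{GeneralTheorem2} on a range $[i_1,i_2]$ that covers all but finitely many homological gradings of the closure. Establishing thinness over such a range is the crux: for the generic Murasugi form one can set up an induction on $\sum(p_i+q_i)$ using the unoriented skein long exact sequence at a single crossing, with base cases supplied by the infinite family already computed in the paper. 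The finitely many extremal gradings not covered by $[i_1,i_2]$ should then be handled directly via the Lee and Bar-Natan spectral sequences over $\F_p$, whose $E_\infty$ pages carry no odd torsion.

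For parts~(2)--(5), the input from Theorem~\ref{GeneralTheorem2} alone is insufficient because a generic closed $n$-braid with $n\geq 4$ is not homologically thin. A plausible route is to categorify the Burau or Lawrence--Krammer representation, or equivalently to pass to the Khovanov--Rozansky model built from Soergel bimodules, and then to construct a spectral sequence filtered by strand number whose pages are controlled by modules over $\F_p[S_n]$ or a Hecke-algebra analogue. Odd-prime torsion at $E_\infty$ would then force the existence of a module on which $p$ acts nontrivially, which by a rank bound would force $p \leq n$; sharpening this bound to $p^r\leq n$ would yield parts~(4) and~(5). Part~(3) would in addition require an argument ruling out $\Z_{p^r}$ with $p^r>4$ when $n=4$, presumably by bounding the length of the $p$-Bockstein-type differential on the proposed spectral sequence in the spirit of Shumakovitch's Turner--Bockstein comparison.

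The principal obstacle lies in parts~(3)--(5): no spectral sequence currently in the literature simultaneously detects braid index and constrains torsion primes at this level of precision, so constructing the filtration sketched above is essentially the whole problem, and it is unclear whether any of the known perturbations (Lee, Bar-Natan, Turner, $\mathfrak{sl}_n$) are sharp enough to distinguish $\Z_4$ from higher $2$-power torsion in the $4$-braid case. Even for part~(1), hypothesis~(3) of Theorem~\ref{GeneralTheorem2}---torsion-freeness at the boundary grading~$i_1$---is not automatic for every Murasugi normal form, so either a case-by-case verification or, preferably, a weakening of hypothesis~(3) in the theorem itself will constitute the main technical labor.
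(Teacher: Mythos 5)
The statement you are trying to prove is a \emph{conjecture} (due to Przytycki and Sazdanovi\'c), not a theorem of this paper: the paper offers no proof of it, and in fact records immediately after its statement that parts (2), (3) and (5) are \emph{false} (counterexamples in \cite{mukherjee2017search}) and that part (4) is also false (Mukherjee's counterexample \cite{Mukherjee4}). So the entire second half of your program --- categorifying Burau/Lawrence--Krammer, filtering by strand number, and bounding torsion primes by the braid index --- is aimed at statements that are known to fail; no such spectral sequence can exist, because for example closed $4$-braids with $\Z_{2^s}$ torsion for $s>2$ and closed $n$-braids with $\Z_{p^r}$ torsion for $p>n$ have been exhibited. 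Only part (1) remains open, and the paper's actual contribution (Theorem \ref{mainthm2}) is a partial result: it settles part (1) for closures of braids in Murasugi's classes $\Omega_0,\Omega_1,\Omega_2,\Omega_3$ by verifying the hypotheses of Theorem \ref{GeneralTheorem2} against explicit computations of $H(L;\Q)$, $H(L;\Z_p)$ and $H(L;\Z_2)$ for those families (Turner's and Benheddi's torus-link computations plus a long exact sequence for $\Delta^{2n+1}$).

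For part (1) itself, the concrete gap in your plan is not the one you identify. You flag hypothesis (3) of Theorem \ref{GeneralTheorem2} (torsion-freeness at the boundary grading $i_1$) as the main technical labor, but the paper's Section on $\Omega_4$, $\Omega_5$, $\Omega_6$ shows that the failure occurs already at hypothesis (1): for $\Delta^2\sigma_1^{-5}\in\Omega_4$, $\Delta^2\sigma_2\in\Omega_5$, and $\Delta^4\sigma_1^{-2}\sigma_2\sigma_1^{-1}\in\Omega_6$ the mod-$2$ Khovanov homology is supported on \emph{three} adjacent diagonals in the homological gradings where the relevant torsion sits, so there is no interval $[i_1,i_2]$ over which the link is thin and which covers the torsion. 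Your proposed induction on $\sum(p_i+q_i)$ via the skein long exact sequence therefore cannot establish the thinness you need for the generic Murasugi form; what is actually required (and what the authors defer to future work) is a strengthening of Theorem \ref{GeneralTheorem} that tolerates width three, e.g.\ by relating higher Bockstein differentials to higher Turner differentials as suggested by Shumakovitch. As written, your proposal proves none of the five parts.
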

\noindent Counterexamples to parts (2), (3) and (5) are given in \cite{mukherjee2017search}, and a counterexample to part (4) has recently been constructed by Mukherjee \cite{Mukherjee4} (see also \cite{sujoy2, sujoy}). However, part (1) remains open, and computations suggest that part (1) is indeed true. One goal of this (ongoing) project is to prove this.

\begin{figure}[ht]
\begin{center}
\begin{tikzpicture}
\node at (1,3){$\sigma_1 = \usebox\pinlinesigmaone$};
\node at (4.5,3){$\sigma_2 = \usebox\pinlinesigmatwo$};
\node at (6.45,3){$\dots$};
\node at (8.3,3){$\sigma_{s-1} = \usebox\pinlinesigmapminusone$};
\node at (.9,2){$\sigma_1^{-1} = \usebox\pinlinesigmaoneinverse$};
\node at (4.4,2){$\sigma_2^{-1} = \usebox\pinlinesigmatwoinverse$};
\node at (6.45,2){$\dots$};
\node at (8.3,2){$\sigma_{s-1}^{-1} = \usebox\pinlinesigmapminusoneinverse$};
\end{tikzpicture}
\end{center}
\caption{Generators and their inverses for the braid group $B_s$.}
\label{braidgroup}
\end{figure}
Consider the braid group $B_s$ on $s$-strands whose generators are shown in Figure \ref{braidgroup}.
By convention, braid words are read from left to right, and multiplication of words $w_1w_2$ corresponds to stacking the braid $w_2$ on top of $w_1$, see for example Figure \ref{braidclosure}. Each strand in a braid diagram is assumed to be oriented upward.
The \textit{half twist} $\Delta\in B_s$ is defined as $\Delta=(\sigma_1\sigma_2\dots\sigma_{p-1})(\sigma_1\sigma_2\dots\sigma_{p-2})\dots(\sigma_1\sigma_2)(\sigma_1)$, and the \textit{full twist} is $\Delta^2$. The closure of a braid diagram is a diagram of a link (see for example Figure \ref{braidclosure}), and a famous result of Alexander states that every link can be represented by the closure of a braid. For convenience, throughout this paper, a braid word will be used to refer to either an element of the braid group or its braid closure depending on the context in which it appears.
\begin{figure}[ht]
\centering
\begin{tikzpicture}
\node[scale=.8] at (0,0){\usebox\braidclosure};
\end{tikzpicture}
\caption{
        A braid diagram for the word $\Delta^2=\sigma_1\sigma_2\sigma_1\sigma_2\sigma_1\sigma_2\in B_3$ and its braid closure. }
\label{braidclosure}
\end{figure}
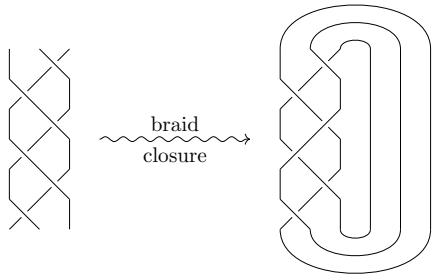
If two elements of a braid group are conjugate, then the corresponding braid closures are isotopic as links. Therefore, it would be convenient to have a classification of elements of the braid group $B_s$ up to conjugacy. For $s=2$, of course, the classification is trivial. For $s\geq 4$, no classification is known. For $s=3$, Murasugi provides the following \cite{murasugi1974closed}.

\begin{thm}[Murasugi]
\label{murasugi1}
Every element of the braid group $B_3$ is conjugate to a unique element of one of the following disjoint sets:
\begin{align*}
\Omega_0= & \; \{\Delta^{2n}\ | \ n\in\Z\},\\
\Omega_1=& \; \{\Delta^{2n}\sigma_1\sigma_2\ | \ n\in\Z\},\\
\Omega_2=& \;\{\Delta^{2n}(\sigma_1\sigma_2)^2\ | \ n\in\Z\},\\
\Omega_3=& \;\{\Delta^{2n+1}\ | \ n\in\Z\},\\
\Omega_4=& \;\{\Delta^{2n}\sigma_1^{-p}\ | \ n\in\Z\},\\
\Omega_5=& \;\{\Delta^{2n}\sigma_2^{q}\ | \ n\in\Z\},~\text{or}\\
\Omega_6=& \;\{\Delta^{2n}\sigma_1^{-p_1}\sigma_2^{q_1}\dots \sigma_1^{-p_r}\sigma_2^{q_r}\ | \ n\in\Z\},
\end{align*}
where $p, q, p_i$ and $q_i$ are positive integers. 
\end{thm}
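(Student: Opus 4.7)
The plan is to reduce the problem to the classical classification of conjugacy classes in the modular group $PSL(2,\Z) \cong \Z/2 * \Z/3$, via the well-known identification $B_3/Z(B_3) \cong PSL(2,\Z)$.

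First, I would recall that $Z(B_3) = \langle \Delta^2\rangle$ (classical, e.g.\ from Garside theory). Set $a := \bar\Delta$ and $b := \overline{\sigma_1\sigma_2}$ in the quotient $G := B_3/\langle\Delta^2\rangle$, so $a^2 = b^3 = 1$. A Tietze transformation expressing $\sigma_1 = b^{-1}a$ and $\sigma_2 = a^{-1}b^2$ (valid already in $B_3$) shows that the braid relation $\sigma_1\sigma_2\sigma_1 = \sigma_2\sigma_1\sigma_2$ is equivalent to $a^2 = b^3$. Hence $B_3 \cong \langle a,b \mid a^2 = b^3\rangle$ and $G \cong \langle a,b \mid a^2, b^3\rangle = \Z/2 * \Z/3$.

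Second, I would invoke the normal form and conjugacy theorem for free products (Magnus--Karrass--Solitar, or Lyndon--Schupp): every element of $G$ is conjugate to a cyclically reduced alternating word, unique up to cyclic rotation of letters. A complete set of representatives therefore consists of the identity, the short elements $a$, $b$, $b^2$, and the alternating words
\[
a b^{\varepsilon_1}\, a b^{\varepsilon_2}\cdots a b^{\varepsilon_s}, \qquad \varepsilon_i \in \{1,2\},\ s \geq 1,
\]
considered up to cyclic rotation of $(\varepsilon_1,\ldots,\varepsilon_s)$. A short computation using $\Delta\sigma_1\Delta^{-1} = \sigma_2$ gives $\bar\sigma_1 = b^2 a$ and $\bar\sigma_2 = ab^2$, and hence $\bar\sigma_1^{-p} = (ab)^p$ and $\bar\sigma_2^q = (ab^2)^q$. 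Consequently the word $\sigma_1^{-p_1}\sigma_2^{q_1}\cdots \sigma_1^{-p_r}\sigma_2^{q_r}$ projects to $(ab)^{p_1}(ab^2)^{q_1}\cdots(ab)^{p_r}(ab^2)^{q_r}$, and the cases ``all $\varepsilon_i = 1$,'' ``all $\varepsilon_i = 2$,'' and ``mixed'' account for $\Omega_4$, $\Omega_5$, $\Omega_6$ respectively.

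Third, I would lift the classification back to $B_3$. Because $\Delta^2$ is central, each conjugacy class in $B_3$ is uniquely determined by its image in $G$ together with the integer $n$ giving the central factor $\Delta^{2n}$; this $n$ is recoverable from the total exponent sum (mod $6$), which is conjugation-invariant. Pairing the four short conjugacy classes in $G$ with arbitrary $n \in \Z$ yields $\Omega_0, \Omega_1, \Omega_2, \Omega_3$, and pairing the alternating classes with arbitrary $n$ yields $\Omega_4, \Omega_5, \Omega_6$. The main obstacle will be the bookkeeping: translating the cyclic-rotation equivalence in $G$ into the specific braid-word normalizations used to parametrize $\Omega_4, \Omega_5, \Omega_6$, and verifying that no two distinct elements of $\bigcup_k \Omega_k$ become conjugate in $B_3$ after multiplication by $\Delta^{2n}$. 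Once this dictionary is set up, both exhaustiveness and disjointness follow formally from the free-product classification.
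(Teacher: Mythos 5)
The paper does not prove this theorem; it is cited verbatim from Murasugi \cite{murasugi1974closed}, so there is no in-paper argument against which to compare. Evaluating your proposal on its own terms: the approach via $B_3 \cong \langle a,b\mid a^2=b^3\rangle$ and $B_3/Z(B_3)\cong \Z/2 * \Z/3 \cong PSL(2,\Z)$ is the standard modern route to Murasugi's classification, and the individual steps check out. The Tietze computation $\sigma_1=b^{-1}a$, $\sigma_2=a^{-1}b^2$, and the conversion of the braid relation to $a^2=b^3$, are correct; so are the projections $\bar\sigma_1^{-1}=ab$, $\bar\sigma_2=ab^2$, and hence $\bar\sigma_1^{-p}=(ab)^p$, $\bar\sigma_2^q=(ab^2)^q$. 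The matching of $G$-conjugacy classes to the $\Omega_i$ is also right: the identity, $a$, $b$, $b^2$ give $\Omega_0,\Omega_3,\Omega_1,\Omega_2$ (note $\overline{\sigma_1\sigma_2}=b^2 a\cdot ab^2=b^4=b$), and the three regimes of cyclically reduced alternating words (all $\varepsilon_i=1$, all $\varepsilon_i=2$, mixed) give $\Omega_4,\Omega_5,\Omega_6$. The lift back to $B_3$ using centrality of $\Delta^2$ together with the abelianization (exponent sum) to distinguish the $\Delta^{2n}$-cosets is also correct, though the phrase ``recoverable from the total exponent sum (mod $6$)'' is imprecise --- what you actually use is that the full integer-valued exponent sum is a conjugation invariant and shifts by exactly $6$ under multiplication by $\Delta^2$, so $n$ is pinned down once the fiber over $G$ is fixed.

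Two remaining points of care, both of which you flag as ``bookkeeping'' but which deserve explicit attention in a complete write-up. First, the statement as reproduced in this paper (and in Murasugi's original) asserts conjugacy to a \emph{unique} element of the listed sets, yet two $\Omega_6$-words whose tuples $(p_1,q_1,\dots,p_r,q_r)$ differ by a cyclic rotation project to cyclically equivalent words in $\Z/2 * \Z/3$ and hence are conjugate in $B_3$; the parametrization is therefore implicitly up to cyclic rotation of the tuple, and a careful proof should say so. Second, one should verify disjointness of the seven families, which in your framework follows cleanly from the free-product classification: the images in $G$ of $\Omega_0,\dots,\Omega_3$ have cyclic length $0$ or $1$, while those of $\Omega_4,\Omega_5,\Omega_6$ have cyclic length $\geq 2$ and are distinguished by the pattern of exponents $\varepsilon_i\in\{1,2\}$ up to rotation. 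Once these are spelled out, the argument is complete and correct.
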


With Murasugi's classification in mind, we use Theorem \ref{GeneralTheorem2} to show that certain classes of $3$-braids have only $\Z_2$ torsion in Khovanov homology (Theorem \ref{mainthm2}), taking a significant step in the direction of proving part (1) of the PS conjecture.

\begin{theorem*}
All torsion in the Khovanov homology of a closed 3-braid $L$ of type $\Omega_0, \Omega_1, \Omega_2,$ or $\Omega_3$ is $\Z_2$ torsion, that is,  $H(L)\cong\Z^k\oplus \Z_2^\ell$ for some $k,\ell\geq 0$.
\end{theorem*}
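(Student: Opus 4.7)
The plan is to apply Theorem \ref{GeneralTheorem2} to each of the four families separately. First, identify the braid closures: $(\sigma_1\sigma_2)^q$ closes to the torus link $T(3,q)$, and since $\Delta^2 = (\sigma_1\sigma_2)^3$, the closures of representatives in $\Omega_0,\Omega_1,\Omega_2$ are exactly the torus links $T(3,3n)$, $T(3,3n+1)$, $T(3,3n+2)$ respectively. The closures of $\Omega_3$ representatives $\Delta^{2n+1}$ form a separate two-component family. By the mirror symmetry $H^{i,j}(\bar L)\cong H^{-i,-j}(L)$, which preserves the presence and type of torsion, we may restrict to $n\geq 0$.

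Next, recall (or cite Stosi\'c's and Turner's computations) the rational and mod-$2$ Khovanov homology of each family; these are the groups displayed in \usebox\TorusThreeNOverZ, \usebox\TorusThreeNPlusOneOverZ, \usebox\TorusThreeNPlusTwoOverZ, and \usebox\DeltaTwoNPlusOneZ. In each case, over $\mathbb{Z}_2$ (and over $\mathbb{Q}$) the homology is supported on two adjacent diagonals $2i-j=s\pm 1$ except on a small initial block at the lowest few homological gradings. For odd primes $p$, the same ranks are forced by Turner's spectral sequence together with the fact that the $\mathbb{Q}$-Poincar\'e polynomial matches the $\mathbb{Z}_2$ Poincar\'e polynomial outside this block (so that no odd torsion is contributed by the Bockstein). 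Choose $i_1$ to be the homological grading immediately above the initial non-thin block, and $i_2$ the maximal nonzero homological grading.

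Then verify the four hypotheses of Theorem \ref{GeneralTheorem2} at this $i_1$: (1) thinness over $[i_1,i_2]$ holds at every prime, directly from the mod-$p$ calculations; (2) the $\mathbb{Z}_p$-rank of $H^{i_1,*}(L)$ equals the $\mathbb{Q}$-rank, since $i_1$ sits just above the anomaly where the integral group in the column $i_1$ is supported in the two thin bigradings and the universal coefficient short exact sequence collapses; (3) $H^{i_1,*}(L)$ is torsion-free, by inspection of the known integral picture at grading $i_1$; (4) the vanishing $H^{i_1-1,j}(L)=0$ for $j\leq 2i_1-s-3$, again from the shape of the $\mathbb{Z}_2$ computation. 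Applying Theorem \ref{GeneralTheorem2} gives that all torsion of $H^{[i_1,i_2]}(L)$ is $\mathbb{Z}_2$ torsion.

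Finally, treat the initial block $i<i_1$ directly. For $T(3,q)$ this is the well-understood "stable" portion of torus-link Khovanov homology, which is torsion-free; for $\Omega_3$ it is a small explicit block that can be checked by hand at the chain level using the Khovanov complex of $\Delta^{2n+1}$. Combining these two regions yields the conclusion that $H(L)\cong \mathbb{Z}^k\oplus \mathbb{Z}_2^\ell$. The main obstacle is condition (2) of Theorem \ref{GeneralTheorem2}: proving that no odd $p$-torsion can sneak into the single column $i=i_1$. This is avoided by siting $i_1$ precisely at the first thin column, where the rank match over $\mathbb{Q}$ and $\mathbb{Z}_2$ together with freeness of the adjacent non-thin block forces torsion-freeness over each $\mathbb{Z}_p$ via the universal coefficient theorem.
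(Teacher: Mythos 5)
Your overall strategy — feed the known $\Q$, $\Z_2$, and $\Z_p$ computations for $\Omega_0,\dots,\Omega_3$ into Theorem \ref{GeneralTheorem2} — is the right one and matches the paper's approach. However, there is a concrete gap in your verification of the hypotheses: you assert that condition (4), the vanishing $H^{i_1-1,j}(L)=0$ for $j\leq 2i_1-s-3$, ``again follows from the shape of the $\Z_2$ computation.'' This is false for $T(3,3n)$. In that family the column $i_1-1 = -4n$ is precisely the non-thin block, and it carries a nonzero group (a $\Z_2^2$ with $\Z_2$ coefficients, a $\Z^2$ integrally) sitting exactly at polynomial degree $2i_1-s-3$, one step below the thin diagonals that begin at $i_1$. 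So condition (4) of Theorem \ref{GeneralTheorem2} is violated, and that theorem simply does not apply as stated for the leading thin block of $T(3,3n)$. The paper is explicit on this point and handles it by falling back to the stronger Theorem \ref{GeneralTheorem}: the offending $\Z_2$ summands in grading $i_1-1$ all survive to the $E^\infty$ pages of both the Turner and Lee spectral sequences, so all Lee and Turner differentials vanish in that grading, which is exactly hypothesis (4) of Theorem \ref{GeneralTheorem}. Your proof as written never invokes Theorem \ref{GeneralTheorem} and thus leaves the first thin region of $T(3,3n)$ uncontrolled.

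Two smaller points. First, for condition (2), ``the $\Q$-Poincar\'e polynomial matches the $\Z_2$ Poincar\'e polynomial'' cannot be the right justification (they visibly differ in the figures); what the paper actually uses is Turner's theorem that $H(T(3,q))$ has no odd torsion at all (Theorem \ref{toruslinksoddtorsion}), together with the analogous statement for $\Omega_3$ (Corollary \ref{NoOddTorsionOmega3}) proved by the long exact sequence in Theorem \ref{omega3comp}; these give $\dim_\Q H^{i_1,*}=\dim_{\Z_p}H^{i_1,*}$ for odd $p$ by the universal coefficient theorem. Second, for the initial non-thin block you propose a ``chain level'' computation for $\Omega_3$; the paper instead handles $\Omega_3$ uniformly via the unknotting long exact sequence with respect to the top crossing of $\Delta^{2n+1}$, and the non-thin blocks of $T(3,q)$ are read off directly from Turner's and Benheddi's tables, so no hand computation is needed. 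Neither of these is fatal to your outline, but the missing exceptional case for $T(3,3n)$ is: you must either verify that the extra entries in grading $i_1-1$ of $T(3,3n)$ survive the Lee and Turner spectral sequences and then apply Theorem \ref{GeneralTheorem} directly, or shift $i_1$ in a way that re-establishes condition (4), which is not possible here since the non-thin column immediately precedes the first torsion.
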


This paper is organized as follows. In Section \ref{Construction}, we give a construction of Khovanov homology. In Section \ref{ComputationalTools}, we provide several computational tools in the form of a long exact sequence and various spectral sequences associated to alternate differentials on the Khovanov complex. In Section \ref{TheMainResult}, we prove the main result of this paper, providing conditions on the Khovanov homology of a link $L$ under which all torsion in thin regions of the integral Khovanov homology of $L$ is $\Z_2$ torsion. In order to achieve this result, we will analyze interactions between the spectral sequences of Section \ref{ComputationalTools}. In Section \ref{application}, we give an application of the main result, showing that all torsion in the Khovanov homology of links in $\Omega_0, \Omega_1, \Omega_2$ and $\Omega_3$ is $\Z_2$ torsion, and we give explicit calculations of the integral Khovanov homology of links in $\Omega_0, \Omega_1, \Omega_2$ and $\Omega_3$. We end with an explanation of why Theorem \ref{GeneralTheorem2} does not apply to all closed $3$-braids.

\section*{Acknowledgements} The authors are thankful for helpful conversations with John McCleary and Alex Shumakovitch, and for the indispensable comments of the anonymous referees.

\section{A construction of Khovanov homology}\label{khovanovhomology}\label{Construction}

Khovanov homology is an invariant of oriented links $L \subset S^3$ with values in the category of bigraded modules over a commutative ring $R$ with identity. We begin with a construction of Khovanov homology. Our conventions for positive and negative crossings, and for zero- and one-smoothings, are as given in Figure \ref{Conventions}.
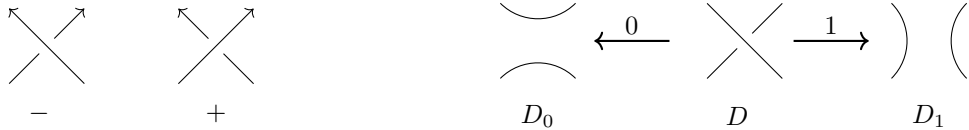
\begin{figure}[ht]
\centering
\begin{tikzpicture}
\useasboundingbox (-2,-1) rectangle (11.5,1);
\node at (0,0){\usebox\posnegconv};
\node at (8,0){\usebox\crossingconv};
\end{tikzpicture}
\caption{Conventions for crossings and smoothings in link diagrams.}
\label{Conventions} 
\end{figure}

\noindent A bigraded $R$-module is an $R$-module $M$ with a direct sum decomposition of the form $M = \bigoplus_{i,j \in \Z} M^{i,j}$. The submodule $M^{i,j}$ is said to have \textit{bigrading} $(i,j)$. For our purposes we will refer to $i$ as the \textit{homological grading} and to $j$ as the \textit{polynomial grading}. Given two bigraded $R$-modules $M = \bigoplus_{i,j \in \Z} M^{i,j}$ and $N = \bigoplus_{i,j \in \Z} N^{i,j}$, we define the direct sum $M \oplus_R N$ and tensor product $M \otimes_R N$ to be the bigraded $R$-modules with components $(M \oplus N)^{i,j} = M^{i,j} \oplus N^{i,j}$ and $(M \otimes N)^{i,j} = \bigoplus_{k+m=i,l+n=j} M^{k,l} \otimes N^{m,n}$. We also define homological and polynomial shift operators, denoted $[\cdot]$ and $\{ \cdot \}$, respectively, by $(M[r])^{i,j} = M^{i-r,j}$ and $(M\{s\})^{i,j} = M^{i,j-s}$.

Consider the directed graph whose vertex set $\mathcal{V}(n) = \{0,1\}^n$ comprises $n$-tuples of $0$'s and $1$'s and whose edge set $\mathcal{E}(n)$ contains a directed edge from $I \in \mathcal{V}(n)$ to $J \in \mathcal{V}(n)$ if and only if all entries of $I$ and $J$ are equal except for one, where $I$ is $0$
 and $J$ is $1$. One can think of the underlying graph as the 1-skeleton of the $n$-dimensional cube. For example, if $n=4$ there are two outward edges starting from $(0,1,0,1)$, one ending at $(0,1,1,1)$ and the other ending at $(1,1,0,1)$. The height of a vertex $I = (k_1,k_2,\ldots,k_n)$ is $h(I) = k_1 + k_2 + \cdots + k_n$. In other words, $h(I)$ is the number of $1$'s in $I$. If $\varepsilon$ is an edge from $I$ to $J$ and they differ in the $r$-th entry, the height of $\varepsilon$ is $|\varepsilon| := h(I)$. The sign of $\varepsilon$ is $(-1)^{\varepsilon} := (-1)^{\sum_{i=1}^{r-1} k_i}$. In other words, the sign of $\varepsilon$ is $-1$ if the number of $1$'s
before the $r$-th entry is odd, and is $+1$ if even.

Let $D$ be a diagram of a link $L$ with $n$ crossings $c_1, c_2, \ldots, c_n$. To each vertex $I \in \mathcal{V}(n)$ we associate a collection of circles $D(I)$, called a Kauffman state, obtained by $0$-smoothing those crossings $c_i$ for which $k_i=0$ and $1$-smoothing those crossings $c_j$ for which $k_j=1$. The $0$- and $1$-smoothing conventions are given in Figure \ref{Conventions}. To each Kauffman state we associate a bigraded $R$-module $C(D(I))$ as follows. Let $\mathcal{A}_2 = R[X]/X^2 \cong R1\oplus RX$ be the bigraded module with generator $1$ in bigrading $(0,1)$ and generator $X$ in bigrading $(0,-1)$. Denoting the number of circles in $D(I)$ by $|D(I)|$, we define 
\[C(D(I)) := \mathcal{A}_2^{\otimes |D(I)|}[h(I)]\{ h(I) \}.\]
Here, each tensor factor of $\mathcal{A}_2$ is understood to be associated with a particular circle of $D(I)$. Having associated modules with vertices, we now associate maps with directed edges. Define $R$-module homomorphisms $m : \mathcal{A}_2 \otimes \mathcal{A}_2 \rightarrow \mathcal{A}_2$ and $\Delta : \mathcal{A}_2 \rightarrow \mathcal{A}_2 \otimes \mathcal{A}_2$ (called multiplication and comultiplication, respectively) by
\[
	m(1 \otimes 1) = 1, \quad m(1 \otimes X) = X, \quad m(X \otimes 1) = X, \quad m(X \otimes X) = 0,
\]
\[
	\Delta(1) = 1 \otimes X + X \otimes 1, \quad \Delta(X) = X \otimes X.
\]
To an edge $\varepsilon$ from $I$ to $J$ we associate the map $d_{\varepsilon} : C(D(I)) \rightarrow C(D(J))$ defined as follows.
\begin{enumerate}[(1)]

\item If $|D(J)| = |D(I)| - 1$, then $D(J)$ is obtained by merging two circles of $D(I)$ into one: $d_{\varepsilon}$ acts as multiplication $m$ on the tensor factors associated to the circles being merged, and acts as the identity map on the remaining tensor factors.

\item If $|D(J)| = |D(I)| + 1$ then $D(J)$ is obtained by splitting one circle of $D(I)$ into two: $d_{\varepsilon}$ acts as comultiplication $\Delta$ on the tensor factor associated to the circle being split, and acts as the identity map on the remaining tensor factors.

\end{enumerate}
Suppose $D$ has $n_+$ positive crossings and $n_-$ negative crossings. Define the bigraded module 
\begin{equation}\label{KhovanovComplex}
C^{*,*}(D) = \bigoplus_{I \in \mathcal{V}(n)} C(D(I))[-n_-]\{ n_+ - 2n_- \}.
\end{equation}
Define maps $d^i : C^{i,*}(D) \rightarrow C^{i+1,*}(D)$ by $d^i = \sum_{|\varepsilon| = i} (-1)^{\varepsilon} d_{\varepsilon}$. In \cite{khovanov1999categorification}, Khovanov shows that $(C(D),d)$ is a (co)chain complex, and since the maps $d^i$ are polynomial degree-preserving, we get a bigraded homology $R$-module
\[
H(L;R) = \bigoplus_{i,j \in \Z} H^{i,j}(L;R)
\]
called the Khovanov homology of the link $L$ with coefficients in $R$, or, more compactly, the $R$-Khovanov homology of $L$. Proofs that $d$ is a differential and that these homology groups are independent of the diagram $D$ and the ordering of the crossings can be found in \cite{khovanov1999categorification,viro2004khovanov,bar2005khovanov}. If $R=\Z$ we simply write $H(L) = H(L;\Z)$. In this article we will focus on the rings $R = \Z, \Q$, and $\Z_p$, where $p$ is a prime.\\

\noindent The (unnormalized) Jones polynomial is recovered as the graded Euler characteristic of Khovanov homology:
\[
\hat{J}_L(q) = \sum_{i,j \in \Z} (-1)^i q^j \cdot \textnormal{rk}(H^{i,j}(L)).
\]

\section{Computational tools}\label{ComputationalTools}

\subsection{A long exact sequence in Khovanov homology}\label{computational}\label{LES}

Given an oriented link diagram $D$, there is a long exact sequence relating $D$ to the zero and one smoothings $D_0$ and $D_1$ at a given crossing of $D$ \cite{khovanov1999categorification}. Each crossing in an oriented link diagram is either positive $\left(~\tikz[baseline=.6ex, scale = .4]{
\draw[->] (0,0) -- (1,1);
\draw (1,0) -- (.7,.3);
\draw[->] (.3,.7) -- (0,1);
}~\right)$ or negative $\left(~\tikz[baseline=.6ex, scale = .4]{
\draw[->] (.7,.7) -- (1,1);
\draw[->] (1,0) -- (0,1);
\draw (0,0) -- (.3,.3);
}~\right)$.
If the crossing is negative, we set $c=n_-(D_0)-n_-(D)$ to be the number of negative crossings in $D_0$ minus the number in $D$. For each $j$ there is a long exact sequence

\begin{equation}
\begin{tikzcd}
\arrow[r,"\delta_*"]&H^{i,j+1}(D_1) \arrow[r] 
& H^{i,j}(D)\arrow[r]& H^{i-c,j-3c-1}(D_0)\arrow[r,"\delta_*"] & H^{i+1,j+1}(D_1)\arrow[r]&{}
\end{tikzcd}
\label{negativeLES}
\end{equation}

Note that in this case $D_1$ inherits an orientation, and $D_0$ must be given one. Similarly, if the crossing is positive, we set $c=n_-(D_1)-n_-(D)$ to be the number of negative crossings in $D_1$ minus the number in $D$. For  each $j$ there is a long exact sequence
\begin{equation}
\begin{tikzcd}
\arrow[r,"\delta_*"]&H^{i-c-1,j-3c-2}(D_1) \arrow[r] 
& H^{i,j}(D)\arrow[r]& H^{i,j-1}(D_0)\arrow[r,"\delta_*"] & H^{i-c,j-3c-2}(D_1)\arrow[r]&{}
\end{tikzcd}
\label{positiveLES}
\end{equation}
and note that in this case $D_0$ inherits an orientation, and $D_1$ must be given one.

\subsection{The Bockstein spectral sequence}\label{BocksteinSS}

The Bockstein spectral sequence arises as the spectral sequence associated to an exact couple, and is a powerful tool for analyzing torsion in homology theories.

\begin{defn}
Let $D_0$ and $E_0$ be $R$-modules, and let $i_0 : D_0 \rightarrow D_0$, $j_0 : D_0 \rightarrow E_0$ and $k_0 : E_0 \rightarrow D_0$ be $R$-module homomorphisms satisfying $\ker(i_0) = \im(k_0)$, $\ker(j_0) = \im(i_0)$ and $\ker(k_0) = \im(j_0)$. The pentuple $(D_0,E_0,i_0,j_0,k_0)$ is called an exact couple. Succinctly, an exact couple is an exact diagram of $R$-modules and homomorphisms of the following form.

\begin{center}
\begin{tikzcd}
D_0 \ar[rr,"i_0"] &  & D_0 \ar[ddl, "j_0"]\\
& &\\
& E_0 \ar[luu, "k_0"]&
\end{tikzcd}
\end{center}

\end{defn}

The map $d_0 : E_0 \rightarrow E_0$ defined by $d_0 := j_0 \circ k_0$ satisfies $d_0 \circ d_0 = (j_0 \circ k_0) \circ (j_0 \circ k_0) = j_0 \circ (k_0 \circ j_0) \circ k_0 = 0$ and so defines a differential on $E_0$. Define $E_1=H(E_0,d_0)$ to be the homology of the pair $(E_0,d_0)$, and define $D_1 = \im(i_0 : D_0 \rightarrow D_0)$. Also, define maps $i_1 : D_1 \rightarrow D_1$, $j_1 : D_1 \rightarrow E_1$ and $k_1 : E_1 \rightarrow D_1$ by $i_1 = i_0|_{D_1}$, $j_1(i_0(a)) = j_0(a) + d_0E_0$ and $k_1(e + d_0E_0) = k_0(e)$. The resulting pentuple $(D_1,E_1,i_1,j_1,k_1)$ is another exact couple \cite[Proposition 2.7]{mccleary2001user}. Iterating this process produces a sequence $(E_r,d_r)_{r \ge 0}$ called the \textit{spectral sequence associated to the exact couple} $(D_0,E_0,i_0,j_0,k_0)$.

Given a chain complex $(C,d)$, let us denote its integral homology by $H(C)$ and its homology over coefficients in $\Z_p$ by $H(C;\Z_p)$. Consider the short exact sequence
\begin{equation}\label{BocksteinSES1}
\begin{tikzcd}
0 \ar[r] & \Z \ar[r,"\times p"] & \Z \ar[r,"\red p"] & \Z_p \ar[r] & 0
\end{tikzcd}
\end{equation}
where $\times p$ is multiplication by $p$, and $\red p$ is reduction modulo $p$. Tensoring a chain complex $(C,d)$ with (\ref{BocksteinSES1}) yields a short exact sequence of chain complexes

\begin{center}
\begin{tikzcd}
0 \ar[r] & C \ar[r,"\times p"] & C \ar[r,"\red p"] & C \otimes \Z_p \ar[r] & 0.
\end{tikzcd}
\end{center}
The associated long exact sequence in homology can be viewed as an exact couple

\begin{equation}\label{BocksteinExactCouple}
\begin{tikzcd}
H(C) \ar[rr,"\times p"] &  & H(C) \ar[ddl, "\red p"]\\
& &\\
& H(C;\Z_p) \ar[luu, "\partial"]&
\end{tikzcd}
\end{equation}
where $\partial$ is the connecting homomorphism.

\begin{defn}
Let $p$ be prime. The spectral sequence $(E_B^r,d_B^r)_{r \ge 1}$ associated to the exact couple (\ref{BocksteinExactCouple}) is called the $\Z_p$ Bockstein spectral sequence.
\end{defn}

The following properties of the Bockstein spectral sequence will be of great importance for our purposes, the proofs of which can be found in \cite[Chapter 10]{mccleary2001user} and \cite[Proposition 3E.3]{hatcher2002algebraic}. 

\begin{enumerate}[(B1)]

\item The first page is $(E_B^1,d_B^1) \cong (H(C;\Z_p), \beta)$ where $\beta = (\red p) \circ \partial$. 

\item The infinity page is $E_B^{\infty} \cong (H(C)/\text{Torsion}) \otimes \Z_p$.

\item If the Bockstein spectral sequence collapses on the $E^k$ page for a particular bigrading $(i,j)$, that is, if $(E_B^k)^{i,j} = (E_B^{\infty})^{i,j}$, then $H^{i,j}(C)$ contains no $\Z_{p^r}$ torsion for $r \ge k$. 

\item If $\rk (d_B^1)^{i-1,j}$ is equal to the number of summands of $\Z_{p^r}$ in $H^{i,j}(C)$ for all $r$,
then there is no torsion of order $p^{\ell}$ in $H^{i,j}(C)$ for $\ell\geq 2$.
\label{bockprop}


\item If $C=C^{*,*}$ is a bigraded complex with a bidegree $(1,0)$ differential, then the differentials $d_B^r$ also have bidegree $(1,0)$.

\end{enumerate}

For our purposes, we use the $\Z_2$ Bockstein spectral sequence and property (B\ref{bockprop}) to prove there is only $\Z_2$ torsion in certain bigradings.
To this end, we will consider interactions between the Bockstein spectral sequence and the Turner spectral sequence, discussed below.

\subsection{Bar-Natan homology and the Turner spectral sequence}\label{specseq}\label{TurnerSS}

    In \cite{bar2005khovanov} Bar-Natan constructs another link invariant in the form of a bigraded homology theory $\BN^{*,*}(L)$, this time over the ring $\Z_2[u]$, where $u$ is a formal variable. 
    This construction mirrors that of Khovanov homology as in Section \ref{Construction}, but using a different Frobenius algebra. Setting $u=1$, one obtains the singly graded \textit{filtered Bar-Natan homology}, denoted $\BN^*(L)^\prime$. 
    Filtered Bar-Natan homology can be constructed as in Section \ref{Construction}, but with differential defined via the following multiplication and comultiplication:
\[
	m_B(1 \otimes 1) = 1, \quad m_B(1 \otimes X) = X, \quad m_B(X \otimes 1) = X, \quad m_B(X \otimes X) = X,
\]
\[
	\Delta_B(1) = 1 \otimes X + X \otimes 1 + 1 \otimes 1, \quad \Delta_B(X) = X \otimes X.
\]
%
    Turner shows the following in \cite[Theorem 3.1]{turner2006calculating}. 

\begin{lem}[Turner]\label{Bar-Natan dimension}
Let $L$ be an oriented link with $k$ components. Then, $\dim_{\Z_2}(\BN^*(L)^\prime) = 2^k$. Specifically, if the components of $L$ are $L_1, L_2, \ldots, L_k$, then
\[
	\dim_{\Z_2}(\BN^i(L)^\prime) = \#\left\{ E \subset \{1,2, \ldots, k \} \left| \ 2 \times \sum_{\ell \in E, m \in E^c} lk(L_\ell,L_m) = i \right. \right\}
\]
where $lk(L_\ell,L_m)$ is the linking number between $L_\ell$ and $L_m$.
\end{lem}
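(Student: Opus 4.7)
The plan is to adapt Lee's proof for Khovanov homology over $\Q$ to the characteristic-two setting by exploiting the fact that, although $\mathcal{A}_2 = \Z_2[X]/(X^2)$ is local as an ordinary $\Z_2$-algebra, the Bar-Natan multiplication $m_B$ satisfies $X\cdot X = X$, so the underlying Bar-Natan Frobenius algebra is isomorphic to $\Z_2[X]/(X^2 - X)$. By the Chinese Remainder Theorem this splits as $\Z_2 \times \Z_2$ with complementary idempotents $a = X$ and $b = 1+X$. A direct computation confirms that in this new basis both the Bar-Natan multiplication and comultiplication become diagonal: $m_B(a \otimes a) = a$, $m_B(b \otimes b) = b$, $m_B(a \otimes b) = m_B(b \otimes a) = 0$, and $\Delta_B(a) = a \otimes a$, $\Delta_B(b) = b \otimes b$.

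Next I would use this diagonalization to split the filtered Bar-Natan chain complex of a diagram $D$ as a direct sum of subcomplexes indexed by ``coherent'' $\{a,b\}$-labelings of the circles in the Kauffman states. Because the diagonal form of $m_B$ and $\Delta_B$ forces each merge or split edge to preserve labels, each surviving summand is determined by its labeling at a single vertex of the cube, which is most naturally encoded by an orientation $o$ of $L$: given $o$, one takes the oriented resolution $D_o$ (each positive crossing $0$-smoothed, each negative crossing $1$-smoothed) and labels each resulting Seifert circle by $a$ or $b$ according to whether it is oriented counterclockwise or clockwise in the plane. The resulting ``canonical state'' $\mathfrak{s}_o$ is automatically a cycle in its subcomplex. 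The hard part is then to show that each of the $2^k$ subcomplexes is quasi-isomorphic to the single copy of $\Z_2$ generated by its canonical state; I would handle this by a Lee-type cancellation argument, constructing an explicit chain homotopy that pairs off all non-canonical generators by toggling a chosen crossing where the induced labeling is ``mismatched.'' Verifying that this homotopy kills every generator other than $\mathfrak{s}_o$, across all Kauffman states simultaneously, is the main obstacle.

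Finally, to pin down the homological grading of each $\mathfrak{s}_o$, I would fix the given orientation of $L$ as a base orientation and index the remaining orientations by subsets $E \subseteq \{1,\ldots,k\}$ of components whose orientations have been reversed. The oriented resolutions of the base and reversed orientations differ exactly at those crossings where one strand lies in a component of $E$ and the other in $E^c$; at each such crossing the required smoothing switches between $0$ and $1$, and the sign of the crossing determines the direction of the shift. A standard bookkeeping computation, identical to Lee's, then yields the homological shift $2 \sum_{\ell \in E,\, m \in E^c} lk(L_\ell, L_m)$ between $\mathfrak{s}_{o_0}$ and $\mathfrak{s}_{o_E}$. Summing the contributions over all $2^k$ subsets $E$ produces both the total rank $\dim_{\Z_2}(\BN^*(L)') = 2^k$ and the refined count of summands in each homological degree claimed in the lemma.
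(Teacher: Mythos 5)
The paper does not prove this lemma; it cites Turner's Theorem 3.1 in \cite{turner2006calculating}, and your plan is essentially Turner's argument (itself an adaptation of Lee's). The diagonalization you describe is correct: over $\Z_2$ the Bar-Natan algebra is indeed $\Z_2[X]/(X^2-X) \cong \Z_2 \times \Z_2$ via $a = X$, $b = 1+X$, and the formulas you give for $m_B$ and $\Delta_B$ in the idempotent basis check out. The grading computation via linking numbers at the end is also correct and is the standard Lee bookkeeping.

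However, there is a concrete error in the middle of the argument. The labeling rule you propose for the canonical state --- assigning $a$ or $b$ to a circle of the oriented resolution ``according to whether it is oriented counterclockwise or clockwise in the plane'' --- is not Lee's rule and does not work. Lee's rule assigns the label based on the \emph{parity of the nesting depth combined with} the orientation of the circle, not the orientation alone. The point of the rule is to guarantee that at every crossing of the oriented resolution the two adjacent circles carry opposite labels, so that $m_B(a \otimes b) = 0$ kills the outgoing merge differentials and $\mathfrak{s}_o$ is a cycle. Your rule fails this already for a $(2,n)$-torus link braid closure: the two Seifert circles of the oriented resolution are nested and carry the \emph{same} rotational sense (say both clockwise), so your rule would give them the same label, yet they touch at every crossing. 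With the nesting term included they get opposite labels, as required. Without fixing the rule, $\mathfrak{s}_o$ is not a cycle in its subcomplex and the rest of the argument does not go through.

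Beyond that, the central quasi-isomorphism claim --- that each subcomplex containing a canonical generator has one-dimensional homology and all other subcomplexes are acyclic --- is only sketched, and you flag it yourself as ``the main obstacle.'' That is exactly the substance of Lee's and Turner's proofs (handled via a filtration argument or by Bar-Natan-style delooping and Gaussian elimination), and a complete proof would need it filled in. As it stands, the write-up identifies the right decomposition and the right target statement, but the labeling step is incorrect and the cancellation step is missing.
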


\begin{exa}
Let $T(3,q)$ be the $(3,q)$-torus link. Specifically, $T(3,3n)$ is the closure of the braid $\Delta^{2n}$, $T(3,3n+1)$ is the closure of the braid $\Delta^{2n}\sigma_1\sigma_2$ and $T(3,3n+2)$ is the closure of $\Delta^{2n}(\sigma_1\sigma_2)^2$, each oriented so that crossings in $\Delta$ are negative crossings.
\begin{enumerate}[(1)]\label{BN(T(3,3n))}

\item The torus link $T(3,3n)$ has three components and $BN^0(T(3,3n)) \cong \Z_2^2$, $\BN^{-4n}(T(3,3n))^\prime \cong \Z_2^6$ and $\BN^i(T(3,3n))^\prime\cong 0$ 
for $i\neq 0$ or $-4n$. 

\item The torus knot $T(3,3n+1)$ satisfies $\BN^0(T(3,3n+1))^\prime \cong \Z_2^2$ and $\BN^i(T(3,3n+1))^\prime \cong 0$ for $i\neq 0$.

\item The torus knot $T(3,3n+2)$ satisfies $\BN^0(T(3,3n+2))^\prime \cong \Z_2^2$ and $\BN^i(T(3,3n+2))^\prime \cong 0$ for $i\neq 0$.

\end{enumerate}
\label{exampleturner}
\end{exa}

In \cite{turner2006calculating} Turner defines a map $d_T$ on the $\Z_2$-Khovanov complex in the same manner as the Khovanov differential $d$, but with multiplication and comultiplication given by
\[
	m_T(1 \otimes 1) = 0, \quad m_T(1 \otimes X) = 0, \quad m_T(X \otimes 1) = 0, \quad m_T(X \otimes X) = X,
\]
\[
	\Delta_T(1) = 1 \otimes 1, \quad \Delta_T(X) = 0,
\]
and the map $d_T$ satisfies $d_T^2=0$ and $d \circ d_T + d_T \circ d = 0$.

\begin{defn}[Turner]\label{TurnerSSproperties}
Let $D$ be a diagram of a link $L$ and $(C(D;\Z_2),d)$ be the $\Z_2$-Khovanov complex. The spectral sequence $(E_T^r, d_T^r)_{r \ge 1}$ associated to the double complex $(C(D;\Z_2),d,d_T)$ is called the Turner spectral sequence.
\end{defn}

The Turner spectral sequence satisfies the following properties.

\begin{enumerate}[(T1)]
\item The first page is $(E_T^1,d_T^1) \cong (H(L;\Z_2),d_T^*)$ where $d_T^* : H(L;\Z_2) \rightarrow H(L;\Z_2)$ is the induced map on homology.

\item Each map $d_T^r$ is a differential of bidegree $(1,2r)$. \label{diffTurner}

\item  If $L$ is homologically thin over $\Z_2$, then the Turner spectral sequence collapses at the second page.
\label{Turnercollapse}

\item 
The dimension of the infinity page of the Turner spectral sequence is $2^n$ where $n$ is the number of components of $L$. There is a generator $s_{\mathfrak{o}}$ for each orientation $\mathfrak{o}$ of $L$ representing a nontrivial homology classes on the infinity page of the Turner spectral sequence. Let $\overline{\mathfrak{o}}$ be the reverse orientation of $\mathfrak{o}$. The polynomial gradings of $s_{\mathfrak{o}}$ and $s_{\mathfrak{o}}+s_{\overline{\mathfrak{o}}}$ differ by two, while Lemma \ref{Bar-Natan dimension} implies their homological gradings are the same. In summary, if $P=\Z_2\oplus \Z_2$, where one copy of $\Z_2$ is in bidegree $(0,1)$ and the other is in bidegree $(0,-1)$, then 
\[E_T^{\infty}=\bigoplus_{i=1}^{n-1}P[h_i]\{p_i\}.\]
Because $E^1_T\cong H(L;\Z_2)$, there are nontrivial $\Z_2$ summands in $H^{h_i,p_i\pm1}(L;\Z_2)$.
See the proof of Theorem 3.1 in \cite{turner2006calculating} and Proposition 2.6 in \cite{ls} for details.
\label{pawnmovesturner}


\item The Turner spectral sequence converges to Bar-Natan homology: $E_T^r \implies \BN^*(L)^\prime$, where \[\BN^i(L)^\prime=\bigoplus_{j}(E_T^\infty)^{i,j}.\]
\end{enumerate}

The next ingredient we need is a `vertical' differential on the $\Z_2$-Khovanov complex due to Shumakovitch \cite{shumakovitch2004torsion}. Let $D$ be a diagram of a link $L$. 
A differential $\nu : C(D;\Z_2) \rightarrow C(D;\Z_2)$ is defined as follows. Recall that for a Kauffman state $D(I)$, the algebra $C(D(I))$ has $2^{|D(I)|}$ generators of the form $a_1 \otimes a_2 \otimes \cdots \otimes a_{|D(I)|}$ where $a_i \in \{ 1,X \}$ and $|D(I)|$ is the number of circles in $D(I)$. For each Kauffman state we define a map $\nu_{D(I)} : C(D(I)) \rightarrow C(D(I))$ by sending a generator to the sum of all possible generators obtained by replacing a single $X$ with a $1$. For example, $X \otimes X \otimes X \mapsto 1 \otimes X  \otimes X + X \otimes 1 \otimes X + X \otimes X \otimes 1$. We then extend $\nu_{D(I)}$ linearly to all of $C(D(I))$, and then to a map $\nu : C(D;\Z_2) \rightarrow C(D;\Z_2)$. The properties of this map relevant to our purposes are given below, the proofs of which can be found in Shumakovitch \cite{shumakovitch2004torsion}. 

\begin{enumerate}[(V1)]

\item The map $\nu$ is a differential of bidegree $(0,2)$.

\item The map $\nu$ commutes with the Khovanov differential $d$, and so induces a map (differential) $\nu^* : H(L;\Z_2) \rightarrow H(L;\Z_2)$ on homology.

\item The complex $(H(L;\Z_2), \nu^*)$ is acyclic, that is, it has trivial homology.
\label{viso}

\end{enumerate}

The following lemma of Shumakovitch \cite[Lemma 3.2.A]{shumakovitch2018torsion} is the key to proving Theorem \ref{GeneralTheorem}. 
It relates the first $\mathbb{Z}_2$ Bockstein map with the Turner and vertical differentials. We use the behavior of the Turner spectral sequence and the acyclic homology induced by $\nu$ to determine when the $\mathbb{Z}_2$ Bockstein spectral sequence collapses.

\begin{lem}[Shumakovitch]
\label{TBV}
Let $L$ be a link. The Bockstein, Turner and vertical differentials on the $\Z_2$-Khovanov homology $H(L;\Z_2)$ of $L$ are related by $d_T^* = d_B^1 \circ \nu^* + \nu^* \circ d_B^1$.\label{specseqrelations}
\end{lem}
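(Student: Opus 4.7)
The plan is to prove the identity at the chain level by lifting the $\Z_2$-Khovanov complex to coefficients in $\Z_4$ and tracking the defect in commutativity between $\tilde\nu$ and the Khovanov differential. Let $d$ denote the Khovanov differential lifted to $C(D;\Z_4)$ via the integral Frobenius formulas, and let $\tilde\nu$ and $\tilde d_T$ be the lifts of $\nu$ and $d_T$ defined by the same formulas ($\tilde\nu_{\text{unit}}(X)=1$, $\tilde\nu_{\text{unit}}(1)=0$, Turner's $m_T$, $\Delta_T$). Recall that $\beta=d_B^1$ arises from $0 \to \Z_2 \xrightarrow{\times 2} \Z_4 \to \Z_2 \to 0$: for a $\Z_2$-cycle $x$ with any lift $\tilde x \in C(D;\Z_4)$, the chain $d\tilde x$ is divisible by $2$, say $d\tilde x = 2y$, and $\beta[x] = [y \bmod 2]$.

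The key chain-level claim to establish is
\begin{equation*}
\tilde\nu \circ d - d \circ \tilde\nu \;=\; 2\, \tilde d_T \quad \text{in } C(D;\Z_4).
\end{equation*}
This would be verified by computing the defect $\tilde\nu_{\text{tgt}} \circ (\text{edge map}) - (\text{edge map}) \circ \tilde\nu_{\text{src}}$ on each edge of the cube of resolutions. On a merge edge, $\tilde\nu m - m(\tilde\nu \otimes \mathrm{id} + \mathrm{id} \otimes \tilde\nu)$ vanishes except on the input $X \otimes X$, where it equals $-2X \equiv 2X \equiv 2\, m_T(X \otimes X) \pmod 4$. On a split edge, $(\tilde\nu \otimes \mathrm{id} + \mathrm{id} \otimes \tilde\nu)\Delta - \Delta\tilde\nu$ vanishes except on the input $1$, where it equals $2(1 \otimes 1) = 2\, \Delta_T(1)$. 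Assembling these contributions with the cube signs $(-1)^\varepsilon$ yields the global identity; the $\Z_4$-coefficient identity $-2 \equiv 2$ is what allows the merge defect (naturally of sign $-2$) and the split defect (of sign $+2$) to combine uniformly into $2\tilde d_T$.

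With this identity in hand, the Bockstein formula follows by direct computation. Applying the chain identity to $\tilde\nu \tilde x$ and using $d\tilde x = 2y$ gives
\begin{equation*}
d(\tilde\nu \tilde x) \;=\; \tilde\nu(d\tilde x) + 2\tilde d_T\,\tilde x \;=\; 2\bigl(\tilde\nu y + \tilde d_T\,\tilde x\bigr),
\end{equation*}
so $\tilde\nu \tilde x$ is a lift of $\nu x$ whose Bockstein is represented by $\nu y + d_T x$ modulo $2$; that is, $\beta \nu^*[x] = [\nu y + d_T x]$. On the other hand, $\nu^* \beta[x] = [\nu y]$. Summing these over $\Z_2$ cancels the $[\nu y]$ contributions and yields $[d_T x] = d_T^*[x]$, as required. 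The main obstacle is the edge-by-edge verification of the chain-level commutator identity, which requires careful bookkeeping of cube signs and of the sign discrepancy between merge and split defects, though this discrepancy is automatically absorbed once one works modulo $4$.
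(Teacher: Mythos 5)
The paper does not supply a proof of this lemma; it is cited directly from Shumakovitch \cite[Lemma 3.2.A]{shumakovitch2018torsion}, so there is no in-paper argument to compare against. Your proof is a correct, self-contained verification. The $\Z_4$ lift is the right place to read off the first $\Z_2$-Bockstein, since the Bockstein attached to $0 \to \Z \xrightarrow{\times 2} \Z \to \Z_2 \to 0$ (the paper's definition of $d_B^1$) agrees on the first page with the one attached to $0 \to \Z_2 \xrightarrow{\times 2} \Z_4 \to \Z_2 \to 0$. Your edge-by-edge commutator computation is accurate: $\tilde\nu m - m(\tilde\nu\otimes\mathrm{id}+\mathrm{id}\otimes\tilde\nu)$ vanishes except on $X\otimes X$ where it is $-2X = 2\,m_T(X\otimes X)$ mod $4$, and $(\tilde\nu\otimes\mathrm{id}+\mathrm{id}\otimes\tilde\nu)\Delta - \Delta\tilde\nu$ vanishes except on $1$ where it is $2(1\otimes 1) = 2\,\Delta_T(1)$; since only one or two tensor factors participate in each edge map, the commutator localizes there and the cube signs merely scale each defect by $\pm 1$, which is invisible after multiplying by $2$ mod $4$. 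This gives $\tilde\nu d - d\tilde\nu \equiv 2\tilde d_T \pmod 4$, and the Bockstein bookkeeping then cancels the $[\nu y]$ terms over $\Z_2$, leaving $d_T^*[x]$.

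One small slip worth flagging for clarity rather than correctness: from $\tilde\nu d - d\tilde\nu = 2\tilde d_T$ you should have written $d(\tilde\nu\tilde x) = \tilde\nu(d\tilde x) - 2\tilde d_T\tilde x$, not $+2\tilde d_T\tilde x$. Since $-2\equiv 2 \pmod 4$, this has no effect on $\beta\nu^*[x] = [\nu y + d_T x]$ (over $\Z_2$, where signs are irrelevant anyway), but it is the kind of sign slip a referee would ask you to fix. The overall strategy, reading the Turner differential off as the halved commutator defect of $\nu$ against $d$ over $\Z_4$, is in the spirit of Shumakovitch's original argument and is a clean way to see why the identity must hold.
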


\subsection{The Lee spectral sequence}

Lee \cite{lee2005endomorphism} defined an endomorphism on Khovanov homology that Rasmussen \cite{rasmussenslice} used to define the $s$ concordance invariant. Let $R$ be either $\Q$ or $\Z_p$ where $p$ is an odd prime. The Lee differential $d_L: C(D;R)\to C(D;R)$ is defined in the same way as the Khovanov differential $d$, but with multiplication and comultiplication given by
\[
	m_L(1 \otimes 1) = 0, \quad m_L(1 \otimes X) = 0, \quad m_L(X \otimes 1) = 0, \quad m_L(X \otimes X) = 1,
\]
\[
	\Delta_L(1) = 0, \quad \Delta_L(X) = 1\otimes 1.
\]
The map $d_L$ satisfies $d^2_L=0$ and $d\circ d_L + d_L \circ d = 0$. The resulting singly graded homology theory is a link invariant, denoted $\operatorname{Lee}^*(L;R)$, and it behaves as follows.
\begin{lem}[Lee]
\label{leehomology}
Let $L$ be an oriented link with $k$ components. Then, $\dim_{R}(\operatorname{Lee}^*(L;R)) = 2^k$. Specifically, if the components of $L$ are $L_1, L_2, \ldots, L_k$, then
\[
	\dim_{R}(\operatorname{Lee}^i(L;R)) = \#\left\{ E \subset \{1,2, \ldots, k \} \left| \ 2 \times \sum_{\ell \in E, m \in E^c} lk(L_\ell,L_m) = i \right. \right\}
\]
where $lk(L_\ell,L_m)$ is the linking number between $L_\ell$ and $L_m$.
\end{lem}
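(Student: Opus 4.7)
The plan is to follow Lee's original approach: diagonalize the Frobenius algebra governing the total differential $d + d_L$, exhibit explicit cycle representatives of $\operatorname{Lee}^*(L;R)$ indexed by orientations of $L$, and read off their homological gradings. Since $R = \Q$ or $R = \Z_p$ with $p$ odd, $2$ is invertible in $R$, so the algebra $R[X]/(X^2-1)$ underlying the deformed complex splits as $R\mathbf{a} \oplus R\mathbf{b}$ via the orthogonal idempotents $\mathbf{a} = (1+X)/2$ and $\mathbf{b} = (1-X)/2$. A short direct computation from the formulas in Section~\ref{Construction} yields $\mathbf{a}^2 = \mathbf{a}$, $\mathbf{b}^2 = \mathbf{b}$, $\mathbf{a}\mathbf{b} = 0$, together with $\Delta(\mathbf{a}) = 2\,\mathbf{a}\otimes\mathbf{a}$ and $\Delta(\mathbf{b}) = -2\,\mathbf{b}\otimes\mathbf{b}$. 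Consequently, in the $\{\mathbf{a},\mathbf{b}\}$-basis, both the merge and split components of $d + d_L$ act diagonally on labelings of circles: any generator in which labels disagree across a saddle is killed.

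Next, for each of the $2^k$ orientations $\mathfrak{o}$ of the underlying unoriented link I would construct the canonical Lee generator $s_{\mathfrak{o}}$. At the vertex $D_{\mathfrak{o}}$ of the cube of resolutions corresponding to the Seifert (oriented) resolution of $(D,\mathfrak{o})$, every resulting Seifert circle inherits an orientation, and I label it $\mathbf{a}$ or $\mathbf{b}$ according to Lee's rule (determined by the parity of the nesting depth relative to a fixed base component, or equivalently by whether the induced boundary orientation of each Seifert circle agrees with a chosen reference). A case check shows that every outgoing edge at $D_{\mathfrak{o}}$ in the cube merges two circles of opposite labels, so by the diagonal form of the total differential $(d + d_L)\,s_{\mathfrak{o}} = 0$; each $s_{\mathfrak{o}}$ is therefore a cycle.

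The main obstacle, which I would cite from \cite{lee2005endomorphism}, is showing that the classes $[s_{\mathfrak{o}}]$ are linearly independent and span $\operatorname{Lee}^*(L;R)$. Lee proves this by introducing a decreasing filtration by polynomial grading whose associated spectral sequence has $E^0$-page equal to the Khovanov complex, and then arguing via a rank and Euler characteristic comparison that the sequence collapses with exactly $2^k$ surviving classes, one for each $s_{\mathfrak{o}}$. This at once yields $\dim_R \operatorname{Lee}^*(L;R) = 2^k$, and is essentially the only non-routine ingredient in the proof.

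Finally, I would pin down the homological grading of each $s_{\mathfrak{o}}$. Write $\mathfrak{o}$ as the orientation obtained from the fixed one by reversing the components indexed by $E \subset \{1,\ldots,k\}$. A direct bookkeeping with the normalization shift in \eqref{KhovanovComplex} places the Seifert vertex $D_{\mathfrak{o}}$ at homological grading $n_-^{\mathfrak{o}}(D) - n_-(D)$, where $n_-^{\mathfrak{o}}(D)$ is the number of crossings of $D$ that are negative with respect to $\mathfrak{o}$. The crossings whose sign differs between the fixed orientation and $\mathfrak{o}$ are precisely those whose two strands lie in components of $E$ and of $E^c$, respectively, so summing the sign changes crossing by crossing gives $n_-^{\mathfrak{o}}(D) - n_-(D) = 2\sum_{\ell \in E,\, m \in E^c} lk(L_\ell, L_m)$. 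This matches the grading formula in the statement and completes the proof.
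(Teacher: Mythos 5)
The paper states this lemma with an attribution to Lee and cites \cite{lee2005endomorphism}; it does not include a proof, so there is no in-paper argument to compare against. Your sketch is a faithful reconstruction of Lee's original proof, and all the concrete calculations you perform check out: the idempotents $\mathbf{a}=(1+X)/2$, $\mathbf{b}=(1-X)/2$ are well-defined since $2\in R^\times$, and a direct computation with the total (Khovanov plus Lee) structure maps indeed gives $\Delta(\mathbf{a})=2\,\mathbf{a}\otimes\mathbf{a}$, $\Delta(\mathbf{b})=-2\,\mathbf{b}\otimes\mathbf{b}$, and $\mathbf{a}\mathbf{b}=0$. Your cycle claim relies on the fact that at the oriented (Seifert) resolution every crossing's two local arcs lie on \emph{distinct} Seifert circles (so every outgoing edge is a merge); this follows from orientability of the Seifert surface and is standard, though it is worth flagging since it is the step that makes the ``opposite labels annihilate under $m$'' argument go through cleanly. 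You correctly identify the genuinely nonroutine part — that the classes $[s_{\mathfrak{o}}]$ are independent and span — and appropriately defer it to Lee's filtration argument. Finally, your grading bookkeeping is right: the Seifert vertex for $\mathfrak{o}$ sits at height $n_-^{\mathfrak{o}}(D)$, so after the $[-n_-]$ shift its homological degree is $n_-^{\mathfrak{o}}(D)-n_-(D)$, and summing the sign flips over crossings between components of $E$ and $E^c$ gives exactly $2\sum_{\ell\in E,\,m\in E^c}lk(L_\ell,L_m)$.
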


\begin{defn}
Let $D$ be a diagram of a link $L$ and $(C(D;R),d)$ be the $R$-Khovanov complex. The spectral sequence $(E_L^r, d_L^r)_{r \ge 1}$ associated to the double complex $(C(D;R),d,d_L)$ is called the $R$-Lee spectral sequence.
\end{defn}

The $R$-Lee spectral sequence satisfies the following properties.

\begin{enumerate}[(L1)]
\item The first page is $(E_L^1,d_L^1) \cong (H(L;R),d_L^*)$ where $d_L^* : H(L;R) \rightarrow H(L;R)$ is the induced map on homology.

 \item Each map $d_L^r$ is a differential of bidegree $(1,4r)$. \label{diffLee}

\item 
The dimension of the infinity page of the Lee spectral sequence is $2^n$ where $n$ is the number of components of $L$. There is a generator $s_{\mathfrak{o}}$ for each orientation $\mathfrak{o}$ of $L$ representing the nontrivial homology classes on the infinity page of the Lee spectral sequence. Let $\overline{\mathfrak{o}}$ be the reverse orientation of $\mathfrak{o}$. The polynomial gradings of $s_{\mathfrak{o}}+s_{\overline{\mathfrak{o}}}$ and $s_{\mathfrak{o}}-s_{\overline{\mathfrak{o}}}$ differ by two, while Lemma \ref{leehomology} implies their homological gradings are the same. In summary, if $P=R\oplus R$, where one copy of $R$ is in bidegree $(0,1)$ and the other is in bidegree $(0,-1)$, then 
\[E_L^{\infty}=\bigoplus_{i=1}^{n-1}P[h_i]\{p_i\}.\]Because $E^1_L\cong H(L;R)$, there are nontrivial $R$ summands in $H^{h_i,p_i\pm 1}(L;R)$. 
Because $E^1_L\cong H(L;R)$, there is a bigraded submodule $S$ of $H(L;R)$ isomorphic to $E_L^\infty$.
See Section 4.4 in \cite{lee2005endomorphism}, Proposition 3.3 in \cite{rasmussenslice}, and Definition 7.1 in \cite{BW} for details.
\label{pawnmoves}


\item The Lee spectral sequence converges to Lee homology $E_L^r \implies \operatorname{Lee}^*(L;R)$, where $$\operatorname{Lee}^i(L;R)=\bigoplus_{j}(E_L^\infty)^{i,j}.$$
\end{enumerate}

\section{The main result}\label{spectralsequences}\label{TheMainResult}

We now prove several lemmas that will lead to the proofs of Theorems \ref{GeneralTheorem} and \ref{GeneralTheorem2}. Throughout the proofs, we take advantage of the properties of the $\Z_2$-Bockstein spectral sequence, Turner spectral sequence, Lee spectral sequence and vertical differential $\nu^*$, described in Section \ref{ComputationalTools}.

Suppose that $H(L)$ is thin over the interval $[i_1,i_2]$, and let $H^{[i_1,i_2]}(L;R)$ denote the direct sum
\[H^{[i_1,i_2]}(L;R) = \bigoplus_{i=i_1}^{i_2} H^{i,*}(L;R).\]
As before, we drop the $R$ from the notation in the case where $R=\Z$. Our first lemma states that all torsion in homological gradings $(i_1,i_2]$ must be supported on the lower diagonal.
\begin{lem}
\label{lowerdiagonal}
If $H(L)$ is thin over $[i_1,i_2]$ where $H^{[i_1,i_2]}(L)$ is supported in bigradings $(i,j)$ with $2i-j=s\pm 1$ for some $s\in \Z$, then any torsion summand of $H^{[i_1,i_2]}(L)$ with homological gradings $i>i_1$ is supported on the lower diagonal in bigrading $(i,2i-s-1)$.
\end{lem}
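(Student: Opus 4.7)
The plan is to use the universal coefficient theorem together with the thinness hypothesis. Recall that for any prime $p$, the universal coefficient theorem yields a (split) short exact sequence
\[
0 \to H^{i,j}(L) \otimes \Z_p \to H^{i,j}(L;\Z_p) \to \operatorname{Tor}(H^{i+1,j}(L),\Z_p) \to 0.
\]
Consequently, a summand of isomorphism class $\Z_{p^r}$ in $H^{i,j}(L)$ contributes a nontrivial $\Z_p$-summand to both $H^{i,j}(L;\Z_p)$ and $H^{i-1,j}(L;\Z_p)$.

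Now suppose, for contradiction, that $H^{[i_1,i_2]}(L)$ contains a torsion summand in bigrading $(i,j)$ with $i_1 < i \le i_2$ lying on the upper diagonal, so that $j = 2i - s + 1$. Since $i \in [i_1,i_2]$, the thinness hypothesis forces all torsion in $H^{i,*}(L)$ to live in bigradings with $2i-j = s\pm 1$, consistent with $j = 2i-s+1$. The Tor term above then produces a nonzero class in $H^{i-1,j}(L;\Z_p)$ for some prime $p$. Because $i - 1 \in [i_1,i_2]$, thinness demands $2(i-1) - j \in \{s-1, s+1\}$, i.e.\ $j \in \{2i-s-3,\; 2i-s-1\}$. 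But we have $j = 2i-s+1$, which is neither of these values; this is the desired contradiction.

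The main step to carry out is thus: \emph{(i)} write down the universal coefficient splitting and identify how integral torsion at $(i,j)$ shows up one homological grading lower over $\Z_p$; \emph{(ii)} observe that the only allowed polynomial gradings at homological grading $i-1$ are $2i-s-3$ and $2i-s-1$; \emph{(iii)} compare with the polynomial grading $2i-s+1$ of the hypothetical upper-diagonal torsion to derive a contradiction. The argument requires $i > i_1$ only so that $i-1$ still belongs to the thin interval $[i_1,i_2]$, which is precisely the hypothesis of the lemma. No obstacle beyond bookkeeping is anticipated; the essential input is just that the thin range is wide enough to see the Tor-shifted copy of any torsion class.
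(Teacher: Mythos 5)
Your proof is correct and follows essentially the same route as the paper's: identify that a torsion summand at $(i,2i-s+1)$ would, via the Tor term of the universal coefficient theorem, force $H^{i-1,2i-s+1}(L;\Z_p)\neq 0$ for some prime $p$, and then observe that this polynomial grading is not one of the two diagonals $\{2i-s-3,\,2i-s-1\}$ allowed at homological grading $i-1\in[i_1,i_2]$. The paper's proof is just a terser statement of the same argument.
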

\begin{proof}
 If $H^{i,2i-s+1}(L)$ has a nontrivial torsion summand for some $i\in(i_1,i_2]$, then the universal coefficient theorem implies that $H^{i-1,2i-s+1}(L;\Z_p)$ is nontrivial for some $p$, contradicting the fact that $H(L)$ is thin over $[i_1,i_2]$. Therefore all torsion in homological gradings $(i_1,i_2]$ appears in bigradings of the form $(i,2i-s-1)$.
\end{proof}

Our next lemma gives a sufficient condition to ensure that $H^{[i_1,i_2]}(L;\Z)$ has no odd torsion.
\begin{lem}
\label{oddtorsion}
Suppose that $H(L)$ is thin over $[i_1,i_2]$, all Lee differentials are zero in homological grading $i_1-1$, and that 
\[\dim_{\Q}H^{i_1,*}(L;\Q)=\dim_{\Z_p} H^{i_1,*}(L;\Z_p)\]
for each odd prime $p$. Then $H^{[i_1,i_2]}(L;\Z)$ contains no torsion of odd order.
\end{lem}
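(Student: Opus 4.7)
The strategy is to compare the Lee spectral sequences over $\Q$ and over $\Z_p$ for each odd prime $p$, exploiting that $\operatorname{Lee}(L;R)$ has the same distribution of generators across bigradings regardless of the field $R$ (property (L\ref{pawnmoves})), so that each $(E^\infty_L)^{i,j}$ is independent of $R$.

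Fix an odd prime $p$. Let $a_i = \operatorname{rk} H^{i, 2i-s+1}(L)$ and $b_i = \operatorname{rk} H^{i, 2i-s-1}(L)$ be the free ranks on the upper and lower diagonals, and let $\alpha_i,\beta_i$ denote the respective numbers of $\Z_{p^r}$ summands. I would first apply the universal coefficient theorem to the hypothesis $\dim_{\Q}H^{i_1,*}(L;\Q)=\dim_{\Z_p} H^{i_1,*}(L;\Z_p)$ to conclude $\alpha_{i_1}=\beta_{i_1}=\beta_{i_1+1}=0$; combined with Lemma \ref{lowerdiagonal}, this yields $\alpha_i=0$ for all $i\in[i_1,i_2]$, reducing the problem to showing $\beta_i=0$ for all $i\in[i_1,i_2]$.

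Because $d_L^r$ has bidegree $(1,4r)$, a direct diagonal check shows that inside the thin region the only possibly nontrivial Lee differential is $d_L^1$ from the lower diagonal at $i$ to the upper diagonal at $i+1$; all other Lee differentials land on (or emanate from) bigradings lying off the thin diagonals, where the homology vanishes, and differentials out of grade $i_1-1$ vanish by hypothesis. Consequently, for $i\in[i_1,i_2-1]$ the infinity-page dimensions take the form
\[
(E^\infty_L)^{i,2i-s+1} = a_i^R - r_{i-1}^R, \qquad (E^\infty_L)^{i,2i-s-1} = b_i^R - r_i^R,
\]
where $r_i^R$ is the rank over $R$ of $d_L^1$ between the relevant spaces (with $r_{i_1-1}^R=0$). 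Equating these $R$-independent dimensions over $\Q$ and $\Z_p$ and applying the universal coefficient identities $a_i^p = a_i + \alpha_i + \beta_{i+1}$ and $b_i^p = b_i + \beta_i$ (the latter using that the $(s+3)$-diagonal at $i+1\le i_2$ is empty) yields, for each $i\in[i_1,i_2-1]$,
\[
r_{i-1}^p - r_{i-1} = \beta_{i+1} \qquad \text{and} \qquad r_i^p - r_i = \beta_i.
\]

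Eliminating the rank differences produces the two-step recursion $\beta_j = \beta_{j+2}$ for $j\in[i_1,i_2-2]$, which together with the base cases $\beta_{i_1}=\beta_{i_1+1}=0$ propagates to $\beta_j=0$ for every $j\in[i_1,i_2]$, completing the proof. The main technical point will be carefully justifying the formulas $a_i^\infty = a_i-r_{i-1}$ and $b_i^\infty = b_i - r_i$ throughout $[i_1,i_2-1]$: the hypothesis on Lee differentials at $i_1-1$ kills incoming differentials at $i_1$, while the thin hypothesis forces the remaining higher Lee differentials between interior grades to land in vanishing bigradings; outgoing differentials at $i_2$ may be nonzero, but since the recursion already determines $\beta_{i_2}$ from $\beta_{i_2-2}$, this does not affect the conclusion.
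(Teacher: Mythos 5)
Your proposal is correct and takes essentially the same approach as the paper: both compare the $R$-independent $E_L^\infty$-page dimensions over $\Q$ and $\Z_p$ via the universal coefficient theorem and the observation that, within the thin region, only $d_L^1$ can act (from the lower diagonal at $i$ to the upper diagonal at $i+1$). The paper phrases this propagation as a contradiction argument descending by two homological gradings from a hypothetical $\Z_{p^r}$ summand until it hits $i_1$, whereas you package the identical computation as the recursion $\beta_j=\beta_{j+2}$ bootstrapped from the base cases $\beta_{i_1}=\beta_{i_1+1}=0$.
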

\begin{proof} 
Let $R$ be $\mathbb{Q}$ or $\Z_p$ for an odd prime $p$. Since $H(L)$ is thin over $[i_1,i_2]$, it follows that there is an $s\in\Z$ such that $H^{[i_1,i_2]}(L)$ is supported in bigradings $(i,j)$ with $2i-j=s\pm 1$.  Lemma \ref{leehomology} implies that the homological gradings of $E^\infty_L$ are determined by the linking numbers of the components of $L$, and thus do not depend on $R$. Because $H(L)$ is thin over $[i_1,i_2]$, Property (L\ref{pawnmoves}) implies that in each homological grading $i\in[i_1,i_2]$ where $E^\infty_L$ is nontrivial, its polynomial gradings are $2i-s-1$ and $2i-s+1$, and hence do not depend on $R$. Therefore, for each $i\in[i_1,i_2]$,
\begin{equation}
    \label{InfintyPage}
\dim_R(E_L^\infty)^{i,2i-s-1}(L;R)=\dim_R(E_L^\infty)^{i,2i-s+1}(L;R).
\end{equation}

We show that if $i\in[i_1,i_2]$, then there cannot be any torsion in $H^{i,*}(L)$ of the form $\Z_{p^r}$ for an odd prime $p$. By way of contradiction suppose that for some $i\in[i_1,i_2]$, the group $H^{i,2i-s-1}(L)$ contains a torsion summand of the form $\Z_{p^r}$ for some $r>0$. The universal coefficient theorem implies that
\begin{align*}
    \dim_{\Q} H^{i,2i-s-1}(L;\Q)< & \; \dim_{\Z_p} H^{i,2i-s-1}(L;\Z_p)~\text{and}\\
      \dim_{\Q} H^{i-1,2i-s-1}(L;\Q)< & \; \dim_{\Z_p} H^{i-1,2i-s-1}(L;\Z_p).
\end{align*}
If $i_1=i$ or $i_1=i-1$, then we have a contradiction, and so we assume that $i_1<i-1$. Equation \eqref{InfintyPage} and the previous inequality imply that the rank of the bidegree $(1,4)$ Lee map in bigrading $(i-2,2i-s-5)$ over $\Z_p$ is greater than its rank over $\Q$. If $i_1=i-2$, then the Lee differential  $(d_L^r)^{i-3,j}$ is trivial on all pages $r>0$ and for all $j$ by assumption, and if $i_1<i-2$, then the Lee differential $(d_L^r)^{i-3,j}$ is trivial on all pages $r>0$ and for all $j\leq 2i-s-9$ because $H(L)$ is thin over $[i_1,i_2]$. Consequently,
\[\dim_{\Q} H^{i-2,2i-s-5}(L;\Q) < \dim_{\Z_p} H^{i-2,2i-s-5}(L;\Z_p).\]
Since $H(L)$ is thin over $[i_1,i_2]$, it follows that $H^{i-1,2i-s-5}(L)=0$, and in particular, there is no $\Z_{p^r}$ torsion summand in bigrading $(i-1,2i-s-5)$. Thus the universal coefficient theorem implies there is a $\Z_{p^{r'}}$ torsion summand in bigrading $(i-2,2i-s-5)$ for some $r'>0$. In summary, a $\Z_{p^r}$ torsion summand in homological grading $i$ induces a dimension inequality in homological gradings $i, i-1$, and $i-2$, and it induces a $\Z_{p^{r'}}$ summand in homological grading $i-2$. Since $i\in[i_1,i_2]$ is arbitrary, repeating this argument for each new torsion summand implies that 
\[\dim_{\Q} H^{i_1,*}(L;\Q) < \dim_{\Z_p} H^{i_1,*}(L;\Z_p),\]
which is a contradiction.
\end{proof}

Our next lemma states that in a thin region, the induced Turner map $d_T^*$ has the same rank on the lower and upper diagonals. 
\begin{lem}
\label{Turnerrank}
Suppose that $H(L)$ is thin over $[i_1,i_2]$ where $H^{[i_1,i_2]}(L)$ is supported in bigradings $(i,j)$ with $2i-j=s\pm 1$ for some $s\in\mathbb{Z}$. If $i\in[i_1,i_2)$, then
\[\rk (d_T^*)^{i,2i-s-1} = \rk (d_T^*)^{i,2i-s+1}.\]
\end{lem}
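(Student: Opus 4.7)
The plan is to reduce the comparison of the two Turner ranks to a single Bockstein rank via Shumakovitch's identity $d_T^* = d_B^1 \circ \nu^* + \nu^* \circ d_B^1$ (Lemma \ref{TBV}), exploiting that thinness forces one of the two summands to vanish on each diagonal.

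First I would observe that for $i \in [i_1,i_2]$, the vertical differential restricted to homological grading $i$ reduces to the two-term complex
\[
0 \longrightarrow H^{i,2i-s-1}(L;\Z_2) \xrightarrow{\nu^*} H^{i,2i-s+1}(L;\Z_2) \longrightarrow 0,
\]
because the adjacent polynomial gradings $2i-s\pm 3$ lie off the two thin diagonals and hence are zero. Acyclicity of $\nu^*$ (property V\ref{viso}) then upgrades this to the statement that $\nu^*$ is an isomorphism from the lower to the upper diagonal at every $i \in [i_1,i_2]$.

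Next I would track bidegrees to show that in the thin range the two ``unwanted'' summands in Shumakovitch's identity drop out. Since $d_B^1$ has bidegree $(1,0)$, a class $x \in H^{i,2i-s-1}(L;\Z_2)$ is mapped into bigrading $(i+1,2i-s-1)$, whose diagonal value is $s+3$; for $i+1 \in [i_1,i_2]$ this is outside the thin diagonals, so $d_B^1$ kills the lower diagonal. Symmetrically, $\nu^*$, of bidegree $(0,2)$, sends a class $y \in H^{i,2i-s+1}(L;\Z_2)$ into bigrading $(i,2i-s+3)$, which has diagonal value $s-3$ and is again off the thin diagonals, so $\nu^*$ kills the upper diagonal for $i \in [i_1,i_2]$. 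Combining these vanishings with Lemma \ref{TBV} gives
\[
d_T^*\big|_{2i-j=s+1} \;=\; d_B^1 \circ \nu^*, \qquad d_T^*\big|_{2i-j=s-1} \;=\; \nu^* \circ d_B^1,
\]
for $i\in[i_1,i_2)$.

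The conclusion then follows because the $\nu^*$ factor in each composition is an isomorphism by the first step (at homological grading $i$ for the lower diagonal, at $i+1$ for the upper diagonal), so both ranks equal $\rk\bigl(d_B^1 \colon H^{i,2i-s+1}(L;\Z_2) \to H^{i+1,2(i+1)-s-1}(L;\Z_2)\bigr)$. The principal thing to keep straight is the interval bookkeeping: the hypothesis $i \in [i_1,i_2)$ places both $i$ and $i+1$ in $[i_1,i_2]$, which is precisely what is needed to invoke the $\nu^*$-isomorphism at both grading levels and to kill the off-diagonal Bockstein targets. No serious obstacle is expected beyond this careful grading arithmetic.
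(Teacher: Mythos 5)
Your proof is correct and uses the same two central ingredients as the paper's proof of this lemma --- Shumakovitch's identity (Lemma \ref{TBV}) and the observation that thinness forces $\nu^*$ to be an isomorphism from the lower to the upper diagonal in the thin range --- but it extracts the rank equality by a somewhat different mechanism. The paper applies $\nu^*$ to both sides of $d_T^* = d_B^1\circ\nu^* + \nu^*\circ d_B^1$ and invokes $\nu^*\circ\nu^* = 0$ to obtain the commutation relation $\nu^*\circ d_T^* = \nu^*\circ d_B^1 \circ \nu^* = d_T^*\circ\nu^*$, which holds \emph{unconditionally}, with no thinness hypothesis; the rank equality then drops out immediately by conjugating $d_T^*$ by the isomorphism $\nu^*$. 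You instead invoke thinness a second time, through the degree/support analysis, to annihilate one of the two summands of Shumakovitch's identity in each bidegree, so that $d_T^*$ factors as $d_B^1\circ\nu^*$ on the lower diagonal and as $\nu^*\circ d_B^1$ on the upper diagonal, and you then match both ranks to that of the same Bockstein component $d_B^1\colon H^{i,2i-s+1}(L;\Z_2)\to H^{i+1,2(i+1)-s-1}(L;\Z_2)$. Both arguments are valid and your grading bookkeeping is accurate; the paper's route is slightly slicker in that it confines the use of the thinness hypothesis to the single invertibility statement for $\nu^*$, whereas your version replaces the nilpotence $\nu^*\circ\nu^*=0$ with a support argument that itself relies on thinness.
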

\begin{proof} Lemma \ref{TBV} states that $d_T^*=d_B^1\circ \nu^*+\nu^*\circ d_B^1$. Because $\nu^*\circ \nu^*=0$, it follows that
\[\nu^*\circ d_T^* = \nu^*\circ d_B^1 \circ \nu^* = d_T^*\circ \nu^*.\]
Property (V\ref{viso}) and the fact that $H(L)$ is thin over $[i_1,i_2]$ imply that $\nu^*$ is an isomorphism in homological gradings $[i_1,i_2]$, which then implies the desired result.
\end{proof}

Suppose that $H(L)$ is thin over $[i_1,i_2]$ where $H^{[i_1.i_2]}(L)$ is supported in bigradings $(i,j)$ with $2i-j = s\pm 1$ for some $s\in\mathbb{Z}$. Also, suppose that all Lee differentials in homological grading $i_1-1$ are trivial and that $\dim_{\Q}H^{i_1,*}(L;\Q) = \dim_{\Z_p} H^{i_1,*}(L;\Z_p)$. Lemmas \ref{lowerdiagonal} and \ref{oddtorsion} imply that the upper diagonal of $H^{[i_1,i_2]}(L)$ is torsion-free and that all torsion on the lower diagonal is of the form $\Z_{2^r}$ for various different values of $r$. Therefore, $H^{i,2i-s+1}(L) \cong \Z^{k_i^+}$ and 
\[ H^{i,2i-s-1} (L)\cong \Z^{k_i^-} \oplus \Z_{2^{r_1}}\oplus \cdots \oplus \Z_{2^{r_{\ell_i}}}\]
for some $k_i^+, k_i^-, \ell_i,r_1,\dots, r_{\ell_i}\in\mathbb{Z}$ and for each $i\in[i_1,i_2]$. Figure \ref{thinlabels} represents the different summands in the thin region. Lemmas \ref{Bar-Natan dimension} and \ref{leehomology}, together with (T\ref{pawnmovesturner}) and (L\ref{pawnmoves}) imply that $\dim_{\Q} (E_L^\infty)^{i,2i-s\pm1}=\dim_{\Z_2}(E_T^\infty)^{i,2i-s\pm 1}$ when $i\in[i_1,i_2]$. For each $i\in[i_1,i_2]$, let $n_i = \dim_{\Q} (E_L^\infty)^{i,2i-s\pm1} =\dim_{\Z_2}(E_T^\infty)^{i,2i-s\pm 1}$. The notation established in this paragraph will be used in the statements and proofs of Lemmas \ref{Turner2} and \ref{TurnerLeeTorsion}.

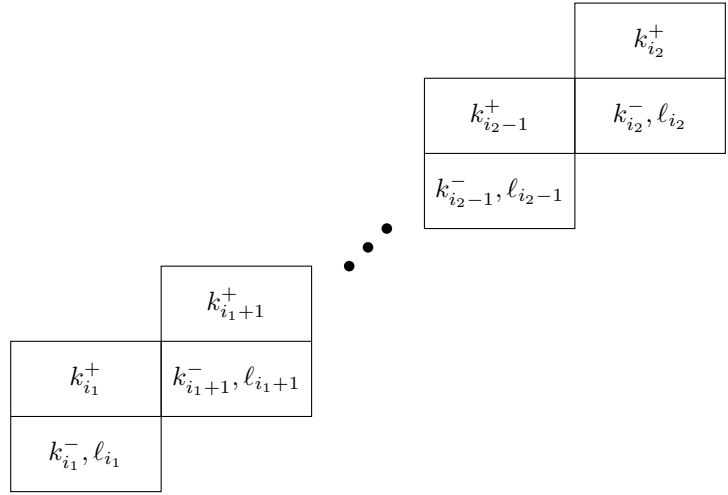
\begin{figure}[ht]
\[\begin{tikzpicture}[scale = 1]

   \draw (0,0) rectangle (2,2);
    \draw (0,1) -- (2,1);
    \draw (2,1) rectangle (4,3);
    \draw (2,2) -- (4,2);
    \draw (1, 0.5) node{$k_{i_1}^-,\ell_{i_1}$};
    \draw (1,1.5) node{$k_{i_1}^+$};
    \draw (3,1.5) node{$k_{i_1+1}^-,\ell_{i_1+1}$};
    \draw (3,2.5) node{$k_{i_1+1}^+$};

    \fill (4.75,3.25) circle (2pt);
    \fill (4.5,3.0) circle (2pt);
    \fill (5,3.5) circle (2pt);

    \begin{scope}[xshift = 5.5cm, yshift = 3.5cm]
   \draw (0,0) rectangle (2,2);
    \draw (0,1) -- (2,1);
    \draw (2,1) rectangle (4,3);
    \draw (2,2) -- (4,2);
    \draw (1, 0.5) node{$k_{i_2-1}^-,\ell_{i_2-1}$};
    \draw (1,1.5) node{$k_{i_2-1}^+$};
    \draw (3,1.5) node{$k_{i_2}^-,\ell_{i_2}$};
    \draw (3,2.5) node{$k_{i_2}^+$};
  
    \end{scope}

\end{tikzpicture}\]
\caption{A depiction of the summands of $H^{[i_1,i_2]}(L)$. The entry $k_{i}^\pm$ represents the summand $\Z^{k_{i}^\pm}$ in bigrading $(i,2i-s\pm1)$. The entry $\ell_i$ represents $\ell_i$ summands of the form $\Z_{2^r}$ for various values of $r$ in bigrading $(i,2i-s-1)$. Since $H^{i_1,*}(L)$ has no torsion, $\ell_{i_1}=0$.}
\label{thinlabels}
\end{figure} 

Since the Turner and Lee spectral sequences use coefficients that are fields, it follows that the infinity page of each spectral sequence is determined by the first page and the ranks of certain differentials in the respective spectral sequences. Specifically, for any $i$ and $j$ in $\Z$, 
\begin{align}
\label{TurnerInfinity}
\dim_{\Z_2} (E_T^\infty)^{i,j} = & \; \dim_{\Z_2} (E_T^1)^{i,j} - \sum_{r=1}^\infty \left(\rk (d_T^r)^{i,j} + \rk (d_T^r)^{i-1,j-2r}\right)~\text{and}\\
\label{LeeInfinity}
\dim_\Q (E_L^\infty)^{i,j} = & \; \dim_{\Q} (E_L^1)^{i,j} - \sum_{r=1}^\infty \left(\rk (d_L^r)^{i,j} + \rk (d_L^r)^{i-1,j-4r}\right).
\end{align}
The proofs of Lemmas \ref{Turner2} and \ref{TurnerLeeTorsion} below make use of these formulas along with the fact that most of the maps in the infinite sums above are trivial for grading reasons.

Our next lemma states that the Turner spectral sequence collapses at the second page in a thin region if all incoming Turner differentials are trivial. If the entire homology is thin, then this lemma gives a proof of property (T\ref{Turnercollapse}).

\begin{lem}
\label{Turner2}
Suppose that $H(L)$ is thin over $[i_1,i_2]$ for integers $i_1$ and $i_2$ and that $(d_T^r)^{i_1-1,j}=0$ for all $r\geq 1$ and for all $j\in\Z$. If $i\in[i_1,i_2)$, then $(d_T^r)^{i,j}=0$ for all $r\geq 2$ and for all $j\in\Z$.
\end{lem}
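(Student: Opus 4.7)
The plan is to combine a bidegree analysis with a dimension count on the infinity page of the Turner spectral sequence, exploiting Lemma~\ref{Turnerrank}, the acyclicity of $\nu^*$, and the pawn--move structure from property~(T\ref{pawnmovesturner}).

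First I would reduce to a single family of candidate higher differentials. For $i\in[i_1,i_2)$, thinness forces $(E_T^r)^{i,j}=0$ unless $j=2i-s\pm 1$, so only $(d_T^r)^{i,2i-s-1}$ and $(d_T^r)^{i,2i-s+1}$ need be analyzed. Since $d_T^r$ has bidegree $(1,2r)$, the image of $(d_T^r)^{i,2i-s+1}$ lies outside the thin diagonals at height $i+1\in(i_1,i_2]$ for every $r\geq 2$, and the image of $(d_T^r)^{i,2i-s-1}$ lies outside those diagonals for $r\geq 3$. The only possibly nontrivial higher differential is therefore
\[
(d_T^2)^{i,2i-s-1}\colon (E_T^2)^{i,2i-s-1}\longrightarrow (E_T^2)^{i+1,2(i+1)-s+1},
\]
and the same bidegree analysis, together with the hypothesis at height $i_1-1$ (which rules out higher differentials leaking into the thin region from below), shows the Turner spectral sequence stabilizes in the thin region by page~$3$.

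Setting $L_i=\dim_{\Z_2}(E_T^2)^{i,2i-s-1}$, $U_i=\dim_{\Z_2}(E_T^2)^{i,2i-s+1}$, and $\alpha_i=\rk(d_T^2)^{i,2i-s-1}$, the stabilization together with (T\ref{pawnmovesturner}) translates into the bookkeeping identities
\[
L_i-\alpha_i=n_i \text{ for } i\in[i_1,i_2-1],\qquad U_i-\alpha_{i-1}=n_i \text{ for } i\in[i_1+1,i_2],
\]
where the hypothesis at height $i_1-1$ also yields $U_{i_1}=n_{i_1}$ (no incoming contribution). I would next establish that $L_i=U_i$ for every $i\in[i_1,i_2)$ by expanding both sides and matching three pairs of summands: the equality $\dim_{\Z_2}H^{i,2i-s-1}(L;\Z_2)=\dim_{\Z_2}H^{i,2i-s+1}(L;\Z_2)$ follows from acyclicity of $\nu^*$ combined with thinness, which forces $\nu^*$ to be an isomorphism from the lower to the upper diagonal (as used in the proof of Lemma~\ref{Turnerrank}); equality of the outgoing ranks $\rk(d_T^1)^{i,2i-s\pm 1}$ is Lemma~\ref{Turnerrank} at height $i$; and equality of the incoming ranks $\rk(d_T^1)^{i-1,2(i-1)-s\pm 1}$ follows either from Lemma~\ref{Turnerrank} at height $i-1$ (when $i>i_1$) or from both ranks being zero by hypothesis (when $i=i_1$).

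A short induction on $i$ then finishes the argument. The base case $i=i_1$ combines $U_{i_1}=n_{i_1}$ with $L_{i_1}=U_{i_1}$ to give $L_{i_1}-\alpha_{i_1}=n_{i_1}$, so $\alpha_{i_1}=0$. The inductive step assumes $\alpha_{i-1}=0$, which forces $U_i=n_i$, hence $L_i=n_i$, hence $\alpha_i=0$. The main obstacle is verifying $L_{i_1}=U_{i_1}$ at the left boundary: Lemma~\ref{Turnerrank} is not available at height $i_1-1$, and it is precisely the stronger hypothesis that \emph{all} higher Turner differentials at height $i_1-1$ vanish (not just $d_T^1$) that removes the incoming terms and anchors the induction.
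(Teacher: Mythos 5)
Your argument is correct and is essentially the paper's proof: the same bidegree analysis isolates $(d_T^2)^{i,2i-s-1}$ as the only possible higher differential in the thin region, and the same three ingredients --- property (T\ref{pawnmovesturner}) for the infinity page, acyclicity of $\nu^*$ for the $E^1$ page, and Lemma \ref{Turnerrank} together with the hypothesis at $i_1-1$ for the $d_T^1$ ranks --- drive the dimension count. The paper packages your induction as a minimal-counterexample contradiction, but the bookkeeping is identical; the only cosmetic slip is that your identity $U_i-\alpha_{i-1}=n_i$ is not justified at $i=i_2$ (an outgoing $d_T^2$ could exit the thin region there), though that case is never used in your induction.
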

\begin{proof}
Since $H(L)$ is thin over $[i_1,i_2]$, there is an $s\in\Z$ such that $H^{[i_1,i_2]}(L)$ is supported in bigradings $(i,j)$ with $2i-j=s\pm 1$. 
Also, since $H(L)$ is thin over $[i_1,i_2]$ and $d_T^r$ has bidegree $(1,2r)$, it follows that if $r\geq 3$, then the map $(d_T^r)^{i,j}=0$ for all $i\in[i_1,i_2)$ and for all $j\in\Z$ because either its domain or codomain is trivial. 
Suppose that the Turner differential on the second page is nonzero somewhere in the thin region. 
Let $i$ be the minimum homological grading in $[i_1,i_2)$ such that $(d_T^2)^{i,j}$ is nonzero for some $j$. 
For grading reasons, it must be the case that the domain of $(d_T^2)^{i,j}$ is on the bottom diagonal while the codomain of $(d_T^2)^{i,j}$ is on the top diagonal. 
Thus $j=2i-s-1$, and Equation \ref{TurnerInfinity} implies
\begin{align*}
\dim (E_T^\infty)^{i,2i-s-1} = & \; \dim_{\Z_2} (E_T^1)^{i,2i-s-1} - \rk (d_T^1)^{i-1,2i-s-3} - \rk (d_T^1)^{i,2i-s-1} - \rk (d_T^2)^{i,2i-s-1}~\text{and}\\
\dim (E_T^\infty)^{i,2i-s+1} = & \; \dim_{\Z_2} (E_T^1)^{i,2i-s+1} - \rk(d_T^1)^{i-1,2i-s-1} - \rk (d_T^1)^{i,2i-s+1},
\end{align*}
where in the second equation, there is no $\rk (d_T^2)^{i-1,2i-s-3}$ term because $i$ is the minimum homological grading in $[i_1,i_2)$ where $(d_T^2)^{i,j}$ is nontrivial. Property (T\ref{pawnmovesturner}) implies that $\dim(E_T^\infty)^{i,2i-s-1} = \dim(E_T^\infty)^{i,2i-s+1}$, and since $(E_T^1)^{i,j}\cong H^{i,j}(L;\Z_2)$, property (V\ref{viso}) implies that $\dim_{\Z_2} (E_T^1)^{i,2i-s-1} = \dim_{\Z_2} (E_T^1)^{i,2i-s+1}.$

If $i=i_1$, then $\rk (d_T^1)^{i-1,2i-s-3}= \rk(d_T^1)^{i-1,2i-s-1} =0$ by assumption, and $\rk (d_T^1)^{i_1,2i_1-s-1} = \rk (d_T^1)^{i_1,2i_1-s+1}$ by Lemma \ref{Turnerrank}. 
If $i>i_1$, then Lemma \ref{Turnerrank} implies that $\rk (d_T^1)^{i-1,2i-s-3} = \rk(d_T^1)^{i-1,2i-s-1}$ and $\rk (d_T^1)^{i,2i-s-1} = \rk (d_T^1)^{i,2i-s+1}$. Since $\rk (d_T^2)^{i,2i-s-1} > 0$ by assumption, we have a contradiction in either case. 
Therefore $(d_T^2)^{i,j}$ is zero for all $i\in[i_1,i_2)$ and for all $j\in\Z$.
\end{proof}

Our next lemma shows a relationship between the ranks of the Turner and Lee differentials in a thin region and also between the number of torsion summands of the form $\Z_{2^r}$ for various values of $r$.
\begin{lem}
\label{TurnerLeeTorsion}
Suppose that a link $L$ satisfies:
\begin{enumerate}
    \item $H(L)$ is thin over $[i_1,i_2]$ for integers $i_1$ and $i_2$,
    \item $H^{i_1,*}(L)$ is torsion-free, and
    \item all Lee and Turner differentials are zero in homological grading $i_1-1$ on every page of the respective spectral sequences.
\end{enumerate}
Then for each $i\in[i_1,i_2)$,
\[\rk (d_T^*)^{i,2i-s-1} = \rk (d_L^*)^{i,2i-s-1} = \ell_{i+1},\]
where $\ell_{i+1} = \dim_{\Z_2} H^{i+1,2i-s+1}(L)\otimes\Z_2 - \rk H^{i+1,2i-s+1}(L)$  is the number of summands of the form $\Z_{2^r}$ for various values of $r$ in $H^{i+1,2i-s+1}(L)$. 
\end{lem}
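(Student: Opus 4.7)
The plan is to induct on $i\in[i_1,i_2)$, simultaneously establishing $a_i:=\rk(d_T^*)^{i,2i-s-1}=\ell_{i+1}$ and $c_i:=\rk(d_L^*)^{i,2i-s-1}=\ell_{i+1}$, with base case $i=i_1$. Write $\alpha_i=\rk H^{i,2i-s+1}(L)$ and $\beta_i=\rk H^{i,2i-s-1}(L)$, and set $a_{i_1-1}=c_{i_1-1}=0$ by hypothesis (3) and $\ell_{i_1}=0$ by hypothesis (2). The first task is to reduce to the first-page differentials on the thin region. Lemma \ref{Turner2}, which applies thanks to hypothesis (3), forces $(d_T^r)^{i,j}=0$ for $r\geq 2$ and $i\in[i_1,i_2)$. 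For the Lee spectral sequence, a direct bidegree check using $|d_L^r|=(1,4r)$ shows that for $r\geq 2$ any differential between bigradings on the thin diagonals in $[i_1,i_2]$ has either source or target off those diagonals, hence is zero on $E_L^1$ and on every subsequent page. The surviving $d_T^*$ preserves the thin diagonals, while $d_L^*$ maps the lower diagonal of $i$ to the upper diagonal of $i+1$; the upper diagonal has zero outgoing Lee differential (its target lies off the thin region) and the lower diagonal has zero incoming Lee differential (either its source lies off the thin region, or it lies in grading $i_1-1$ where hypothesis (3) kills it).

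Next I compute the relevant dimensions using the universal coefficient theorem. Lemma \ref{lowerdiagonal} together with hypothesis (2) forces the upper diagonal in $[i_1,i_2]$ to be torsion-free, while the bigrading $(i+1,2i-s-1)$ lies off the thin diagonals so $H^{i+1,2i-s-1}(L)=0$. Therefore
\[\dim_{\Z_2}H^{i,2i-s-1}(L;\Z_2)=\beta_i+\ell_i,\qquad \dim_{\Z_2}H^{i,2i-s+1}(L;\Z_2)=\alpha_i+\ell_{i+1},\]
and the corresponding $\Q$-dimensions are just $\beta_i$ and $\alpha_i$. Substituting these into \eqref{TurnerInfinity} and \eqref{LeeInfinity}, combined with the identifications $\dim(E_T^\infty)^{i,2i-s\pm 1}=\dim(E_L^\infty)^{i,2i-s\pm 1}=n_i$ and Lemma \ref{Turnerrank} (which equates the outgoing Turner ranks on the two diagonals), yields for each $i\in[i_1,i_2)$ the linear system
\[a_i+a_{i-1}=\beta_i+\ell_i-n_i=\alpha_i+\ell_{i+1}-n_i,\qquad c_i=\beta_i-n_i,\qquad c_{i-1}=\alpha_i-n_i.\]

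Finally I induct. At the base $i=i_1$, the Lee relation $c_{i_1-1}=0=\alpha_{i_1}-n_{i_1}$ forces $n_{i_1}=\alpha_{i_1}$, and then the Turner system with $\ell_{i_1}=a_{i_1-1}=0$ collapses to $a_{i_1}=c_{i_1}=\beta_{i_1}-\alpha_{i_1}=\ell_{i_1+1}$. For the inductive step, assuming $a_{i-1}=c_{i-1}=\ell_i$, the Lee equation $c_{i-1}=\alpha_i-n_i$ produces $\alpha_i=n_i+\ell_i$; substituting this into the difference $\ell_{i+1}-\ell_i=\beta_i-\alpha_i$ extracted from the two Turner equations gives $\ell_{i+1}=\beta_i-n_i$, which equals $c_i$ directly and equals $a_i$ after canceling $\ell_i$ against the inductive hypothesis in the first relation. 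The main obstacle is the setup in step one: cleanly verifying that every higher-page Lee and Turner differential vanishes on the thin region (with Lemma \ref{Turner2} doing the real work on the Turner side, where $d_T^2$ could a priori map lower-to-upper within the thin region) and that the Tor contributions in the universal coefficient theorem vanish in exactly the right bigradings. Once these reductions are in place, the linear system closes the induction at once.
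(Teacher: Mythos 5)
Your proof is correct and follows essentially the same strategy as the paper's: induct on $i$, compare the $E^\infty$ dimensions of the Turner and Lee spectral sequences (both equal to $n_i$ on each diagonal) against the $E^1$ dimensions via the universal coefficient theorem, invoke Lemma \ref{Turner2} and a bidegree check to reduce to first-page differentials, and use Lemma \ref{Turnerrank} to equate the Turner ranks on the two diagonals. Your reformulation as a closed linear system in $a_i, c_i, \alpha_i, \beta_i, \ell_i, n_i$ is a cleaner way of organizing the same bookkeeping, but it uses the same ingredients and the same inductive structure as the paper's argument.
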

\begin{proof}
Since $H(L)$ is thin over $[i_1,i_2]$, there is an $s\in\Z$ such that $H^{[i_1,i_2]}(L)$ is supported in bigradings $(i,j)$ with $2i-j=s\pm 1$. We proceed by induction on the homological grading $i$.

For the base case, we prove the desired statement when $i=i_1$. The left side of Figure \ref{mapfigure} depicts the maps involved in the base case. All incoming Lee maps are trivial, and thus Equation \ref{LeeInfinity} implies $\rk H^{i_1,2i_1-s+1}(L) = k_{i_1}^+=n_{i_1} = \dim_{\Q} (E_L^\infty)^{i_1,2i_1-s+1}$. The dimension of $H^{i_1,2i_1-s+1}(L;\Z_2)$ is $n_{i_1}+\ell_{i_1+1}$.  Equation \ref{TurnerInfinity} implies
\[n_ {i_1} = \dim_{\Z_2} (E_T^\infty)^{i_1,2i_1-s+1} = n_{i_1} + \ell_{i_1+1} - \rk (d_T^*)^{i_1,2i_1-s+1},\]
and thus $\rk(d_T^*)^{i_1,2i_1-s+1} = \ell_{i_1+1}$. Lemma \ref{Turnerrank} then implies $\rk(d_T^*)^{i_1,2i_1-s-1}=\rk(d_T^*)^{i_1,2i_1-s+1} = \ell_{i_1+1}$. 

Since $\dim_\Q (E_L^\infty)^{i_1,2i_1-s-1} = n_{i_1}$ and all Lee maps in homological grading $i_1-1$ are trivial, Equation \ref{LeeInfinity} implies that $\rk (d_L^*)^{i_1,2i_1-s-1} = \rk H^{i_1,2i_1-s-1}(L) - \dim_{\Q} (E_L^\infty)^{i_1,2i_1-s-1} = k_{i_1}^- - n_{i_1}$. Because $H^{i_1,*}(L)$ is torsion free, $\ell_{i_1}=0$ , and thus $\dim_{\Z_2}H^{i_1,2i_1-s-1}(L;\Z_2) = \dim_{\Z_2} (E_T^1)^{i_1,2i_1-s-1} = k_{i_1}^-$. Lemma \ref{Turner2} implies that $(d_T^2)^{i_1,2i_1-s-1} = 0$, and since all Turner maps in homological grading $i_1-1$ are trivial, Equation \ref{TurnerInfinity} implies that
$\rk (d_T^*)^{i_1,2i_1-s-1} = \dim_{\Z_2}H^{i_1,2i_1-s-1}(L;\Z_2) - \dim_{\Z_2} (E_T^\infty)^{i_1,2i_1-s-1} = k_{i_1}^- - n_{i_1}.$ Therefore $\rk (d_T^*)^{i_1,2i_1-s-1} = \rk (d_L^*)^{i_1,2i_1-s-1}$, which completes the base case.

For the inductive step, let $i\in (i_1,i_2)$ and assume that $\rk(d_T^*)^{i-1,2i-s-3}=\rk(d_L^*)^{i-1,2i-s-3}=\ell_i$. The right side of Figure \ref{mapfigure} depicts the maps involved in the inductive step. 
Because the Lee spectral sequence collapses at the second page in bigrading $(i,2i-s+1)$, Equation \ref{LeeInfinity} implies that $\dim_{\Q} (E_L^\infty)^{i,2i-s+1} = n_i = k_i^+ - \rk (d_L^*)^{i-1,2i-s-3}$. 
Similarly, since Lemma \ref{Turner2} implies that the Turner spectral sequence collapses at the second page in bigrading $(i,2i-s+1)$, Equation \ref{TurnerInfinity} implies that $\dim_{\Z_2} (E_T^\infty)^{i,2i-s+1}=n_i = k_i^+ + \ell_{i+1} - \rk(d_T^*)^{i-1,2i-s-1} - \rk(d_T^*)^{i,2i-s+1}.$ 
Therefore, in our notation, the equation $\dim_{\Z_2}(E_T^\infty)^{i,2i-s\pm 1}=n_i=\dim_{\Q}(E_L^\infty)^{i,2i-s\pm 1}$ for all bigradings with $i_1 < i < i_2$ can be written in the following way:
\begin{equation}
    \label{4.5eq}
    k_i^+ - \rk (d_L^*)^{i-1,2i-s-3} = k_i^+ + \ell_{i+1} - \rk(d_T^*)^{i-1,2i-s-1} - \rk(d_T^*)^{i,2i-s+1}.
\end{equation}
The inductive hypothesis implies that $\rk (d_L^*)^{i-1,2i-s-3} = \rk(d_T^*)^{i-1,2i-s-3}$, and Lemma \ref{Turnerrank} implies that $\rk(d_T^*)^{i-1,2i-s-3}=\rk(d_T^*)^{i-1,2i-s-1}$. Therefore $\rk (d_L^*)^{i-1,2i-s-3} = \rk(d_T^*)^{i-1,2i-s-1}$, and thus Equation \ref{4.5eq} implies that $\rk(d_T^*)^{i,2i-s+1} = \ell_{i+1}$. Lemma \ref{Turnerrank} then implies that  $\rk(d_T^*)^{i,2i-s-1}=\rk(d_T^*)^{i,2i-s+1} = \ell_{i+1}$. 

Because the Lee spectral sequence collapses at the second page in bigrading $(i,2i-s-1)$, Equation \ref{LeeInfinity} implies that $\dim_{\Q} (E_L^\infty)^{i,2i-s-1} =n_i=k_i^- -\rk(d_L^*)^{i,2i-s-1}$. Similarly, because Lemma \ref{Turner2} implies that the Turner spectral sequence collapses at the second page in bigrading $(i,2i-s-1)$, Equation \ref{TurnerInfinity} implies that $\dim_{\Z_2} (E_T^\infty)^{i,2i-s-1} = n_i=k_i^-+\ell_i - \rk(d_T^*)^{i-1,2i-s-3} - \rk(d_T^*)^{i,2i-s-1}$. The inductive hypothesis states that $\rk(d_T^*)^{i-1,2i-s-3} = \ell_i$, and thus $n_i=k_i^- - \rk(d_T^*)^{i,2i-s-1}$. Therefore $ \rk(d_T^*)^{i,2i-s-1} = \rk(d_L^*)^{i,2i-s-1}$, completing the proof.
\end{proof}
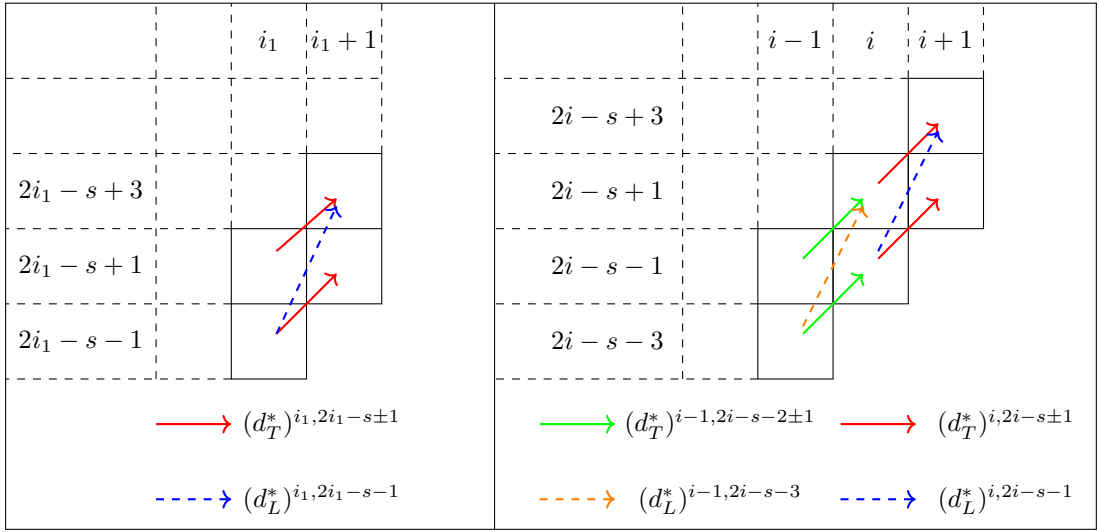
\begin{figure}
\[\begin{tikzpicture}[scale=1]
\draw (0,0) rectangle (1,2);
\draw (1,1) rectangle (2,3);
\draw (0,1) -- (1,1);
\draw (1,2) -- (2,2);
\draw[dashed] (0,0) -- (-3,0);
\draw[dashed] (0,1) -- (-3,1);
\draw[dashed] (0,2) -- (-3,2);
\draw[dashed] (1,3) -- (-3,3);
\draw[dashed] (-1,0) -- (-1,5);
\draw[dashed] (0,2) -- (0,5);
\draw[dashed] (1,3) -- (1,5);
\draw[dashed] (2,3) -- (2,5);
\draw[dashed] (-3,4) -- (2,4);

\draw (-2,.5) node{$2i_1-s-1$};
\draw (-2,1.5) node{$2i_1-s+1$};
\draw (-2,2.5) node{$2i_1-s+3$};
\draw (.5,4.5) node{$i_1$};
\draw (1.5,4.5) node{$i_1+1$};

\draw[red, thick, ->] (.6,.6) -- (1.4,1.4);
\draw[red,thick, ->] (.6,1.7) -- (1.4,2.4);
\draw[blue, dashed, thick, ->] (.6,.6) -- (1.4,2.3);

\begin{scope}[xshift = -1cm, yshift = .4cm]
\draw[thick, red,->] (0,-1) -- (1,-1);
\draw (2.2,-1) node{$(d_T^*)^{i_1,2i_1-s\pm 1}$};
\draw[thick,dashed, blue, ->] (0,-2) -- (1,-2);
\draw (2.2,-2) node{$(d_L^*)^{i_1,2i_1-s-1}$};
\end{scope}

\begin{scope}[xshift = 7cm]
\draw (0,0) rectangle (1,2);
\draw (1,1) rectangle (2,3);
\draw (2,2) rectangle (3,4);
\draw (0,1) -- (1,1);
\draw (1,2) -- (2,2);
\draw (2,3) -- (3,3);
\draw[dashed] (0,0) -- (-3.5,0);
\draw[dashed] (0,1) -- (-3.5,1);
\draw[dashed] (0,2) -- (-3.5,2);
\draw[dashed] (1,3) -- (-3.5,3);
\draw[dashed] (2,4) -- (-3.5,4);
\draw[dashed] (-1,0) -- (-1,5);
\draw[dashed] (0,2) -- (0,5);
\draw[dashed] (1,3) -- (1,5);
\draw[dashed] (2,4) -- (2,5);
\draw[dashed] (3,4) -- (3,5);

\draw (-2,.5) node{$2i-s-3$};
\draw (-2,1.5) node{$2i-s-1$};
\draw (-2,2.5) node{$2i-s+1$};
\draw (-2,3.5) node{$2i-s+3$};
\draw (.5,4.5) node{$i-1$};
\draw (1.5,4.5) node{$i$};
\draw (2.5,4.5) node{$i+1$};

\draw[red, thick, ->] (1.6,1.6) -- (2.4,2.4);
\draw[red, thick, ->] (1.6,2.6) -- (2.4,3.4);
\draw[blue, dashed, thick, ->] (1.6,1.7) -- (2.4,3.3);

\draw[green, thick, ->] (.6,.6) -- (1.4,1.4);
\draw[green, thick, ->] (.6,1.6) -- (1.4,2.4);
\draw[orange,dashed, thick,->] (.6,.7) -- (1.4,2.3);
\begin{scope}[xshift=-.4cm,yshift=.4cm]

\draw[green, thick, ->] (-2.5,-1) -- (-1.5,-1);
\draw[orange, dashed, thick, ->] (-2.5,-2) -- (-1.5,-2);

\draw (-.1,-1) node{$(d_T^*)^{i-1,2i-s-2\pm1}$};
\draw (-.1,-2) node{$(d_L^*)^{i-1,2i-s-3}$};

\draw[red, thick, ->] (1.5,-1) -- (2.5,-1);
\draw[blue, dashed, thick, ->] (1.5,-2) -- (2.5,-2);

\draw (3.7,-1) node{$(d_T^*)^{i,2i-s\pm 1}$};
\draw (3.7,-2) node{$(d_L^*)^{i,2i-s-1}$};
\end{scope}
\end{scope}

\draw (-3,-2) rectangle (11.5,5);
\draw (3.5,-2) -- (3.5,5);
\end{tikzpicture}\]
    \caption{The left side depicts the Turner and Lee maps involved in the base case of the proof of Lemma \ref{TurnerLeeTorsion}. The right side depicts the Turner and Lee maps involved in the inductive step of the proof of Lemma \ref{TurnerLeeTorsion}.}
    \label{mapfigure}
\end{figure}

We can now combine the previous lemmas to give sufficient conditions for all torsion in a thin region to be of order two.
\begin{thm}
\label{GeneralTheorem}
Suppose that a link $L$ satisfies:
\begin{enumerate}
    \item $H(L)$ is thin over $[i_1,i_2]$ for integers $i_1$ and $i_2$,
    \item $\dim_{\Q} H^{i_1,*}(L;\Q)=\dim_{\Z_p} H^{i_1,*}(L;\Z_p)$ for each odd prime $p$,
    \item $H^{i_1,*}(L)$ is torsion-free, and
    \item all Lee and Turner differentials are zero in homological grading $i_1-1$ on every page of the respective spectral sequences.
\end{enumerate}
If $i\in[i_1,i_2]$, then all torsion in $H^{i,*}(L)$ is $\Z_2$ torsion, that is, $H^{[i_1,i_2]}(L)\cong\Z^k\oplus \Z_2^\ell$ for some $k,\ell\geq 0$.
\end{thm}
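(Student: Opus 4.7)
The plan is to combine the three preceding lemmas with property (B4) of the Bockstein spectral sequence to pin down exactly the shape of the torsion. First, Lemma \ref{oddtorsion} (whose hypotheses follow from conditions (1), (2), (4) of the theorem) rules out odd torsion throughout $H^{[i_1,i_2]}(L)$. Lemma \ref{lowerdiagonal} then forces any remaining torsion in homological gradings $(i_1,i_2]$ to sit in bigradings $(i,2i-s-1)$ on the lower diagonal, and condition (3) handles grading $i_1$. So the only thing left to exclude is the possibility of a $\Z_{2^r}$ summand with $r\geq 2$ at each bigrading $(i,2i-s-1)$ with $i\in (i_1,i_2]$. By property (B4), it suffices to show that
\[\rk(d_B^1)^{i-1,\,2i-s-1} = \ell_i,\]
where $\ell_i$ denotes the total number of $2$-primary torsion summands of $H^{i,2i-s-1}(L)$.

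To compute this Bockstein rank I would apply Shumakovitch's identity $d_T^* = d_B^1\circ \nu^* + \nu^*\circ d_B^1$ (Lemma \ref{TBV}) at bigrading $(i-1,2i-s-3)$, which lies on the lower diagonal. A quick grading check shows that the term $\nu^*\circ d_B^1$ lands in $H^{i,2i-s-3}(L;\Z_2)$; since $2i-(2i-s-3)=s+3$, this bigrading is off both diagonals and hence vanishes by condition (1). So on $H^{i-1,2i-s-3}(L;\Z_2)$ the identity collapses to $d_T^* = d_B^1\circ \nu^*$. Because the thin hypothesis together with property (V3) makes $\nu^*$ an isomorphism from the lower to the upper diagonal throughout homological gradings in $[i_1,i_2]$, one obtains
\[\rk(d_T^*)^{i-1,\,2i-s-3} \;=\; \rk(d_B^1)^{i-1,\,2i-s-1}.\]

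Finally, Lemma \ref{TurnerLeeTorsion} applied at homological grading $i-1\in[i_1,i_2)$ (its hypotheses are precisely conditions (1), (3), (4) of the theorem) yields $\rk(d_T^*)^{i-1,2i-s-3}=\ell_i$. Chaining these equalities gives $\rk(d_B^1)^{i-1,2i-s-1}=\ell_i$ for every $i\in(i_1,i_2]$, so property (B4) kills all higher $2$-primary torsion and we conclude $H^{[i_1,i_2]}(L)\cong \Z^k\oplus \Z_2^\ell$. The main work has already been done in Lemma \ref{TurnerLeeTorsion}; the remaining argument is essentially a bigrading analysis of Shumakovitch's identity, where the mild obstacle is simply to verify that the ``other'' term $\nu^*\circ d_B^1$ vanishes for grading reasons so that the Turner rank transports cleanly to a Bockstein rank.
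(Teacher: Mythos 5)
Your proof is correct and follows essentially the same route as the paper's: rule out odd torsion via Lemma \ref{oddtorsion}, localize remaining torsion to the lower diagonal via Lemma \ref{lowerdiagonal}, then use Shumakovitch's identity together with Lemma \ref{TurnerLeeTorsion} to compute the rank of the first Bockstein differential and invoke property (B4). The paper states the conclusion of the $d_B^1$--$d_T^*$ rank equality tersely, indexing at $(i,2i-s-1)$ rather than your $(i-1,2i-s-3)$; your version spells out the bigrading check that kills the $\nu^*\circ d_B^1$ term, which is a welcome extra level of detail but not a different argument.
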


\begin{proof}
Since $H(L)$ is thin over $[i_1,i_2]$, there is an integer $s$ such that $H^{[i_1,i_2]}(L)$ is supported in bigradings $(i,j)$ satisfying $2i-j=s\pm 1$. Lemma \ref{lowerdiagonal} implies that all torsion in $H^{[i_1,i_2]}(L)$ occurs on the lower diagonal, i.e. in bigradings $(i,2i-s-1)$ for $i\in(i_1,i_2]$. Lemma \ref{oddtorsion} implies that $H^{[i_1,i_2]}(L)$ does not contain any torsion summands of $\Z_{p^r}$ for any odd prime $p$. Therefore $H^{[i_1,i_2]}(L)$ consists of $\Z$ and $\Z_{2^r}$ summands for various values of $r$.

Lemmas \ref{TBV} and \ref{TurnerLeeTorsion} imply that $\rk (d_B^1)^{i,2i-s+1} = \rk (d_T^*)^{i,2i-s-1} = \ell_{i+1}$ for each $i\in[i_1,i_2)$. Property (B\ref{bockprop}) of the Bockstein spectral sequence implies that there is no torsion in $H^{[i_1,i_2]}(L)$ of order $2^r$ for $r>1$. Therefore, the only torsion in $H^{i,*}(L)$ is of the form $\Z_2$ for $i\in [i_1,i_2]$.
\end{proof}

The main theorem of the paper follows from Theorem \ref{GeneralTheorem}.
\begin{thm}
\label{GeneralTheorem2}
Suppose that a link $L$ satisfies:
\begin{enumerate}
    \item $H(L)$ is thin over $[i_1,i_2]$ for integers $i_1$ and $i_2$ where $H^{[i_1,i_2]}(L)$ is supported in bigradings $(i,j)$ with $2i-j=s\pm 1$ for some $s\in\mathbb{Z}$,
    \item $\dim_{\Q}H^{i_1,*}(L;\Q)=\dim_{\Z_p} H^{i_1,*}(L;\Z_p)$ for each odd prime $p$,
    \item $H^{i_1,*}(L)$ is torsion-free, and
    \item $H^{i_1-1,j}(L)$ is trivial when $j\leq 2i_1-s -3.$
\end{enumerate}
Then all torsion in $H^{i,*}(L)$ is $\Z_2$ torsion for $i\in[i_1,i_2]$, that is, $H^{[i_1,i_2]}(L)=\Z^k\oplus \Z_2^\ell$ for some $k,\ell\geq 0$. 
\end{thm}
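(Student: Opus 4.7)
The plan is to deduce Theorem \ref{GeneralTheorem2} from Theorem \ref{GeneralTheorem} by verifying that hypothesis (4) of the latter, namely the vanishing of all Lee and Turner differentials in homological grading $i_1-1$ on every page, follows from the weaker hypothesis (4) of the former. Hypotheses (1), (2), and (3) are identical in the two statements.

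The first step is to observe that hypothesis (3) of Theorem \ref{GeneralTheorem2} ensures $\mathrm{Tor}(H^{i_1,j}(L),R)=0$ for every coefficient ring $R$, so by the universal coefficient theorem $H^{i_1-1,j}(L;R)\cong H^{i_1-1,j}(L)\otimes R$. Combined with hypothesis (4), this yields $H^{i_1-1,j}(L;R)=0$ for all $j\leq 2i_1-s-3$, whenever $R\in\{\Z_2,\Q,\Z_p\mid p~\text{odd prime}\}$.

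Next I would verify the Lee and Turner vanishing in homological grading $i_1-1$ by a bidegree analysis. A Lee differential $(d_L^r)^{i_1-1,j}$ of bidegree $(1,4r)$ with nontrivial target in the thin region at $i_1$ forces $j+4r\in\{2i_1-s-1,2i_1-s+1\}$, hence $j\leq 2i_1-s-3$, and the source (a subquotient of $H^{i_1-1,j}(L;R)$) vanishes by the first step. Similarly for a Turner differential $(d_T^r)^{i_1-1,j}$ of bidegree $(1,2r)$: when $r\geq 2$ one gets $j\leq 2i_1-s-3$, and when $r=1$, $j=2i_1-s-3$ is also covered. The single case where bidegree alone is insufficient is the first-page Turner differential $(d_T^*)^{i_1-1,2i_1-s-1}$, whose source $H^{i_1-1,2i_1-s-1}(L;\Z_2)$ is not forced to vanish.

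The main obstacle and key step is handling this exceptional Turner differential. The plan is to invoke Lemma \ref{TBV}, which gives $d_T^*=d_B^1\circ\nu^*+\nu^*\circ d_B^1$, and to show that the first Bockstein differential $d_B^1=\beta$ vanishes on all of $H^{i_1-1,*}(L;\Z_2)$. This follows from the integral Bockstein long exact sequence: the connecting homomorphism $\tilde\beta:H^{i_1-1}(L;\Z_2)\to H^{i_1}(L)$ has image contained in the $2$-torsion subgroup $\ker(\times 2)\subseteq H^{i_1}(L)$, which is trivial by hypothesis (3). Hence $\tilde\beta=0$, and so $\beta=(\red 2)\circ\tilde\beta=0$ on $H^{i_1-1,*}(L;\Z_2)$. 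Substituting this into the formula from Lemma \ref{TBV} yields $(d_T^*)^{i_1-1,2i_1-s-1}=0$. All of hypothesis (4) of Theorem \ref{GeneralTheorem} being verified, an application of Theorem \ref{GeneralTheorem} delivers the conclusion.
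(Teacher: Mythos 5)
Your proposal is correct and follows essentially the same route as the paper's proof: reduce to Theorem \ref{GeneralTheorem} by checking that the Lee and Turner differentials vanish in homological grading $i_1-1$, use the bidegrees $(1,4r)$ and $(1,2r)$ together with hypotheses (3) and (4) to kill everything except $(d_T^*)^{i_1-1,2i_1-s-1}$, and then apply Lemma \ref{TBV} and the torsion-freeness of $H^{i_1,*}(L)$ to kill that last differential via vanishing of the first Bockstein. The paper phrases the final step as a proof by contradiction while you argue directly that $d_B^1$ vanishes on $H^{i_1-1,*}(L;\Z_2)$, but the underlying reasoning is identical.
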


\begin{proof}
Property (L\ref{diffLee}) states that the Lee differential on the $E^r_L$ page has bidegree $(1,4r)$. Since $H^{i_1-1,j}(L)$ is trivial when $j\leq 2i_1-s -3$, all Lee differentials in homological grading $i_1-1$ are zero. Property (T\ref{diffTurner}) states that the Turner differential on the $E^r_T$ page has bidegree $(1,2r)$. Thus the only potential nonzero differential is $(d_T^*)^{i_1-1,2i_1-s-1}$ from $H^{i_1-1,2i_1-s-1}(L;\Z_2)$ to $H^{i_1,2i_1-s+1}(L;\Z_2)$. By Lemma \ref{TBV}, we have $(d_T^*)^{i_1-1,2i_1-s-1} = (d_B^1)^{i_1-1,2i_1-s+1} \circ (\nu^*)^{i_1-1,2i_1-s-1} + (\nu^*)^{i_1,2i_1-s-1} \circ (d_B^1)^{i_1-1,2i_1-s-1}$. If $(d_T^*)^{i_1-1,2i_1-s-1}$ is nonzero, then at least one of $(d_B^1)^{i_1-1,2i_1-s+1}$ or $(d_B^1)^{i_1-1,2i_1-s-1}$ is also nonzero, contradicting the fact that $H^{i_1,*}(L)$ has no torsion. Thus $(d_T^*)^{i_1-1,2i_1-s-1} =0$. Therefore, all Turner differentials in homological grading $i_1-1$ are zero. The result follows from Theorem \ref{GeneralTheorem}.
\end{proof}

\section{An application to 3-braids}\label{application}

There are a number of results about the Khovanov homology of closed $3$-braids, but a full computation of the Khovanov homology of closed $3$-braids remains open. Turner \cite{turner2008spectral} computed the Khovanov homology of the $(3,q)$ torus links $T(3,q)$ over coefficients in $\Q$ or $\Z_p$ for an odd prime $p$ (see also Sto\v{s}i\'{c} \cite{stosic}). Benheddi \cite{benheddi2017khovanov} computed the reduced Khovanov homology of $T(3,q)$ with coefficients in $\Z_2$. Let $\widetilde{H}(L;\Z_2)$ be the reduced Khovanov homology of $L$ with $\Z_2$ coefficients. The Khovanov homology of $T(3,q)$ with coefficients in $\Z_2$, shown here in Figure \ref{QversusZ2}, can be obtained from Benheddi's computations via the isomorphism 
\begin{equation}
    \label{reducediso}
H^{i,j}(L;\Z_2) \cong \widetilde{H}^{i,j-1}(L;\Z_2) \oplus \widetilde{H}^{i,j+1}(L;\Z_2)
\end{equation}
from Corollary 3.2.C in \cite{shumakovitch2004torsion}. We recover our $j$ grading from Benheddi's $\delta$ grading by letting $j=\delta+2i$. Both Turner and Benheddi's computations play a crucial role in our proofs.

The literature on the Khovanov homology of non-torus closed $3$-braids is considerably more sparse. Baldwin \cite{Baldwin} proved that a closed $3$-braid is quasi-alternating if and only if its Khovanov homology is homologically thin. Abe and Kishimoto \cite{abekishimoto} used the Rasmussen $s$-invariant to compute the alternation number and dealternating number of many closed $3$-braids. Lowrance \cite{Lowrance3braid} computed the homological width of the Khovanov homology of all closed $3$-braids.

Over the next few sections, we prove Theorem \ref{mainthm2}, showing that all torsion in the Khovanov homology of a closed braid in $\Omega_0, \Omega_1, \Omega_2$, or $\Omega_3$ is $\Z_2$ torsion. In this section, we use the same notation for a braid and its closure when the context is clear. First, we argue that it suffices to prove Theorem \ref{mainthm2} when the exponent $n$ in $\Delta^n$ in the braid word is non-negative. 
Using Turner's \cite{turner2008spectral} and Benheddi's \cite{benheddi2017khovanov} computations together with the long exact sequences \ref{negativeLES} and \ref{positiveLES}, we obtain the Khovanov homology for all links in $\Omega_0, \Omega_1,\Omega_2$ and $\Omega_3$, over $\Q$ and $\Z_p$ where $p$ is any prime. Finally, we use these computations together with Theorem \ref{GeneralTheorem2} to obtain the integral Khovanov homology.

\subsection{Reducing to the case $n\geq 0$}

Murasugi's classification of $B_3$ expresses any 3-braid as a word $\Delta^{2n} \beta$ for some $n\in\Z,\beta\in B_3$, up to conjugation. The following observations imply that, for the purposes of determining which possible types of torsion which may appear, we can assume $n\geq 0$.
\begin{enumerate}
\item The mirror image  $m(D)$ of a link diagram $D$ is the diagram obtained by changing all crossings. On the level of braid words, $m : B_3 \rightarrow B_3$ is a group homomorphism satisfying $m(\sigma_i)=\sigma_i^{-1}$ and $m(\Delta) = \Delta^{-1}$. Recall that the torsion in Khovanov homology of a link diagram and the torsion of its mirror image differ only by a homological shift \cite[Corollary 11]{khovanov1999categorification}. So the Khovanov homology of $L$ has $\Z_{p^r}$ torsion if and only if the Khovanov homology of its mirror $m(L)$ has $\Z_{p^r}$ torsion. 

\item Consider the group homomorphism $\phi:B_3\to B_3$ defined on generators by $\phi(\sigma_1)=\sigma_2$ and $\phi(\sigma_2)=\sigma_1$. If the braid word $\omega$ is a projection of a link $L$ embedded in $\{(x,y,z)\in\R^3 \ | \ 0<z<1\}$ to the plane $z=0$, then the projection of $L$ to the plane $z=1$ is $\phi(\omega)$. Thus the map $\phi$ preserves the isotopy type of the braid word. Therefore the Khovanov homology of the closure of $\omega$ has $\Z_{p^r}$ torsion if and only if the Khovanov homology of the closure of $\phi(\omega)$ has $\Z_{p^r}$ torsion. Note that the homomorphism $\phi$ satisfies $\phi(\Delta)=\Delta$. See Figure \ref{dandphiofd} for an example of the action of $\phi$ on a braid diagram.
\end{enumerate}

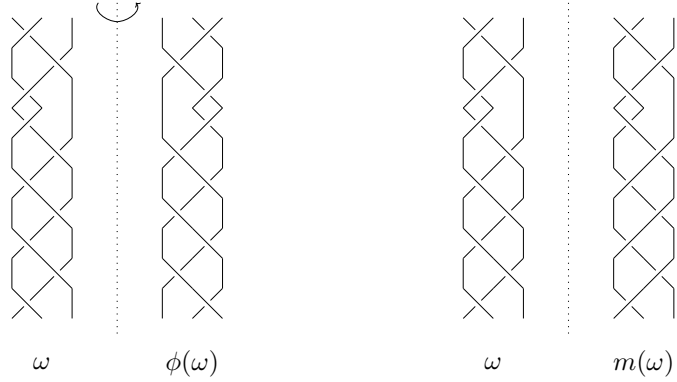
\begin{figure}[ht]
\centering
\begin{tikzpicture}[scale=.4]
\node at (0,0){\usebox\mirrorbraid};
\end{tikzpicture}
\caption{Left: The braid word $\omega = \Delta^2\sigma_1^2\sigma_2\sigma_1 \in \Omega_6$ and the corresponding diagram $\phi(\omega)= \Delta^2\sigma_2^2\sigma_1\sigma_2$. Think of $\phi(D)$ as $D$ rotated about the dotted line. Right: The braid word $\omega$ and its mirror image $m(\omega)$.}
\label{dandphiofd}
\end{figure}

The following equalities together with the above two arguments show that in all cases it suffices to determine torsion for $n\geq0$:
\begin{align}
m(\Delta^{-2n})&=\Delta^{2n} \label{simplifyone}\\
m(\Delta^{-2n}\sigma_1\sigma_2)&=\Delta^{2n-2}(\sigma_1\sigma_2)^2\label{simplifytwo}\\
m(\Delta^{-2n}(\sigma_1\sigma_2)^2)&=\Delta^{2n-2}(\sigma_1\sigma_2)\label{simplifythree}\\
m(\Delta^{-2n-1})&=\Delta^{2n+1}\\
m\phi(\Delta^{-2n}\sigma_1^{-p})&=\Delta^{2n}\sigma_2^p\\
m\phi(\Delta^{-2n}\sigma_2^{q})&=\Delta^{2n}\sigma_1^{-q}\\
m\phi(\Delta^{-2n}\sigma_1^{-p_1}\sigma_2^{q_1}\dots \sigma_1^{-p_r}\sigma_2^{q_r})&=\Delta^{2n}\sigma_2^{p_1}\sigma_1^{-q_1}\dots \sigma_2^{p_r}\sigma_1^{-q_r}.\label{omegasixeqn}
\end{align}
 
 For the case of $\Omega_6$, although we will not address it in this paper, it may be necessary to enlarge the class $\Omega_6$ to a class $\Omega_6^\prime$ which allows powers $p_i,q_j$ to be equal to zero, so that the right hand side of (\ref{omegasixeqn}) stays inside $\Omega_6^\prime$.

\subsection{Odd torsion in $\Omega_0,\Omega_1,\Omega_2,\Omega_3$}

We begin with a theorem, shown by Turner in \cite{turner2008spectral}, that will be useful in conjunction with Murasugi's classification of $3$-braids and the long exact sequence of Section \ref{ComputationalTools}.
\begin{thm}[Turner]\label{toruslinksoddtorsion}
For each $q \in \mathbb{Z}$, the Khovanov homology $H(T(3,q))$ of the torus link $T(3,q)$ contains no $\Z_{p^r}$ torsion for $p \neq 2$. That is, there is no $\Z_{p^r}$ torsion for $p \neq 2$ in the Khovanov homology of links of types $\Omega_0,\Omega_1$ and $\Omega_2$.
\end{thm}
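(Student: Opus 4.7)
The plan is to show that $\dim_{\mathbb{Q}} H^{i,j}(T(3,q); \mathbb{Q}) = \dim_{\mathbb{Z}_p} H^{i,j}(T(3,q); \mathbb{Z}_p)$ in every bigrading $(i,j)$ for every odd prime $p$. Once this equality is established, the universal coefficient theorem
\[ H^{i,j}(L;\mathbb{Z}_p) \cong \bigl(H^{i,j}(L) \otimes \mathbb{Z}_p\bigr) \oplus \mathrm{Tor}\bigl(H^{i+1,j}(L),\mathbb{Z}_p\bigr) \]
immediately forbids any $\mathbb{Z}_{p^r}$ summand in $H(T(3,q))$ for $p$ odd. Using the mirror-image identities \eqref{simplifyone}--\eqref{simplifythree} combined with the fact that mirroring only shifts the homological grading, it suffices to treat $q \geq 0$, so we may assume the exponent of $\Delta$ is non-negative.

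The key tool is the Lee spectral sequence of Section~\ref{ComputationalTools} applied over each ring $R \in \{\mathbb{Q},\mathbb{Z}_p\}$ with $p$ odd. By properties (L1)--(L4), this spectral sequence has $E_L^1 \cong H(L;R)$, differentials of bidegree $(1,4r)$, and collapses to an $E_L^\infty$ of total rank $2^k$ (where $k$ is the number of components of $L$) whose bigradings are determined entirely by linking numbers of the components; in particular the bigradings supporting $E_L^\infty$ do not depend on the choice of $R$. Therefore the rational and mod-$p$ Lee spectral sequences start with a priori different $E_1$ pages but converge to $E_\infty$ pages of identical ranks in identical bigradings.

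I would then compute both $H(T(3,q);\mathbb{Q})$ and $H(T(3,q);\mathbb{Z}_p)$ explicitly and compare. The rational computation is essentially due to Sto\v{s}i\'c and the Lee-spectral-sequence presentation of Turner; the resulting support has the ``staircase'' pattern illustrated in the boxes $\usebox\TorusThreeNOverQ$ (cf.\ the figures defined above for $T(3,3n),\ T(3,3n+1),\ T(3,3n+2)$). Because the support sits on a pair of diagonals that differ by $j \to j+2$ together with a few extra pieces on the top row, and because $d_L^r$ shifts the polynomial grading by $4r$, in each bigrading only finitely many $d_L^r$ can possibly be nontrivial, and one can read their ranks off by comparing $E_1$ with the prescribed $E_\infty$. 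A parallel analysis over $\mathbb{Z}_p$ uses induction on $n \geq 0$ together with the long exact sequences \eqref{negativeLES}--\eqref{positiveLES}, applied to a crossing of $\Delta^2$: the two smoothings are the closures of $\Delta^{2n-2}\sigma_1\sigma_2$ (or similar) and of simpler braids whose Khovanov homology is inductively known to have matching rational and $\mathbb{Z}_p$ ranks. Functoriality of the long exact sequence with respect to the coefficient ring then propagates the equality of Betti numbers to $T(3,q)$.

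The main obstacle is the bigrading bookkeeping. Over $\mathbb{Q}$ one knows a priori that the Lee spectral sequence collapses to two copies of $\mathbb{Q}^{2^{k-1}}$ in prescribed bigradings and can reverse-engineer the ranks of every $d_L^r$; over $\mathbb{Z}_p$ the combinatorics are the same, but one must exclude the possibility that a mod-$p$ Lee differential has strictly lower rank than its rational counterpart, which would leave extra classes on $E_L^\infty$ and contradict the fixed total rank $2^k$. Ruling this out for each of the three families $T(3,3n)$, $T(3,3n+1)$, $T(3,3n+2)$ separately, using the explicit rational answer as a ceiling and the fixed $E_\infty$ as a floor, is the technical heart of the argument.
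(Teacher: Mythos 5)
The paper itself does not prove this theorem: it is stated with the attribution ``[Turner]'' and explicitly credited to Turner's paper \cite{turner2008spectral} (``We begin with a theorem, shown by Turner in \cite{turner2008spectral}\dots''), so there is no in-paper proof to compare against. What you have written is a sketch of the strategy from that reference (Lee spectral sequence plus explicit computation over $\Q$ and $\Z_p$), and the overall plan is sound: the universal coefficient reduction is correct, and the observation that $E_L^\infty$ has rank $2^k$ in bigradings determined by linking numbers, independent of the field, is exactly the right starting point.

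However, as written there is a genuine gap in the middle. You defer the actual content with ``I would then compute both $H(T(3,q);\Q)$ and $H(T(3,q);\Z_p)$ explicitly and compare,'' and the step you use to obtain the mod-$p$ side --- ``Functoriality of the long exact sequence with respect to the coefficient ring then propagates the equality of Betti numbers'' --- does not by itself force anything. The skein long exact sequence is functorial in the coefficient ring, but the ranks of its connecting homomorphisms need not agree over $\Q$ and over $\Z_p$; indeed that is precisely where torsion would show up. Similarly, knowing $\dim E_L^\infty$ over both fields does not pin down $\dim E_L^1$: extra classes on the mod-$p$ $E_1$ page could, a priori, be absorbed by higher-rank Lee differentials. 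Ruling this out requires knowing the exact shape of the rational answer (the two-diagonal ``staircase'' with a bounded exceptional region), so that the bidegree $(1,4r)$ constraint leaves no room for extra cancellation. You flag this yourself as ``the technical heart,'' but it is exactly the part that is missing; the rest of the sketch is set-up. Note also that the paper's own Lemma~\ref{oddtorsion} implements the counting argument you are gesturing at, but only under a thinness hypothesis that $T(3,q)$ does not satisfy globally, so invoking it directly would not close the gap either.
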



We now compute the Khovanov homology of closed $3$-braids in $\Omega_3$ over $\Q$ or $\Z_p$ for any prime $p$. A corollary of this computation is that all torsion in the Khovanov homology of such links is of the form $\Z_{2^r}$.

\begin{thm}\label{omega3comp} 
For $\F=\Q$ or $\Z_p$ for any odd prime $p$, and any $n\geq 0$,  
\[H(\Delta^{2n+1};\F)\cong H(T(3,3n+1);\F)\{-1\}\oplus H(U;\F)[-4n-2]\{-12n-5\}.\]
\end{thm}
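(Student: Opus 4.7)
My plan is to apply the skein long exact sequence (\ref{negativeLES}) to the topmost $\sigma_1$-crossing of $\Delta^{2n+1} = \Delta^{2n}\sigma_1\sigma_2\sigma_1$, which is a negative crossing in the paper's convention. The 1-smoothing of this crossing removes the top $\sigma_1$, leaving the braid $\Delta^{2n}\sigma_1\sigma_2$ whose closure is $T(3,3n+1)$ by definition; this accounts for the $H(T(3,3n+1);\F)\{-1\}$ summand of the claimed decomposition. I will then show that the 0-smoothed diagram $D_0$ represents the unknot and that the shift constant $c = n_-(D_0) - n_-(D)$ equals $-4n-2$, so that the $D_0$-contribution in the long exact sequence matches the $H(U;\F)[-4n-2]\{-12n-5\}$ summand.

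To identify $D_0$ with the unknot, I will observe that the 0-smoothing inserts a cup at the top of the sub-braid $\Delta^{2n}\sigma_1\sigma_2$ joining strands $1$ and $2$, together with a corresponding cap above, so that $D_0$ can be viewed as the plat closure of $\Delta^{2n}\sigma_1\sigma_2$ on strands $1$ and $2$ combined with the standard closure on strand $3$. The plan is to slide the cup downward through the adjacent $\sigma_2$ and $\sigma_1$ crossings by planar isotopies and Reidemeister moves, and then through the full-twist block $\Delta^{2n}$ using the centrality of $\Delta^2$ in $B_3$ and its geometric interpretation as a full twist of the three strands, finally reducing $D_0$ to an unknot diagram. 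The base case $n=0$ is immediate because $D_0$ then has only two crossings and is therefore forced to be the unknot. For the shift constant, the inherited orientation on $D_0$ reverses strand $2$ of the sub-braid, and a direct count shows that strand $2$ participates in exactly $4n+1$ of the $6n+2$ crossings of $\Delta^{2n}\sigma_1\sigma_2$, giving $n_-(D_0) = 2n+1$ and hence $c = -4n-2$, with corresponding shifts $[c]\{3c+1\} = [-4n-2]\{-12n-5\}$ as required.

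Finally, I will argue that the resulting long exact sequence splits over $\F$ by a homological grading argument: Turner's computation of $H(T(3,3n+1);\F)$ shows that this module is supported in homological gradings $[-4n-1, 0]$, whereas the $D_0$-contribution $H(U;\F)[-4n-2]\{-12n-5\}$ lives entirely in homological grading $-4n-2$. Since these homological ranges are disjoint, the connecting homomorphism $\delta_*$ in the long exact sequence is forced to vanish, yielding the direct sum isomorphism claimed in the theorem. The hardest step is verifying that $D_0$ is the unknot for every $n \geq 0$; while the case $n=0$ is trivial, the general case requires careful execution of the cup-sliding isotopy using the centrality of $\Delta^2$ and a sequence of Reidemeister moves to collapse the configuration, and this is the most technically delicate part of the proof.
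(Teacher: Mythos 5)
Your proposal follows the same overall strategy as the paper: apply the long exact sequence (\ref{negativeLES}) at the top $\sigma_1$ of $\Delta^{2n+1}=(\sigma_1\sigma_2)^{3n+1}\sigma_1$, identify $D_1$ as a diagram of $T(3,3n+1)$ and $D_0$ as a diagram of the unknot, and compute the shift constant $c=-4n-2$. All of that matches the paper's proof. However, the last step of your argument has a genuine gap.

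You assert that the connecting homomorphism $\delta_*$ vanishes because the $D_0$-contribution lives in homological grading $-4n-2$ while the $D_1$-contribution lives in homological gradings $[-4n-1,0]$, and that these ranges are disjoint. This does not suffice: in the long exact sequence (\ref{negativeLES}), the connecting map is
\[
\delta_* : H^{i+4n+2,\,j+12n+5}(U;\F) \longrightarrow H^{i+1,\,j+1}\bigl(T(3,3n+1);\F\bigr),
\]
which raises homological grading by $1$. The $U$-part is nonzero precisely at $i=-4n-2$, and then the target $H^{-4n-1,\,j+1}(T(3,3n+1);\F)$ is also nonzero (equal to $\F$) when $j=-12n-4$ or $j=-12n-6$. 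So $\delta_*$ is an a priori unknown map $\F\to\F$ in each of these two bidegrees, and it could well be an isomorphism rather than zero. The adjacency of the two homological ranges is exactly what makes $\delta_*$ potentially nontrivial; ``disjointness'' would only rule out a nonzero $\delta_*$ if the ranges were separated by more than one homological grading. The paper closes this gap with a Lee-homology argument: Lemma \ref{leehomology} gives $\dim_\F \operatorname{Lee}^{-4n-2}(\Delta^{2n+1};\F)=2$, and since the Lee spectral sequence has $E^1$-page the $\F$-Khovanov homology and converges to Lee homology, $\dim_\F H^{-4n-2,*}(\Delta^{2n+1};\F)\geq 2$. Combined with the upper bound $\leq 2$ coming from the long exact sequence, this forces both $H^{-4n-2,-12n-4}(\Delta^{2n+1};\F)$ and $H^{-4n-2,-12n-6}(\Delta^{2n+1};\F)$ to be $\F$, which in turn forces $\delta_*=0$ in both bidegrees. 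You will need some such external input (Lee homology, or the invariance of the $s$-invariant summand, etc.) to rule out a nonzero connecting map; a purely grading-theoretic argument cannot do it here.

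A secondary remark: you flag the identification $D_0\cong U$ as the hardest step and propose a cup-sliding isotopy through $\Delta^{2n}$. That identification is indeed needed, but it is not the delicate point of the proof; the paper asserts it with minimal justification. The delicate point is the vanishing of $\delta_*$, which your argument does not establish.
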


\begin{proof}
First observe that $\Delta^{2n+1}=(\sigma_1\sigma_2)^{3n+1}\sigma_1$ where $(\sigma_1\sigma_2)^{3n+1}$ is a braid word for $T(3,3n+1)$.  We consider smoothing the top $\sigma_1$.
\begin{center}
\scalebox{1.4}{
\begin{tikzpicture}[scale=.5]
\node[scale=0.5] at (2,3){\usebox\torusbox};
\node[scale=.6] at (2,3) {$(\sigma_1\sigma_2)^{3n+1}$};
\node[scale=.5] at (2,4.5){\usebox\sigmaoneori};
\node at(2,1){$D$};
\node[scale=.5] at (2+4,3){\usebox\torusbox};
\node[scale=.6] at (2+4,3) {$(\sigma_1\sigma_2)^{3n+1}$};
\node[scale=.5] at (2+4,4.5){\usebox\sigmaoneZEROori};
\node at(2+4,1){$D_0$};
\node[scale=.5] at (2+8,3){\usebox\torusbox};
\node[scale=.6] at (2+8,3) {$(\sigma_1\sigma_2)^{3n+1}$};
\node[scale=.5] at (2+8,4.5){\usebox\sigmaoneONEori};
\node at(2+8,1){$D_1$};
\end{tikzpicture}
}
\end{center}

The diagram $D_0$ is a diagram of the unknot $U$ and $D_1$ is a diagram of $T(3,3n+1)$. The top $\sigma_1$ in $\Delta^{2n+1}$ is a negative crossing so we compute $c=n_-(D_0)-n_-(D)=(1+2n)-(6n+3)=-4n-2.$ Using (\ref{negativeLES}) for each $j$, and letting $\F=\Q$ or $\zp$ where $p$ is an odd prime, we get a long exact sequence
\begin{equation}\label{LESOmega3}
\xrightarrow{\delta_*} H^{i,j+1}(T(3,3n+1);\F) \longrightarrow H^{i,j}(D;\F) \longrightarrow H^{i+4n+2,j+12n+5}(U;\F) \xrightarrow{\delta_*} H^{i+1,j+1}(T(3,3n+1);\F).
\end{equation}
For $i\neq -4n-2, -4n-1$, we have $H^{i+4n+2,j+12n+5}(U;\F)=0=H^{i+4n+1,j+12n+5}(U;\F)$ for every $j$, so exactness yields $H^{i,j}(D;\F)\cong H^{i,j+1}(T(3,3n+1);\F)$ for every $j$. For $j\neq -12n-5\pm1$, the portion of the long exact sequence containing $i=-4n-2$ and $-4n-1$ splits as
\[ 0 \longrightarrow H^{-4n-2,j+1}(T(3,3n+1);\F) \longrightarrow H^{-4n-2,j}(D;\F) \longrightarrow 0 \]
\[ 0\xrightarrow{} H^{-4n-1,j+1}(T(3,3n+1);\F) \longrightarrow H^{-4n-1,j}(D;\F) \longrightarrow 0. \]
Upon examining the homology of $T(3,3n+1)$, shown here in Figure \ref{QversusZ2}, we have the following two equalities: 
\begin{align}
    H^{-4n-2,j}(D;\F)&\cong H^{-4n-2,j+1}(T(3,3n+1);\F)=0,\label{jneq-1}\\ H^{-4n-1,j}(D;\F)&\cong H^{-4n-1,j+1}(T(3,3n+1);\F)=0,\label{jneq-2}
\end{align}
when $j\neq -12n-5\pm1$.
It remains to check the portion of the long exact sequence containing $i=-4n-2, -4n-1$ in the cases $j=-12n-6,-12n-4$:
\[ 0 \longrightarrow H^{-4n-2,-12n-3}(T(3,3n+1);\F) \longrightarrow H^{-4n-2,-12n-4}(D;\F) \longrightarrow H^{0,1}(U;\F) \]
\[ \xrightarrow{\delta_*} H^{-4n-1,-12n-3}(T(3,3n+1);\F) \longrightarrow H^{-4n-1,-12n-4}(D;\F) \longrightarrow 0, \]
\[ 0 \longrightarrow H^{-4n-2,-12n-5}(T(3,3n+1);\F) \longrightarrow H^{-4n-2,-12n-6}(D;\F) \longrightarrow H^{0,-1}(U;\F) \]
\[ \xrightarrow{\delta_*} H^{-4n-1,-12n-5}(T(3,3n+1);\F) \longrightarrow H^{-4n-1,-12n-6}(D;\F) \longrightarrow 0. \]
From Figure \ref{QversusZ2}, we obtain
\[ H^{-4n-2,-12n-3}(T(3,3n+1);\F)=0=H^{-4n-2,-12n-5}(T(3,3n+1);\F), \]
\[ H^{-4n-1,-12n-3}(T(3,3n+1);\F)=\F =H^{-4n-1,-12n-5}(T(3,3n+1);\F),\]
and of course $H^{0,\pm1}(U;\F)=\F$ for any field $\F$. Thus we have exact sequences
\begin{equation}\label{For0one}
0 \longrightarrow H^{-4n-2,-12n-4}(D;\F) \longrightarrow \F \xrightarrow{\delta_*} \F \longrightarrow H^{-4n-1,-12n-4}(D;\F) \longrightarrow 0
\end{equation}
\begin{equation}\label{For0two}
0 \longrightarrow H^{-4n-2,-12n-6}(D;\F) \longrightarrow \F \xrightarrow{\delta_*} \F \longrightarrow H^{-4n-1,-12n-6}(D;\F) \longrightarrow 0.
\end{equation}
From (\ref{For0one}) and (\ref{For0two}) it follows that each of the groups $H^{-4n-1,-12n-5\pm 1}(D;\F)$, $H^{-4n-2,-12n-5\pm 1}(D;\F)$ is isomorphic to either $\F$ or 0. 
We argue that all four of them are isomorphic to $\F$. A straightforward application of Lemma \ref{leehomology} yields the dimension $\dim_{\F}(\operatorname{Lee}^{-4n-2}(\Delta^{2n+1};\F)) = 2$. 
We found in equations (\ref{jneq-1}) and (\ref{jneq-2}) that
$H^{-4n-2,j}(\Delta^{2n+1};\F) = 0$ for $j \neq -12n-5\pm 1$, 
and therefore, $\dim_\F H^{-4n-2,*}(\Delta^{2n+1};\F)\leq 2$. Since the Lee spectral sequence has $E^1$ page the $\F$-Khovanov homology and converges to Lee homology, we must also have $\dim_\F H^{-4n-2,*}(\Delta^{2n+1};\F)\geq 2$, and so it follows that $H^{-4n-2,-12n-5\pm 1}(D;\F) = \F$. Finally, the non-triviality of these two groups together with (\ref{For0one}) and (\ref{For0two}) imply that $H^{-4n-1,-12n-4}(D;\F)\cong\F$ and $H^{-4n-1,-12n-6}(D;\F)\cong\F$.
\end{proof}

\begin{cor}\label{NoOddTorsionOmega3}
If $L\in\Omega_3$, then $H(L)$ contains no $\Z_{p^r}$ torsion for $r\geq 1$, where $p$ is an odd prime.
\label{omegathree}
\end{cor}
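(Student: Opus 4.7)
The plan is to deduce the corollary essentially as a direct consequence of Theorem~\ref{omega3comp} combined with Turner's Theorem~\ref{toruslinksoddtorsion}. First, using the reduction discussed at the start of Section~\ref{application}, it suffices to treat braids of the form $\Delta^{2n+1}$ with $n \geq 0$: for $n < 0$ the identity $m(\Delta^{-2n-1}) = \Delta^{2n+1}$ together with the fact that mirroring merely shifts the homological grading reduces the odd-torsion question to the non-negative case.

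With $n \geq 0$ fixed, Theorem~\ref{omega3comp} provides, for every field $\F \in \{\Q\} \cup \{\Z_p : p \text{ odd prime}\}$, a bigraded isomorphism
\[
H(\Delta^{2n+1};\F) \cong H(T(3,3n+1);\F)\{-1\} \oplus H(U;\F)[-4n-2]\{-12n-5\}.
\]
Taking dimensions over $\F$ in each bigrading, and using that $H(U;\F)$ has the same rank (namely one copy of $\F$ in bigradings $(0,\pm 1)$) regardless of $\F$, one obtains
\[
\dim_{\F}H^{i,j}(\Delta^{2n+1};\F) = \dim_{\F}H^{i,j+1}(T(3,3n+1);\F) + \dim_{\F}H^{i+4n+2,j+12n+5}(U;\F).
\]

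Next, invoke Turner's Theorem~\ref{toruslinksoddtorsion}: the torus link $T(3,3n+1)$ has no $\Z_{p^r}$ torsion for odd $p$, so by the universal coefficient theorem $\dim_{\Q}H^{i,j}(T(3,3n+1);\Q) = \dim_{\Z_p}H^{i,j}(T(3,3n+1);\Z_p)$ for every bigrading $(i,j)$ and every odd prime $p$. Combining this with the displayed formula above gives
\[
\dim_{\Q}H^{i,j}(\Delta^{2n+1};\Q) = \dim_{\Z_p}H^{i,j}(\Delta^{2n+1};\Z_p)
\]
for every bigrading and every odd prime $p$.

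Finally, apply the universal coefficient theorem in the reverse direction. If $H^{i,j}(\Delta^{2n+1};\Z)$ contained a $\Z_{p^r}$ summand for some odd prime $p$ and some $r \geq 1$, then $\dim_{\Z_p}H^{i,j}(\Delta^{2n+1};\Z_p)$ would strictly exceed $\dim_{\Q}H^{i,j}(\Delta^{2n+1};\Q)$, contradicting the equality just established. Hence $H(\Delta^{2n+1})$ contains no odd torsion, completing the proof. The main obstacle, which has already been handled in Theorem~\ref{omega3comp}, was establishing the splitting via the long exact sequence; once that is in hand the present corollary is a short bookkeeping argument.
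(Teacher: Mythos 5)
Your proposal is correct and matches the paper's intent exactly: the paper states this as an immediate corollary of Theorem \ref{omega3comp} (with no written proof), and the evident argument is precisely yours — reduce to $n\geq 0$ by mirroring, use the splitting over $\Q$ and $\Z_p$ together with Turner's Theorem \ref{toruslinksoddtorsion} to equate $\dim_\Q$ and $\dim_{\Z_p}$ in every bigrading, and conclude via the universal coefficient theorem. No gaps.
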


\subsection{Even torsion in $\Omega_0,\Omega_1,\Omega_2,\Omega_3$}

In this subsection, we use Theorem \ref{GeneralTheorem2} to explicitly compute all torsion for links in $\Omega_0,\Omega_1,\Omega_2,$ and $\Omega_3$.
Benheddi \cite[Page 94]{benheddi2017khovanov} computed the reduced $\Z_2$-Khovanov homology of the torus links $T(3,q)$, and from those computations we can recover the unreduced $\Z_2$-Khovanov homology of the torus links $T(3,q)$, by using Equation \eqref{reducediso}, and letting $j=\delta+2i$. These computations encompass the closed $3$-braids in $\Omega_0, \Omega_1,$ and $\Omega_2$. We display the $\Q$-Khovanov homology and $\Z_2$-Khovanov homology of these links in the top three rows of Figure \ref{QversusZ2}. The $\Z_2$-Khovanov homology of the closure of braids in $\Omega_3$ is computed from the $\Z_2$-Khovanov homology of $T(3,3n+1)$, similarly to the proof of Theorem \ref{omega3comp}. 

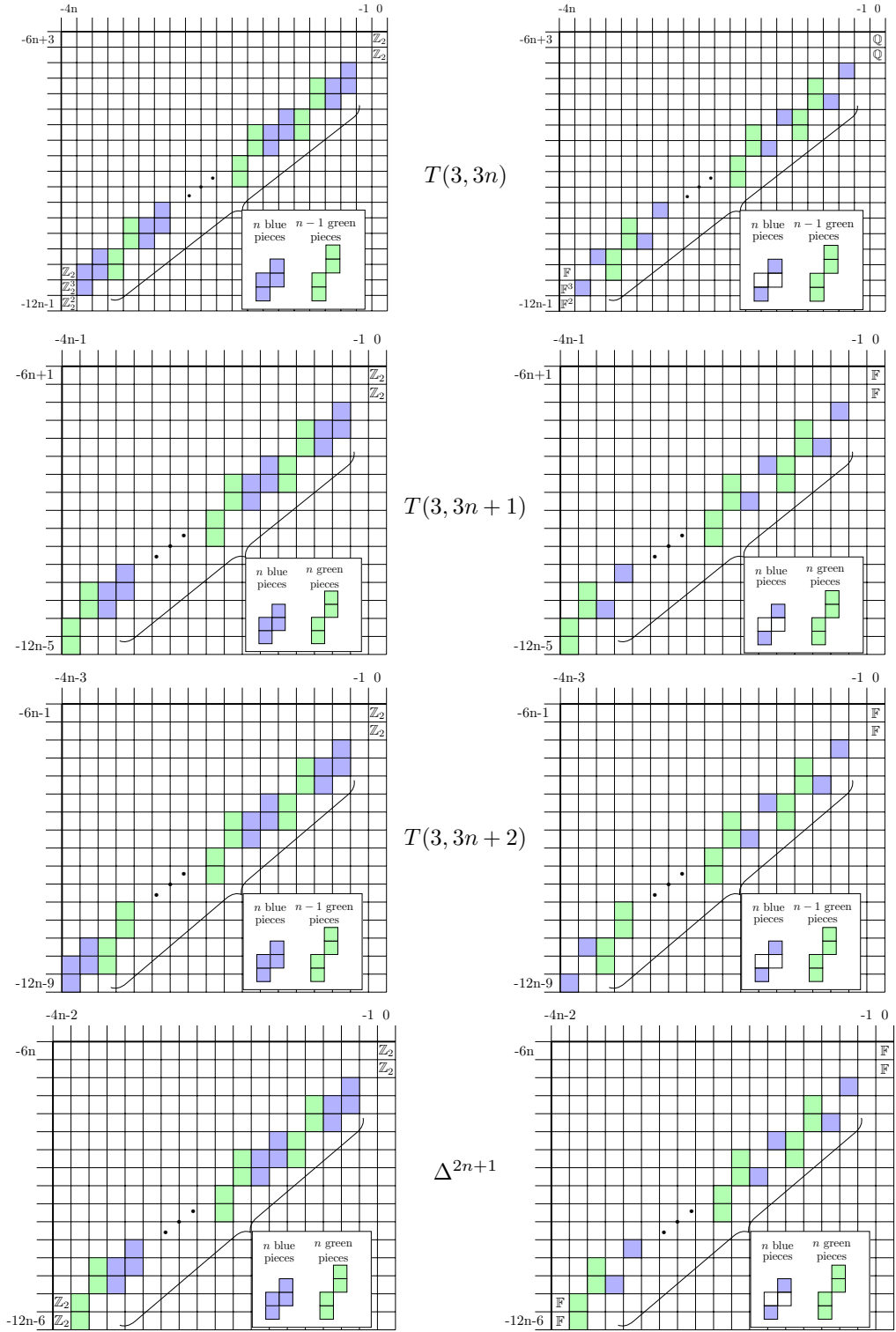
\begin{figure}
    \centering
    \begin{tikzpicture}
        \node[scale=.65*.93] at (3,2.5){\usebox\OddPowersOfDeltaModTwoColored};
        \node[scale=.65*.93] at (10.5,2.5){\usebox\OddPowersOfDeltaOverQColored};
        \node[scale=.65*.93] at (3,7.6){\usebox\TorusThreeNPlusTwoModTwoColored};
        \node[scale=.65*.93] at (10.5,7.6){\usebox\TorusThreeNPlusTwoOverQColored};
        \node[scale=.65*.93] at (3,12.7){\usebox\TorusThreeNPlusOneModTwoColored};
        \node[scale=.65*.93] at (10.5,12.7){\usebox\TorusThreeNPlusOneOverQColored};
        \node[scale=.56*.93] at (3.05,17.8){\usebox\TorusThreeNModTwoColored};
        \node[scale=.56*.93] at (10.55,17.8){\usebox\TorusThreeNOverQColored};
        \node at (7,2.5){$\Delta^{2n+1}$};
        \node at (7,7.5){$T(3,3n+2)$};
        \node at (7,12.5){$T(3,3n+1)$};
        \node at (7,17.5){$T(3,3n)$};
    \end{tikzpicture}
    \caption{For the families $T(3,3n)$, $T(3,3n+1)$, $T(3,3n+2)$, $\Delta^{2n+1}$ of links, we show in the left (resp. right) column, the Khovanov homology over $\Z_2$ (resp. $\F=\Q$ or $\Z_p$ where $p$ is an odd prime). Each colored box represents a single copy of $\Z_2$ (resp $\F$) which is killed in the Turner (resp. Lee) spectral sequence. }
    \label{QversusZ2}
    \label{oddpowerdeltatableF}
\end{figure}

\begin{thm}
For any $n \ge 0$, 
\[H(\Delta^{2n+1};\Z_2)\cong H(T(3,3n+1);\Z_2)\{-1\}\oplus H(U;\Z_2)[-4n-2]\{-12n-5\}.\]
\end{thm}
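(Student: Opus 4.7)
The plan is to imitate the proof of Theorem~\ref{omega3comp} essentially verbatim, replacing the Lee spectral sequence (which is unavailable over $\Z_2$) by the Turner spectral sequence so that the argument passes through in characteristic $2$. First I would apply the same decomposition $\Delta^{2n+1}=(\sigma_1\sigma_2)^{3n+1}\sigma_1$ and smooth the topmost $\sigma_1$, producing $D_0\simeq U$ and $D_1\simeq T(3,3n+1)$ with $c=n_-(D_0)-n_-(D)=-4n-2$. The long exact sequence (\ref{negativeLES}) with $\F$ replaced by $\Z_2$ reads
\[\xrightarrow{\delta_*} H^{i,j+1}(T(3,3n+1);\Z_2)\to H^{i,j}(D;\Z_2)\to H^{i+4n+2,j+12n+5}(U;\Z_2)\xrightarrow{\delta_*} H^{i+1,j+1}(T(3,3n+1);\Z_2),\]
and Benheddi's calculation together with Equation~\eqref{reducediso} shows that the $\Z_2$-Khovanov homology of $T(3,3n+1)$ has the same support as its $\Q$-Khovanov homology in the relevant range; in particular it vanishes for $i<-4n-1$ and is one-dimensional in bigradings $(-4n-1,-12n-5)$ and $(-4n-1,-12n-3)$.

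For $i\notin\{-4n-2,-4n-1\}$ both unknot terms vanish and the sequence collapses to an isomorphism $H^{i,j}(D;\Z_2)\cong H^{i,j+1}(T(3,3n+1);\Z_2)$ for every $j$. For $i\in\{-4n-2,-4n-1\}$ and $j\neq-12n-5\pm 1$ the same vanishing forces $H^{i,j}(D;\Z_2)=0$, so the entire problem reduces to analyzing the two four-term exact sequences
\[0\to H^{-4n-2,-12n-5\pm 1}(D;\Z_2)\to\Z_2\xrightarrow{\delta_*}\Z_2\to H^{-4n-1,-12n-5\pm 1}(D;\Z_2)\to 0,\]
where the middle $\Z_2$'s come from $H^{0,\pm 1}(U;\Z_2)$ and $H^{-4n-1,-12n-5\pm 1}(T(3,3n+1);\Z_2)$. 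Each of the four unknown groups is thus either $\Z_2$ or $0$, and it will suffice to show that $H^{-4n-2,-12n-5+1}(D;\Z_2)$ and $H^{-4n-2,-12n-5-1}(D;\Z_2)$ are each isomorphic to $\Z_2$.

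The key new step (replacing the Lee dimension count in the proof of Theorem~\ref{omega3comp}) is to bound $\dim_{\Z_2}H^{-4n-2,*}(D;\Z_2)$ from below using Bar-Natan homology via the Turner spectral sequence. The closure of $\Delta^{2n+1}$ has two components (coming from the $2$-cycle $(1\,3)$ in the induced permutation on three strands), and a direct crossing-by-crossing trace through $\sigma_1\sigma_2\sigma_1$ shows that each copy of $\Delta$ contributes exactly two negative inter-component crossings, yielding linking number $\lambda=-(2n+1)$ between the two components. Lemma~\ref{Bar-Natan dimension} then gives $\dim_{\Z_2}\BN^{-4n-2}(\Delta^{2n+1})'=2$ (coming from the subsets $E=\{1\}$ and $E=\{2\}$), and since the Turner spectral sequence has $E^1$-page equal to $H(\Delta^{2n+1};\Z_2)$ and converges to $\BN^*(\Delta^{2n+1})'$, we obtain $\dim_{\Z_2}H^{-4n-2,*}(D;\Z_2)\geq 2$. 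Combined with the upper bound of $2$ supplied by the two short exact sequences, both $H^{-4n-2,-12n-5\pm 1}(D;\Z_2)$ must be $\Z_2$; the short exact sequences then force $\delta_*=0$ in both cases, hence $H^{-4n-1,-12n-5\pm 1}(D;\Z_2)\cong\Z_2$ as well. The only mildly delicate step is the linking-number computation, which I expect to be the main (but entirely routine) obstacle.
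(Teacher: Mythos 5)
Your proposal is correct and follows essentially the same route as the paper's proof: run the long exact sequence from smoothing the top $\sigma_1$ of $\Delta^{2n+1}=(\sigma_1\sigma_2)^{3n+1}\sigma_1$ exactly as in Theorem \ref{omega3comp}, and replace the Lee-homology dimension count by $\dim_{\Z_2}\BN^{-4n-2}(\Delta^{2n+1})'=2$ from Lemma \ref{Bar-Natan dimension} together with convergence of the Turner spectral sequence to pin down the four ambiguous groups. Your explicit linking-number computation (two components with $lk=-(2n+1)$, so $2\,lk=-4n-2$) is correct and merely makes explicit what the paper leaves as "a straightforward application" of the lemma.
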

\begin{proof}
For homological gradings $-4n-1$ through $0$, the proof of this theorem is largely the same as the proof of Theorem \ref{omega3comp}. We focus on homological grading $-4n-2$. From (\ref{For0one}) and (\ref{For0two}) it follows that each of the groups \[H^{-4n-1,-12n-5\pm1}(D;\Z_2), \  H^{-4n-2,-12n-5\pm 1}(D;\Z_2)\] 
is isomorphic to either $\Z_2$ or the trivial group. We argue that each of these groups is isomorphic to $\Z_2$.
Using Lemma \ref{Bar-Natan dimension}, we find that $\dim_{\Z_2}(\BN^{-4n-2}(\Delta^{2n+1})^\prime) = 2$. Using Benheddi's calculation \cite{benheddi2017khovanov} of the $\Z_2$-Khovanov homology of $T(3,3n+1)$, shown in Figure \ref{QversusZ2}, the long exact sequence (\ref{LESOmega3}) gives \[H^{-4n-2,j}(\Delta^{2n+1};\Z_2) \cong H^{-4n-2,j+1}(T(3,3n+1);\Z_2) = 0\]
for $j \neq -12n-6, -12n-4$. Therefore, $\dim_{\Z_2} H^{-4n-2,*}(\Delta^{2n+1};\Z_2)\leq 2$. Since the Turner spectral sequence has $E^1$ page the $\Z_2$-Khovanov homology, and converges to Bar-Natan homology, 
\[\dim_{\Z_2} H^{-4n-2,*}(\Delta^{2n+1};\Z_2) \ge 2.\]
Therefore it follows that $H^{-4n-2,-12n-6}(D;\Z_2) \cong \Z_2$ and $H^{-4n-2,-12n-4}(D;\Z_2) \cong \Z_2$. Finally, the non-triviality of these two groups together with (\ref{For0one}) and (\ref{For0two}) imply that $H^{-4n-1,-12n-4}(D;\Z_2)\cong\Z_2$ and $H^{-4n-2,-12n-6}(D;\Z_2)\cong\Z_2$. 
\end{proof}

The Khovanov homology with $\Z_2$ and $\Q$ coefficients of the closure of $\Delta^{2n+1}$ is depicted in Figure \ref{QversusZ2}.
The computations of Khovanov homology with $\Q$ and $\Z_p$ coefficients for closed braids in $\Omega_0, \Omega_1, \Omega_2$, and $\Omega_3$ leads to the following application of Theorem \ref{GeneralTheorem2}.
\begin{thm}\label{TorusLinksOnly2Torsion}\label{mainthm2}
All torsion in the Khovanov homology of a closed 3-braid $L$ of type $\Omega_0,\Omega_1,\Omega_2$ or $\Omega_3$ is $\Z_2$ torsion, that is,  $H(L)\cong\Z^k\oplus \Z_2^\ell$ for some $k,\ell\geq 0$.
\end{thm}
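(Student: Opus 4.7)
The plan is to treat the four classes in turn, after first reducing to $n \geq 0$ via the mirror and $\phi$-conjugation identifications in the preceding subsection and ruling out odd torsion via Theorem~\ref{toruslinksoddtorsion} (for $\Omega_0,\Omega_1,\Omega_2$) and Corollary~\ref{NoOddTorsionOmega3} (for $\Omega_3$). What remains is to show that every $2$-primary torsion summand has order exactly $2$. The case $\Omega_3$ reduces to the $\Omega_1$ case via Theorem~\ref{omega3comp} and its $\Z_2$-coefficient analogue, which present $H(\Delta^{2n+1})$ as a shifted copy of $H(T(3,3n+1))$ plus a torsion-free unknot summand; so it suffices to handle $\Omega_0,\Omega_1,\Omega_2$.

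For each of these three families the explicit $\Q$- and $\Z_2$-Khovanov homology is known (from Turner, Benheddi, and the computations earlier in this section, summarized in Figure~\ref{QversusZ2}). Universal coefficients together with the absence of odd torsion pin down the integer Khovanov homology up to the choice of $2$-primary parts, and it decomposes visually as a sum of ``knight's-move'' pieces: each ``green'' piece is a torsion-free block of four $\Z$'s, and each ``blue'' piece is a $\Z,\Z_{2^r},\Z$ staircase on two adjacent diagonals $s\pm 1$ spanning two adjacent homological gradings. I would apply Theorem~\ref{GeneralTheorem2} to each blue piece in turn, taking $i_1$ to be its first (torsion-free $\Z$) column and $i_2$ its second. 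Conditions~(1), (2), and~(3) are immediate from the explicit data combined with Theorem~\ref{toruslinksoddtorsion}, and condition~(4) holds whenever the preceding column sits inside a green piece, because the green entries' polynomial gradings lie strictly above the threshold $2i_1 - s - 3$. This handles every blue piece for $T(3,3n+1)$ and $T(3,3n+2)$, and every blue piece except the first for $T(3,3n)$.

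The hard part will be the very first blue piece of $T(3,3n)$, where $i_1 - 1 = -4n$ is the ``fat corner'' $\Z^2\oplus\Z^3\oplus\Z$ and condition~(4) of Theorem~\ref{GeneralTheorem2} fails because the fat corner has a summand at polynomial grading $-12n-1$ sitting exactly at the threshold. For this block I would fall back on Theorem~\ref{GeneralTheorem} and verify its weaker condition that all Lee and Turner differentials out of $i_1 - 1 = -4n$ vanish on every page. The Lee differentials vanish for grading reasons, since their $(1,4r)$ bidegree shift carries the support at hom $-4n$ outside the support of the adjacent column. The Turner differentials vanish by a dimension count: the minimum homological grading receives no incoming differentials, while Lemma~\ref{Bar-Natan dimension} gives $\dim_{\Z_2}\BN^{-4n}(T(3,3n))' = 6 = \dim_{\Z_2} H^{-4n,*}(T(3,3n);\Z_2)$, forcing every outgoing Turner differential to be zero. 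Combining the three steps yields $H(L)\cong \Z^k\oplus\Z_2^\ell$ in each of $\Omega_0,\Omega_1,\Omega_2,\Omega_3$.
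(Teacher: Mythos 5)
Your argument is correct and follows essentially the same route as the paper's proof: rule out odd torsion with Theorem~\ref{toruslinksoddtorsion} and Corollary~\ref{NoOddTorsionOmega3}, apply Theorem~\ref{GeneralTheorem2} to the blue knight-move pieces in Figure~\ref{QversusZ2} where condition~(4) holds, and fall back on Theorem~\ref{GeneralTheorem} for the exceptional first blue piece of $T(3,3n)$. The paper leaves the verification of Theorem~\ref{GeneralTheorem}'s Turner/Lee vanishing hypothesis for that piece to ``the concerned reader''; you supply it explicitly via the dimension count $\dim_{\Z_2}\BN^{-4n}(T(3,3n))'=6=\dim_{\Z_2}H^{-4n,*}(T(3,3n);\Z_2)$ combined with the fact that $-4n$ is the minimal homological grading (so no incoming differentials), which is a clean way to see it. One small caution on your reduction of $\Omega_3$ to $\Omega_1$: Theorem~\ref{omega3comp} and its $\Z_2$-analogue give the decomposition $H(\Delta^{2n+1};\F)\cong H(T(3,3n+1);\F)\{-1\}\oplus H(U;\F)[\cdots]\{\cdots\}$ only over field coefficients; the integral decomposition is Corollary~\ref{IntegralHomologyOfDelta2nPlusOne}, which is deduced \emph{from} the main theorem and would be circular to invoke here. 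What you actually need, and what holds, is that the $\Q$-, $\Z_p$-, and $\Z_2$-computations show the blue/green region structure of $\Delta^{2n+1}$ is a shift of that of $T(3,3n+1)$ together with a torsion-free unknot column, so that the same applications of Theorems~\ref{GeneralTheorem2}/\ref{GeneralTheorem} go through; phrased that way the reduction is sound. (The paper sidesteps this by simply reading all four families off Figure~\ref{QversusZ2} directly.)
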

\begin{proof}
Let $L$ be a closed braid in $\Omega_0,\Omega_1,\Omega_2$ or $\Omega_3$. Theorem \ref{toruslinksoddtorsion} and Corollary \ref{NoOddTorsionOmega3} imply $L$ contains no $\Z_{p^r}$ torsion for any odd prime $p$. Therefore $L$ satisfies condition (2) of Theorem \ref{GeneralTheorem2} on any thin region. 
Figure \ref{QversusZ2} shows that all torsion in $H(L)$ occurs in the thin ``blue'' regions, and moreover, no torsion is supported in the initial homological grading of any thin region. Thus each thin ``blue'' region satisfies conditions 1 and 3 of Theorem \ref{GeneralTheorem2}. Finally, if we look at any one of the thin ``blue" regions in Figure \ref{QversusZ2}, we see that condition (4) is satisfied in the preceding homological grading with one exception. Figure \ref{QversusZ2} shows that in $H(T(3,3n);\Z_2)$ the first blue piece does not satisfy condition (4) of Theorem \ref{GeneralTheorem2}. Recall however that these $\Z_2$ summands to the left all survive to the infinity page of the Turner and Lee spectral sequences, so for this one piece, the concerned reader can apply the stronger Theorem \ref{GeneralTheorem}. 
We conclude that all torsion in $H(L)$ is $\Z_2$ torsion.
\end{proof}
As corollaries, we obtain the integral Khovanov homology of closed $3$-braids in $\Omega_0, \Omega_1, \Omega_2$, and $\Omega_3$.
\begin{cor}\label{IntegralHomologyOfDelta2nPlusOne}
For any $n \ge 0$, $H(\Delta^{2n+1})\cong H(T(3,3n+1))\{-1\}\oplus H(U)[-4n-2]\{-12n-5\}.$
\end{cor}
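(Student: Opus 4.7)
The plan is to upgrade the field-coefficient isomorphisms (Theorem \ref{omega3comp} over $\Q$ and $\Z_p$ for $p$ odd, and its $\Z_2$ analog proven immediately before the corollary) to an integral isomorphism, using Theorem \ref{mainthm2} to ensure that both sides of the claimed decomposition contain only $\Z_2$ torsion. Set $L = \Delta^{2n+1}$ and $M = H(T(3,3n+1))\{-1\} \oplus H(U)[-4n-2]\{-12n-5\}$. Then $H(L)$ has only $\Z_2$ torsion by Theorem \ref{mainthm2} applied to $\Omega_3$, and $M$ has only $\Z_2$ torsion because $H(T(3,3n+1))$ does (Theorem \ref{mainthm2} applied to $\Omega_1$) and $H(U) \cong \Z\{1\} \oplus \Z\{-1\}$ is torsion-free.

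Write $H^{i,j}(L) \cong \Z^{a_{i,j}} \oplus \Z_2^{b_{i,j}}$ and $M^{i,j} \cong \Z^{a'_{i,j}} \oplus \Z_2^{b'_{i,j}}$ for nonnegative integers that vanish outside a finite range of bigradings. The universal coefficient theorem with $\Q$ coefficients recovers the free ranks $a_{i,j}$ from the $\Q$-Khovanov homology, and Theorem \ref{omega3comp} then gives $a_{i,j} = a'_{i,j}$ for every $(i,j)$. The universal coefficient theorem with $\Z_2$ coefficients yields
\[\dim_{\Z_2} H^{i,j}(L;\Z_2) \;=\; a_{i,j} + b_{i,j} + b_{i+1,j},\]
with the analogous formula for the $\Z_2$-Khovanov homology of each summand of $M$ (using that $H(U)$ is torsion-free). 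Combining these formulas with the $\Z_2$-level decomposition of $H(L;\Z_2)$ proven above, and subtracting the identity $a_{i,j} = a'_{i,j}$, leaves the single constraint
\[b_{i,j} + b_{i+1,j} \;=\; b'_{i,j} + b'_{i+1,j} \qquad \text{for every } (i,j).\]

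Since $b_{i,j}$ and $b'_{i,j}$ both vanish for $i$ larger than some finite bound, a downward induction on $i$ from that bound propagates $b_{i,j} = b'_{i,j}$ throughout all bigradings, and combined with $a_{i,j} = a'_{i,j}$ this yields the desired isomorphism $H(\Delta^{2n+1}) \cong M$. The only nontrivial input beyond routine UCT bookkeeping is Theorem \ref{mainthm2}: it is what forces both sides into the simple form $\Z^k \oplus \Z_2^\ell$ and thereby allows the field-coefficient data to pin down the integral groups uniquely.
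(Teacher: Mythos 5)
Your proposal is correct and is essentially the argument the paper leaves implicit: the corollary is drawn directly from the field-coefficient decompositions (Theorem \ref{omega3comp} and its $\Z_2$ analogue) together with Theorem \ref{mainthm2}, exactly as you do. Your UCT bookkeeping with the downward induction on $b_{i,j}+b_{i+1,j}=b'_{i,j}+b'_{i+1,j}$ is the standard and correct way to make that deduction explicit.
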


\begin{cor}\label{IntegralHomologyCalculations}
The integral Khovanov homology of links in classes $\Omega_0, \Omega_1, \Omega_2$, and $\Omega_3$ are given in Figures \ref{torusthreeNoverZtable}, \ref{torusthreeNPlusOneoverZtable}, \ref{torusthreeNPlusTwooverZtable}, and \ref{oddpowerdeltatableZ}.
\end{cor}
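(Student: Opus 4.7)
The plan is to extract the integral Khovanov homology from the field computations already obtained, using Theorem \ref{mainthm2} to restrict the possible torsion to $\Z_2$ summands only. The universal coefficient theorem gives, for each prime $p$ and each bigrading $(i,j)$,
\[ \dim_{\Z_p} H^{i,j}(L;\Z_p) = \rk H^{i,j}(L) + t^p_{i,j} + t^p_{i+1,j}, \]
where $t^p_{i,j}$ denotes the number of $\Z_{p^r}$ (for any $r\geq 1$) summands in $H^{i,j}(L)$. By Theorem \ref{mainthm2}, $t^p_{i,j}=0$ for every odd prime $p$, and every $\Z_2$ summand has order exactly $2$, so $t^2_{i,j}$ equals the number of $\Z_2$ summands in $H^{i,j}(L)$.

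First I would record that $\rk H^{i,j}(L)=\dim_\Q H^{i,j}(L;\Q)$ for all $(i,j)$, and these ranks are read off from the right-hand column of Figure \ref{QversusZ2} (obtained from Turner's work and, in the $\Omega_3$ case, from Theorem \ref{omega3comp}). Next, from the left-hand column of Figure \ref{QversusZ2} (Benheddi's reduced $\Z_2$ computations together with Shumakovitch's doubling isomorphism \eqref{reducediso}, and from the $\Omega_3$ calculation extended above), I have $\dim_{\Z_2} H^{i,j}(L;\Z_2)$ for all $(i,j)$. Subtracting the rank from the $\Z_2$-dimension yields the quantity $t^2_{i,j}+t^2_{i+1,j}$ in every bigrading.

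To pin down $t^2_{i,j}$ individually, I would induct downward on $i$, starting from the maximum homological grading, where $t^2_{i+1,j}=0$ trivially, and solving the recursion
\[ t^2_{i,j} = \bigl(\dim_{\Z_2} H^{i,j}(L;\Z_2) - \dim_{\Q} H^{i,j}(L;\Q)\bigr) - t^2_{i+1,j}. \]
Since the ``thin blue'' regions in Figure \ref{QversusZ2} are precisely where the $\Z_2$-dimension exceeds the $\Q$-dimension, the recursion places exactly one $\Z_2$ summand on the bottom diagonal of each ``blue piece,'' matching the pictorial patterns encoded in the boxes $\usebox\bluepieceoverZ$ and $\usebox\greenpieceoverZ$.

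The main (and essentially only) obstacle is bookkeeping: I need to confirm that the resulting $t^2_{i,j}$ values are nonnegative integers and that they match the explicit diagrams in Figures \ref{torusthreeNoverZtable}--\ref{oddpowerdeltatableZ}, case by case across $\Omega_0,\Omega_1,\Omega_2,\Omega_3$. For $\Omega_3$, I would additionally invoke Corollary \ref{IntegralHomologyOfDelta2nPlusOne}, which reduces the $\Omega_3$ calculation to that of $T(3,3n+1)$ shifted by an unknot summand, so that once the $\Omega_1$ (torus knot) table is verified, the $\Omega_3$ table follows for free. Finally, to pass from $n\geq 0$ to arbitrary $n\in\Z$ in Murasugi's classification, I would apply the mirror identities \eqref{simplifyone}--\eqref{simplifythree}, combined with Khovanov's $\Z_{p^r}$-preserving mirror formula, so the $n<0$ tables are obtained by the appropriate homological reflection of the $n\geq 0$ tables.
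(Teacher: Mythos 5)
Your argument is correct and matches the reasoning the paper leaves implicit: the $\Q$- and $\Z_2$-Khovanov homology tables (Figure \ref{QversusZ2}) plus Theorem \ref{mainthm2} (all torsion is $\Z_2$) determine the integral homology uniquely via the universal coefficient theorem, with the downward induction on homological grading pinning down $t^2_{i,j}$ in each bigrading, and Corollary \ref{IntegralHomologyOfDelta2nPlusOne} and the mirror identities handling $\Omega_3$ and $n<0$ respectively. This is precisely the intended deduction.
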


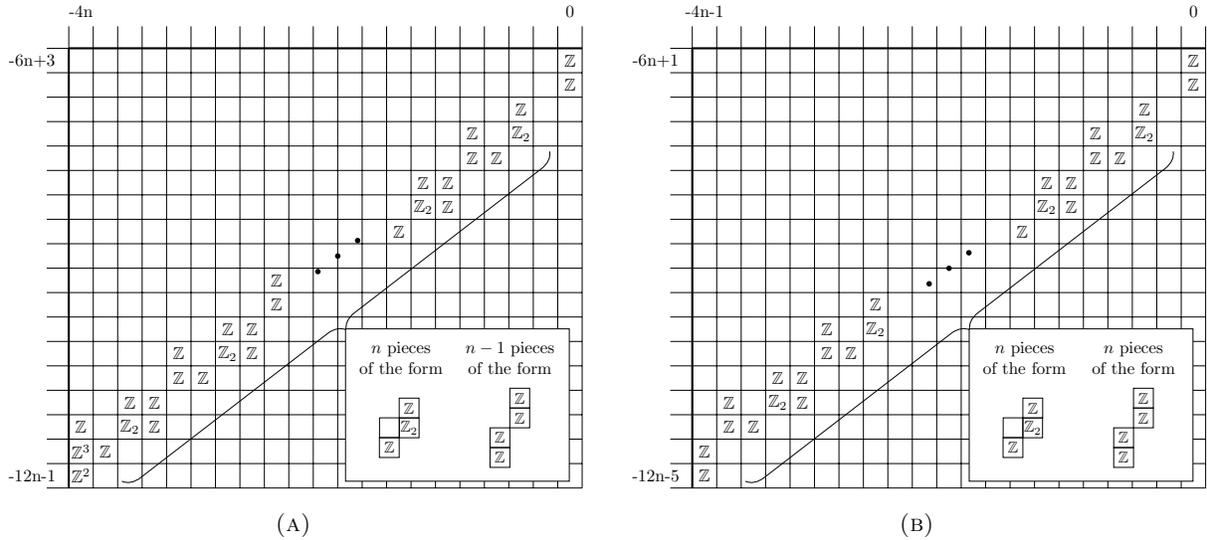
\begin{figure}[h!]
\begin{subfigure}{0.49\textwidth}
\centering
\begin{tikzpicture}
\node[scale=.65] at (0,0){\usebox\TorusThreeNOverZ};
\end{tikzpicture}
\caption{}
\label{torusthreeNoverZtable}
\end{subfigure}
\begin{subfigure}{0.5\textwidth}
\centering
\begin{tikzpicture}
\node[scale=.65] at (0,0){\usebox\TorusThreeNPlusOneOverZ};
\end{tikzpicture}
\caption{}
\label{torusthreeNPlusOneoverZtable}
\end{subfigure}
\caption{In (A) we have the integral Khovanov homology of $T(3,3n)$. In (B) we have the integral Khovanov homology of $T(3,3n+1)$.}
\end{figure}

\begin{figure}[h!]
\begin{subfigure}{0.49\textwidth}
\centering
\begin{tikzpicture}
\node[scale=.65] at (0,0){\usebox\TorusThreeNPlusTwoOverZ};
\end{tikzpicture}
\caption{}
\label{torusthreeNPlusTwooverZtable}
\end{subfigure}
\begin{subfigure}{0.5\textwidth}
\centering
\begin{tikzpicture}
\node[scale=.65] at (0,0){\usebox\DeltaTwoNPlusOneZ};
\end{tikzpicture}
\caption{}
\label{oddpowerdeltatableZ}
\end{subfigure}
\caption{A) The integral Khovanov homology of $T(3,3n+2)$. (B) The integral Khovanov homology of the braid closure of $\Delta^{2n+1}$.}
\end{figure}
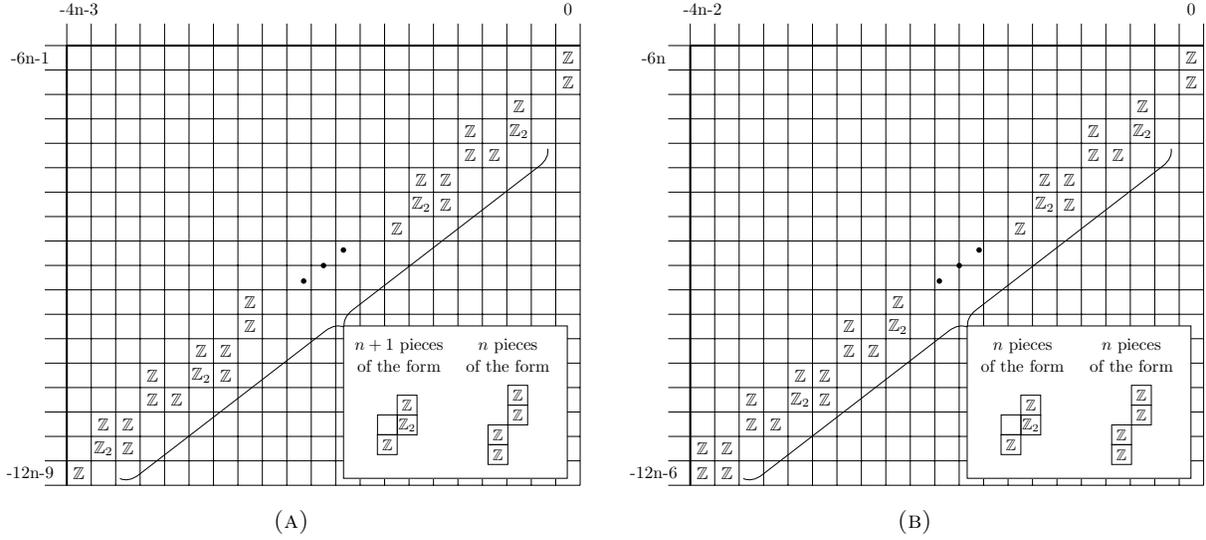

\subsection{Closed $3$-braids in $\Omega_4$, $\Omega_5$, and $\Omega_6$}
\label{limitations}

One goal of this project is to prove part (1) of Conjecture \ref{psbraid}, that closed 3-braids have only $\Z_2$ torsion in Khovanov homology. Based on Murasugi's classification shown in Theorem \ref{murasugi1}, we have confirmed this result for links in the classes $\Omega_i$ for $0\leq i\leq 3$, leaving only the classes $\Omega_4$, $\Omega_5,$ and $\Omega_6$. We now point out examples from these classes for which Theorem \ref{GeneralTheorem} is insufficient. In a future paper, we plan to use these examples as a guide to come up with a stronger version of Theorem \ref{GeneralTheorem} which can be used to deal with these remaining cases, perhaps by showing a relationship between higher order Bockstein and Turner differentials, as suggested by Shumakovitch \cite{shumakovitch2018torsion}. 

\begin{figure}[h!]
\begin{subfigure}{0.24\textwidth}
\centering
\begin{tikzpicture}
\node[scale=.65] at (0,0){\usebox\OmegaFourCounterQ};
\end{tikzpicture}
\caption{}
\label{OmegafourcounterQ}
\end{subfigure}
\begin{subfigure}{0.24\textwidth}
\centering
\begin{tikzpicture}
\node[scale=.65] at (0,0){\usebox\OmegaFourCounterZTwo};
\end{tikzpicture}
\caption{}
\label{OmegafourcounterZTwo}
\end{subfigure}
\begin{subfigure}{0.24\textwidth}
\centering
\begin{tikzpicture}
\node[scale=.65] at (0,0){\usebox\OmegaFiveCounterQ};
\end{tikzpicture}
\caption{}
\label{OmegafivecounterQ}
\end{subfigure}
\begin{subfigure}{0.24\textwidth}
\centering
\begin{tikzpicture}
\node[scale=.65] at (0,0){\usebox\OmegaFiveCounterZTwo};
\end{tikzpicture}
\caption{}
\label{OmegafivecounterZTwo}
\end{subfigure}
\caption{In (A) we have the rational Khovanov homology of the closure of the 3-braid $\Delta^2\sigma_1^{-5}\in\Omega_4$. In (B) we have the mod 2 Khovanov homology of the closure of the 3-braid $\Delta^2\sigma_1^{-5}\in\Omega_4$. Theorem \ref{GeneralTheorem} can not be applied here due to the homology being supported on 3 diagonals in homological grading 0. In (C) we have the rational Khovanov homology of the closure of the 3-braid $\Delta^2\sigma_2\in\Omega_5$. In (D) we have mod 2 Khovanov homology of the closure of the 3-braid $\Delta^2\sigma_2\in\Omega_5$. Again, Theorem \ref{GeneralTheorem} can not be applied due to Khovanov homology being supported on 3 diagonals in homological grading -4.} 
\end{figure}
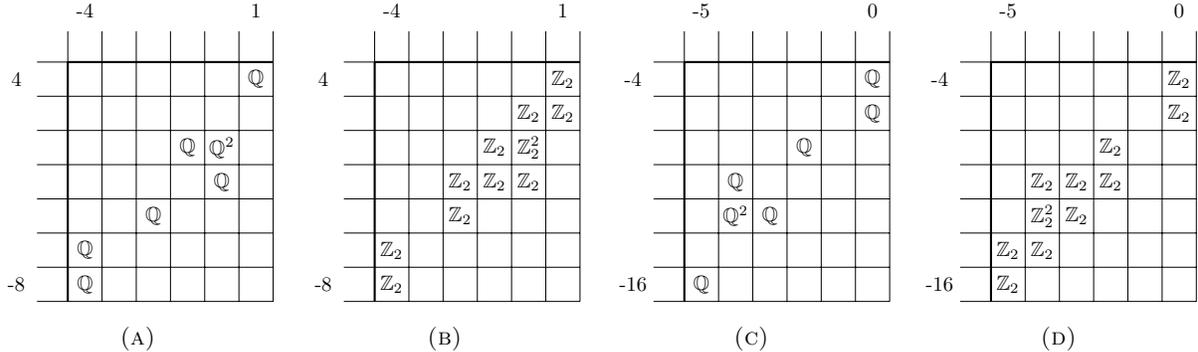

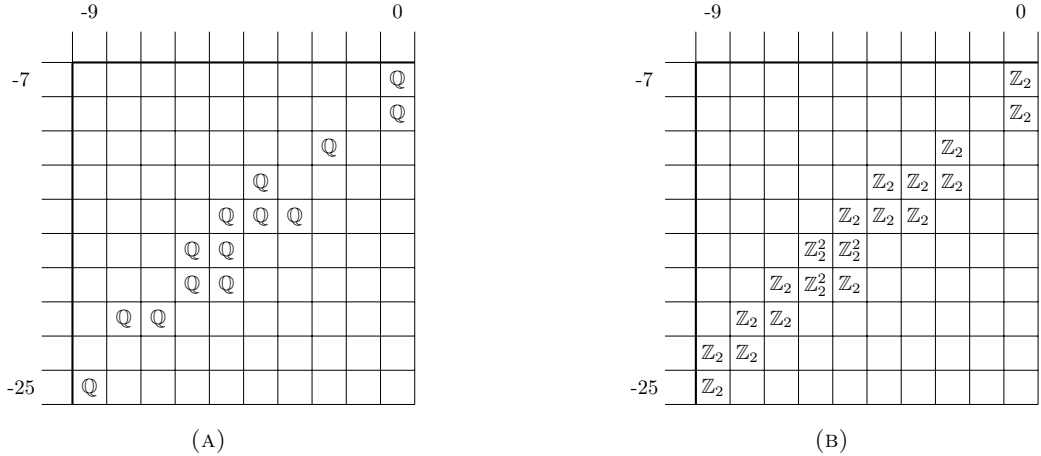
\begin{figure}[h!]
\begin{subfigure}{0.49\textwidth}
\centering
\begin{tikzpicture}
\node[scale=.65] at (0,0){\usebox\OmegaSixCounterQ};
\end{tikzpicture}
\caption{}
\label{OmegasixcounterQ}
\end{subfigure}
\begin{subfigure}{0.5\textwidth}
\centering
\begin{tikzpicture}
\node[scale=.65] at (0,0){\usebox\OmegaSixCounterZTwo};
\end{tikzpicture}
\caption{}
\label{OmegasixcounterZTwo}
\end{subfigure}
\caption{In (A) we have the rational Khovanov homology of the closure of the 3-braid $\Delta^4\sigma_1^{-2}\sigma_2\sigma_1^{-1}\in\Omega_6$. In (B) we have the mod 2 Khovanov homology of the closure of the 3-braid $\Delta^4\sigma_1^{-2}\sigma_2\sigma_1^{-1}\in\Omega_6$. Theorem \ref{GeneralTheorem} cannot be applied in this case due to the homology being supported on 3 diagonals in homological grading -5. }
\end{figure}

\FloatBarrier

\bibliographystyle{abbrv}
\bibliography{bib}


\end{document}